\newtheorem{theorem}{Theorem}[section]
\newtheorem{corollary}[theorem]{Corollary}
\newtheorem{condition}[theorem]{Condition}
\newtheorem{lemma}[theorem]{Lemma}
\newtheorem{proposition}[theorem]{Proposition}
\newtheorem{claim}[theorem]{Claim}
\newtheorem{definition}[theorem]{Definition}
\let\olddefinition\definition
\renewcommand{\definition}{\olddefinition\normalfont}
\newtheorem{remark}[theorem]{Remark}
\let\oldremark\remark
\renewcommand{\remark}{\oldremark\normalfont}
\renewcommand\ge\geqslant
\renewcommand\geq\geqslant
\renewcommand\le\leqslant
\renewcommand\leq\leqslant
\newcommand{\ZZ}{\mathbb{Z}}
\newcommand{\NN}{{\ensuremath{\mathbb{N}}}}
\newcommand{\RR}{{\ensuremath{\mathbb{R}}}}
\newcommand{\ma}{{\ensuremath{\mathfrak{a}}}} 
\newcommand{\mb}{{\ensuremath{\mathfrak{b}}}}
\newcommand{\md}{{\ensuremath{\mathfrak{d}}}}
\newcommand{\ms}{{\ensuremath{\mathfrak{s}}}}
\newcommand{\cb}{{\ensuremath{\mathbf{c}}}}
\newcommand{\bg}{{\ensuremath{\mathbf{g}}}} 
\newcommand{\bh}{{\ensuremath{\mathbf{h}}}} 
\newcommand{\bq}{{\ensuremath{\mathbf{q}}}}
\newcommand{\bs}{{\ensuremath{\mathbf{s}}}} 
\newcommand{\cS}{{\ensuremath{\mathcal{S}}}}
\newcommand{\cM}{{\ensuremath{\mathcal{M}}}}
\newcommand{\cN}{{\ensuremath{\mathcal{N}}}}
\newcommand{\cO}{{\ensuremath{\mathcal{O}}}}
\newcommand{\cR}{{\ensuremath{\mathcal{R}}}}
\newcommand{\cK}{{\ensuremath{\mathcal{K}}}}
\newcommand{\cD}{{\ensuremath{\mathcal{D}}}}
\newcommand{\cE}{{\ensuremath{\mathcal{E}}}} 
\newcommand{\cF}{{\ensuremath{\mathcal{F}}}}
\newcommand{\cH}{{\ensuremath{\mathcal{H}}}} 
\newcommand{\cV}{{\ensuremath{\mathcal{V}}}}
\newcommand{\Syz}{\mathrm{Syz}}
\def\ub{\mathbf{u}}
\def\cross{\mathfrak{c}}
\newcommand{\fc}[1]{\mbox{\em #1}}
\newcommand{\ead}[1]{\texttt{\em #1}}
\def\sep{\!\!; }
\newenvironment{keyword}{\textbf{Keywords:}}{}
\title[]{Dimension and bases for geometrically continuous splines on surfaces of arbitrary topology}
\author[B. Mourrain]{Bernard Mourrain}
\author[R. Vidunas]{Raimundas Vidunas}
\author[N. Villamizar]{Nelly Villamizar}
\address{Bernard Mourrain , Inria Sophia Antipolis M\'editerran\'ee, Sophia Antipolis, France, \ead{Bernard.Mourrain@inria.fr}}
\address{Raimundas Vidunas , University of Tokyo, Tokyo, Japan, \newline\ead{rvidunas@gmail.com}}
\address{Nelly Villamizar , RICAM, Austrian Academy of Sciences, Linz, Austria, \newline\ead{nelly.villamizar@oeaw.ac.at}}
\begin{document}
\maketitle

\begin{abstract}
  We analyze the space of geometrically continuous piecewise
  polynomial functions, or splines, for rectangular and triangular patches 
  with arbitrary
  topology and general rational transition maps.  To define these spaces
  of $G^{1}$ spline functions, we introduce the concept of topological
  surface with gluing data attached to the edges shared by faces. The
  framework does not require manifold constructions and is general
  enough to allow non-orientable surfaces.  We describe compatibility
  conditions on the transition maps so that the space of
  differentiable functions is ample and show that these conditions are
  necessary and sufficient to construct ample spline spaces.  We
  determine the dimension of the space of $G^{1}$ spline functions
  which are of degree $\le k$ on triangular pieces and of bi-degree
  $\le (k,k)$ on rectangular pieces, for $k$ big enough.  A
  separability property on the edges is involved to obtain the
  dimension formula.  An explicit construction of basis functions
  attached respectively to vertices, edges and faces is proposed; 
  examples of bases of $G^{1}$ splines of small degree for topological
  surfaces with boundary and without boundary are detailed.
\end{abstract}

\begin{keyword}
	geometrically continuous splines \sep 
	dimension and bases of spline spaces \sep 
	gluing data \sep 
	polygonal patches \sep 
	surfaces of arbitrary topology
%% keywords here, in the form: keyword \sep keyword

%% PACS codes here, in the form: \PACS code \sep code

%% MSC codes here, in the form: \MSC code \sep code
%% or \MSC[2008] code \sep code (2000 is the default)

\end{keyword}

%% \linenumbers
%% main text

\section{Introduction}

The accurate and efficient representation of shapes is a major
challenge in geometric modeling.
To achieve high order accuracy in the representation of curves,
surfaces or functions, piecewise polynomials models are usually
employed.
Parametric models with prescribed regularity properties are nowadays  
commonly used in Computer Aided Geometric Design  (CAGD) to 
address these problems.
They involve so-called spline functions, which are piecewise
polynomial functions on intervals of $\RR$ with continuity
and differentiability constraints at some nodes.
Extensions of these functions to higher dimension 
are usually done by taking tensor product spline basis functions.
Curves, surfaces or volumes are represented as the image of parametric
functions expressed in terms of spline basis functions.
For instance,  surface patches are described as the image of a
piecewise polynomial (or rational) map from a rectangular domain of
$\RR^2$ to $\RR^n$.
But to represent objects with complex topology, such maps on 
rectangular parameter domains are not sufficient.
One solution which is commonly used in Computer-Aided Design (CAD) 
is to trim the B-spline rectangular patches and to ``stitch'' 
together the trimmed pieces to create the complete shape representation.
This results in complex models, which are not simple to use and to
modify, since structural rigidity conditions cannot easily be imposed along
the trimming curve between two trimmed patches.

To allow flexibility in the representation of shapes with complex
topology, another technique called geometric continuity has been studied. 
Rectangular parametric surface patches are glued along their common boundary, 
with continuity constraints on the tangent planes (or on higher osculating
spaces). In this way, smooth surfaces can be generated from
quadrilateral meshes by gluing several simple parametric
surfaces, forming surfaces with the expected smoothness property along
the edges.

This approach builds on the theory on differential manifolds, in works
such as \cite{derose}, \cite{Hahn87}, \cite{Gregory89}.
The idea of using transition maps or reparameterizations in connection with building
smooth surfaces had been used for instance by DeRose \cite{derose} in
CAGD, who gave one of the first general definitions of splines based
on fixing a parametrization.

Since these initial developments, several works focused on the construction of such $G^{1}$ surfaces
\cite{Peters:1994},
\cite{Loop:1994},
\cite{Reif:1995},
\cite{Prautzsch:1997},
\cite{Catmull:1998},
\cite{Ying:2004},
\cite{Gu:2005},
\cite{He:2006},
\cite{Gu:2008},
\cite{Vecchia:2008},
\cite{Tosun:2011},
\cite{flex},
\ldots
with polynomial, piecewise polynomial, rational or special functions
and on their use in geometric modeling applications such as surface
fitting or surface reconstruction
\cite{Eck:1996},
\cite{Shi:2004},
\cite{Lin:2007},
\ldots

The problem of investigating the minimal degree of polynomial pieces has
also been considered \cite{Peters:2002}. Other research investigates
the construction of adapted rational transition maps for a given
topological structure \cite{beccari}.  We refer to
\cite{PetersHandbook} for a review of these constructions.
Constraints that the transition maps must satisfy in order to define
regular spline spaces have also been identified \cite{Peters2010}.
But it has not yet been proved that these constraints are sufficient
for the constructions.

The use of $G^{k}$ spline functions to approximate functions over
computational domains with arbitrary topology received recently a new
attention for applications in isogeometric analysis.  In this
context, describing the space of functions, its dimension and adapted
bases is of particular importance.
A family of bi-cubic spline functions was recently introduced by Wu et
al \cite{meng} for isogeometric applications, where constant
transition maps are used, which induce singular spline basis functions at
extraordinary vertices.
Multi-patch representations of computational domains are also used in 
\cite{adaptively}, with constant transition maps at the shared edges 
of rectangular faces, using an identification of Locally Refined spline 
basis functions. 
In \cite{kapl}, $G^{k}$ continuous splines are described and the $G^{1}$
condition is transformed into a linear system of relations between
the control coefficients. The case of two rectangular patches, which
share an edge is analyzed experimentally.
In \cite{Bercovier}, the space of $G^{1}$ splines of bi-degree $\ge 4$
for rectangular decompositions of planar domains is analyzed.
Minimal Determining
Sets of points are studied, providing dimension formulae and dual basis
for $G^{1}$ spline functions over planar rectangular meshes with
linear gluing transition maps. 

Our objective is to analyze the space of $G^{1}$ spline functions
for rectangular and triangular patches with arbitrary topology
and general rational transition maps. We are interested in determining the
dimension of the space of $G^{1}$ spline functions which are of degree $\le k$ on
triangular pieces and of bi-degree $\le (k,k)$ on rectangular pieces.
To define the space  of $G^{1}$ spline functions, we introduce the
concept of topological surface with gluing data attached to the edges
shared by the faces. The framework does not require manifold constructions
and is general enough to allow non-orientable surfaces.
We describe compatibility conditions on the transition maps so that
the space of differentiable functions is ample and show that these conditions are necessary
and sufficient to construct ample spline spaces.
A separability property is involved to obtain a dimension formula of
the $G^{1}$ spline spaces of degree $\le k$ on such topological
surfaces, for $k$ big enough.
This leads to an explicit construction of basis functions attached
respectively to vertices, edges and faces.

For the presentation of these results, we structure the paper as
follows. The next section introduces the notion of topological surface
$\cM$, differentiable functions on $\cM$ and constraints on the
transition maps to have an ample space of differentiable functions.
Section 3 deals with the space of spline functions which are piecewise
polynomial and differentiable on $\cM$.  Section 4 analyzes the gluing
conditions along an edge.  Section 5 analyzes the gluing condition
around a vertex.  In Section 6, we give the dimension formula for the
space of spline functions of degree $\le k$ over a topological surface
$\cM$ and describe explicit basis constructions.  Finally, in Section
7, we detail an example with boundary edges and another one with no
boundary edges.  We also provide an appendix with an algorithmic
description of the basis construction.

\section{Differentiable functions on a topological surface}

Typically in CAGD, parametric patches are glued into surfaces by splines
(i.e., polynomial maps) from polygons in $\RR^2$. 
The simplest $C^r$ construction is with the polygons in $\RR^2$
situated next to each other, so that $C^r$ continuity across patch edges
comes from $C^r$ continuity of the coordinate functions across the polygon edges.
This is called {\em parametric continuity}. A more general construction to generate 
a $C^r$ surface from polygonal patches is called {\em geometric continuity} 
\cite{derose}, \cite{PetersHandbook}.
Inspired by differential geometry, attempts have been made 
\cite{Hahn87}, \cite{raimundas}, \cite{VidunasAMS} to define 
{\em geometrically continuous $G^r$ surfaces} from a collection of polygons in $\RR^2$
with additional data to glue their edges and differentiations.
They are defined by parametrization maps from the polygons to $\RR^3$
satisfying geometric regularity conditions along edges.

It is easy to define a $C^0$ surface from a collection of polygons and
homeomorphisms between their edges.
\begin{definition} \label{def:g0complex}
	Given a collection $\cM_2$ of (possibly coinciding) polygons $\sigma_i$ in $\RR^2$, 
	a {\em topological surface} $\cM$ is defined by giving a set of homeomorphisms 
	$\mu:\tau_{i}\to{\tau}_{j}$ between pairs of polygonal edges 
	$\tau_{i}\subset\sigma_{i}$, ${\tau}_{j}\subset{\sigma}_{j}$
	($\sigma_{i},{\sigma}_{j}\in\cM_2$). Each polygonal edge can be paired 
	with at most one other edge, and it cannot be glued with itself. 

	A {\em $G^0$-continuous function} on the topological surface $\cM$ 
	is defined by assigning a continuous
	function $f_i$ to each polygon $\sigma_i$, such that the restrictions to the polygonal
	edges are compatible with the homeomorphisms $\mu$.
\end{definition}
The topological surface $\cM$ 
is the disjoint union of the polygons, with some points identified to equivalence
classes by the homeomorphisms $\mu$.
The polygons are also called the faces of $\cM$ and their set is denoted $\cM_{2}$.
Each homeomorphism $\mu$ identifies the edges $\tau_i,{\tau}_j$ of the polygons
$\sigma_{i},\sigma_{j}$ to an {\em interior edge} 
of $\cM$. We say that the edge is shared by the faces
$\sigma_{i}$ and $\sigma_{j}$.
An edge not involved in any homeomorphism $\mu$ is a {\em boundary edge}
of $\cM$. 
The edges of  $\cM$ are the equivalent classes of edges of the
polygons of $\RR^{2}$ identified by the homeomorphisms $\mu$. 
Their set is denoted $\cM_1$.
Similarly, let $\cM_0$ denote the set of $\cM$-vertices, 
that is, equivalence classes of polygonal vertices.
An {\em interior vertex} is an equivalence class of polygonal vertices
$\gamma_0,\gamma_1,\ldots,\gamma_n=\gamma_0$ such that the adjacent
vertices $\gamma_i,\gamma_{i+1}$ are identified by an edge homeomorphism.
The set of  these equivalent classes of identified vertices of the
polygons, or interior vertices of $\cM$, is denoted $\cM_{0}^\circ$.

\subsection{Gluing data}\label{ssec:transition_maps}

The definition of a differential surface $S$
typically requires an atlas of $S$, that is a collection $\{V_p,\psi_p\}_{p\in J}$ such that
$\{V_p\}_{p\in J}$ is an open covering of $S$ \cite{Warner83}.
Each $\psi_p$ is a homeomorphism $\psi_p:U_p\to V_p$, 
where $U_p$ is an open set in $\RR^2$.
For distinct $p,q\in J$ such that $V_p\cap V_q\neq \emptyset$,
let $U_{p,q}:=\psi_p^{-1}(V_p\cap V_q)$ and $U_{q,p}:=\psi_q^{-1}(V_p\cap V_q)$.
Then the map $\psi_q^{-1}\circ\psi_p:U_{p,q}\to U_{q,p}$ is required to be a $C^1$-diffeomorphism. 
The maps $\phi_{pq}:\psi_q^{-1}\circ\psi_p$ are called {transition maps}. 
A differentiable function $f$ on $S$ is a function such that for any open set
$V_{p}$, the composition $f_{p}=f \circ \psi_{p}^{-1}: U_{p} \subset \RR^{2}\to \RR$ is differentiable.

Our objective is to study the space of differentiable functions
that can be constructed on a surface $S$ associated to the topological
surface $\cM$.
Instead of an atlas of a differential surface $S$, we consider a
topological surface $\cM$ together with {\em gluing data} given by maps (that we call {\em transition maps}) between the pairs of faces of $\cM$ that share an edge in $\cM$. 
We make this precise in the following definition.
\begin{definition}\label{def:gluing_data}
	For a topological surface $\cM$, a gluing structure associated to 
	$\cM$ consists of the following:
	\begin{itemize}
	\item for each face $\sigma\in \cM_{2}$ an open set $U_\sigma$ of $\RR^2$
		containing $\sigma$;
	\item for each edge $\tau\in \cM_{1}$ of a cell $\sigma$, an open set 
		$U_{\tau,\sigma}$ of $\RR^2$  containing $\tau$;
 	\item for each edge $\tau\in\cM_{1}$ shared by two faces
 		${\sigma_i},\sigma_j\in \cM_{2}$, a $C^{1}$-diffeomorphism 
 		called the {\em transition map} 
 		$\phi_{\sigma_j,\sigma_i}\colon U_{\tau,\sigma_i}\rightarrow U_{\tau,\sigma_j}$ 
 		between the open sets $U_{\tau,\sigma_i}$ and $U_{\tau,\sigma_j}$, 
 		and also its 	correspondent inverse map $\phi_{\sigma_i,\sigma_j}$;
 	\item for each edge $\tau\in\cM_{1}$ of a cell $\sigma$, the 
 		identity $C^1$-diffeomorphism that defines the identity transition map 
 		$\phi_{\sigma,\tau}=\mathrm{Id}$ between $U_{\sigma}$ and $U_{\tau,\sigma}$.
	\end{itemize}
\end{definition}
 \vspace{-0.5cm}
 \begin{figure}[ht]
	\begin{center}
          \includegraphics[width=4.8cm]{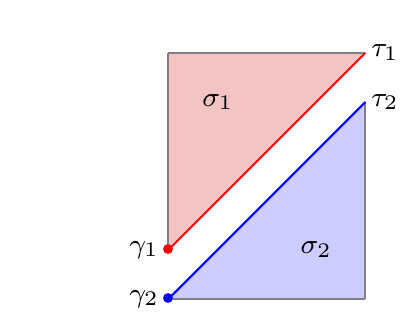}
	\end{center} 
	\vspace{-0.5cm}
	\caption{Topological surface constructed from two triangles.}\label{fig:twotr}
\end{figure}
A transition map as in Definition~\ref{def:gluing_data} differs from
the usual notion of transition map in the context of
differential manifolds (see \cite{Hahn87}), since we do not require compatibility
conditions at the vertices and across edges.
The precise compatibility conditions that we need on these maps
$\phi_{\sigma_j,\sigma_i}$ are given in
Sections~\ref{sec:comp:vertex} ans \ref{sec:topo:restrict}.

\smallskip

Let $\tau=(\tau_{1},\tau_{2})$ be an edge shared by two faces
$\sigma_{1},\sigma_{2} \in \cM_{2}$ and let $\gamma=(\gamma_{1},\gamma_{2})$ be a vertex
of $\tau$ corresponding to $\gamma_{1}$ in $\sigma_{1}$ and to
$\gamma_{2}$ in $\sigma_{2}$, as in Figure~\ref{fig:twotr}. We denote by $\tau_{1}'$
(resp. $\tau'_{2}$) the second edge of $\sigma_{1}$
(resp. $\sigma_{2}$) through $\gamma_{1}$ (resp. $\gamma_{2}$)
We associate to $\sigma_{1}$ and $\sigma_{2}$ two coordinate systems
$(u_{1},v_{1})$ and $(u_{2},v_{2})$ such
that $\gamma_{1}=(0,0)$, $\tau_{1}=\{(u_{1},0), u_{1}\in [0,1]\}$, $\tau'_{1}=\{(0,v_{1}), v_{1}\in [0,1]\}$
and
$\gamma_{2}=(0,0)$, $\tau_{2}=\{(0,v_{2}), v_{2}\in [0,1]\}$, $\tau'_{2}=\{(u_{2},0), u_{2}\in [0,1]\}$.
Using the Taylor expansion at $(0,0)$, a transition map from $U_{\tau,\sigma_1}$ to $U_{\tau,\sigma_2}$ is then of the form 
\begin{equation} \label{eq:transmap}
	\phi_{\sigma_{2},\sigma_{1}}: (u_{1},v_{1}) \longrightarrow  (u_{2},v_{2})=
	\begin{pmatrix}
		v_{1} \,\mb_{\tau,\gamma}(u_{1}) + v_{1}^{2} \rho_{1}(u_{1},v_{1})\\
		u_{1}+v_{1}\,\ma_{\tau,\gamma}(u_{1}) + v_{1}^{2} \rho_{2}(u_{1},v_{1})
	\end{pmatrix}
\end{equation}
where $\ma_{\tau,\gamma}(u_{1}), \mb_{\tau,\gamma}(u_{1}),\rho_{1}(u_{1},v_{1}),\rho_{2}(u_{1},v_{1})$ are $C^{1}$ functions.
We will refer to it as the canonical form of the transition map
$\phi_{\sigma_{2},\sigma_{1}}$ at $\gamma$ along $\tau$. The functions
$[\ma_{\tau,\gamma},\mb_{\tau,\gamma}]$ are called the {\em gluing data} at $\gamma$
along $\tau$ on $\sigma_{1}$. 

\begin{definition}\label{def:crossing}
	An edge $\tau\in\cM$ which contains the vertex $\gamma \in \cM$ 
 	is called a {{\em crossing edge at $\gamma$}} if
	$\ma_{\tau,\gamma}(0)=0$ where $[\ma_{\tau},\mb_{\tau}]$ is the gluing
 	data at $\gamma$ along $\tau$.
 	We define $\cross_{\tau}(\gamma)=1$ if $\tau$ is a 
 	crossing edge at $\gamma$ and $\cross_{\tau}(\gamma)=0$ otherwise.
 	By convention, $\cross_{\tau}(\gamma)=0$ for a boundary edge.
 	If $\gamma\in\cM_0$ is an interior vertex where all adjacent 
 	edges are crossing edges at 
 	$\gamma$, then it is called a {\em crossing vertex}.
 	Similarly, we define $\cross_{+}(\gamma)=1$ if $\gamma$ is a 
 	crossing vertex and $\cross_{+}(\gamma)=0$ otherwise.
\end{definition}

\subsection{Differentiable functions on a topological surface}

We can now define the notion of differentiable function on  $\cM$:
\begin{definition} 
	A differentiable function $f$ on the topological
 	surface $\cM$ is a collection $f=(f_{\sigma})_{\sigma\in \cM}$ of
 	differentiable functions $f_{\sigma}: U_{\sigma}\to \RR$ such that
 	$\forall \gamma \in \tau=\sigma_1\cap\sigma_2$, $\forall \ub \in U_{\tau,\sigma_{1}}$,
	\begin{equation} \label{eq:regcond}
		J_{\gamma} (f_{\sigma_1}) (\ub) = 
		J_{\gamma}(f_{\sigma_2}\circ \phi_{\sigma_2,\sigma_1}) (\ub)
	\end{equation}
	where $J_{\gamma}$ is the jet or Taylor expansion of order $1$ at $\gamma$. 
\end{definition}
If $f_{{1}}, f_{{2}}$ are the functions associated
to the faces $\sigma_{1}, \sigma_{2}\in \cM_{2}$ which are glued along the edge
$\tau$ with a transition map of the form \eqref{eq:transmap}, the
regularity condition \eqref{eq:regcond} leads to the following relations:
\begin{itemize}
	\item $f_1(u_1,0)=f_2\circ \phi_{\sigma_{2},\sigma_{1}}(u_1,0)$ for $u_1\in [0,1]$; that is
		\begin{equation} \label{eq:edgecond0}
 			f_1(u_1,0) = f_2(0,u_1) 
		\end{equation}
	\item $\displaystyle \frac{\partial f_1}{\partial v_1}(u_1,0) 
		=\frac{\partial (f_2\circ \phi)}{\partial v_1}(u_1,0)$ \, for $\phi = \phi_{\sigma_{2},				\sigma_{1}}$ and $u_1\in [0,1]$,
		which translates to 
		\begin{equation} \label{eq:edgecond}
 			\frac{\partial f_1}{\partial v_1}(u_1,0) 
			=\mb_{\tau,\gamma}(u_1)\frac{\partial f_2}{\partial u_2}(0,u_1) 
 			+\ma_{\tau,\gamma}(u_1)\frac{\partial f_2}{\partial v_2}(0,u_1) 
		\end{equation}
		for $u_1\in [0,1]$, with
		$\ma(u_1)=\frac{\partial \phi_1}{\partial v_1}(u_1,0), \ 
		\mb(u_1)=\frac{\partial \phi_2}{\partial v_1}(u_1,0),$ where 
		$\phi_1$ and $\phi_2$ are the components of $\phi$ at the first and the second
		variable respectively.
\end{itemize}
A convenient way to describe this regularity condition is to express the relation
\eqref{eq:edgecond} as a relation between differentials acting on the
space of differential functions on the edge $\tau$:
\begin{equation}\label{eq:differentialrelation}
  \ma_{\tau,\gamma}(u_1)\partial_{ v_2}+
  \mb_{\tau,\gamma}(u_1)\partial_{u_2}
  - \partial_{v_{1}} =0
\end{equation}
With this notation, at a crossing vertex $\gamma$ with $4$ edges we have
$\mb_{\tau,\gamma}(0)\partial_{u_2}  - \partial_{v_{1}} =0$. The differentials along
two opposite edges are ``aligned'', which explains the terminology
of crossing vertex.
\begin{definition}
	A subspace $\cD$ of the vector space of differentiable functions on $\cM$ is
	said to be {\em ample} if at every point $\gamma$ of a face $\sigma$
	of $\cM$, the space of  values and differentials  at $\gamma$, namely 
	$\bigl[ \, f(\gamma), \, \partial_{u_{\sigma}}(f)(\gamma), \, 
	\partial_{v_{\sigma}}f(\gamma) 	\, \bigr]$ for $f\in \cD$, is of dimension $3$.
\end{definition}
This definition does not depend on the choice of the face $\sigma$ to which $\gamma$ belongs,
since for $\gamma$ on a shared edge, the value and differentials
coincide after transformation by the invertible transition map.

\subsection{Compatibility condition at a vertex}
\label{sec:comp:vertex}

Giving gluing data on the edges is not sufficient to ensure
the existence of an ample space of differentiable functions on $\cM$.
At vertices shared by several edges and faces, additional conditions
on the transition maps need to be satisfied.
We describe them in this section, and show that they are
sufficient to construct an ample space of splines on $\cM$ in the following sections.

For a vertex $\gamma \in \cM_{0}^\circ$, (see Fig. \ref{fig:int_vertex}) 
which is common to faces $\sigma_{1}, \ldots,
\sigma_{F}$ glued cyclically around $\gamma$, along the edges $\tau_{i}=\sigma_{i+1}\cap\sigma_{i}$ for $i=1,\ldots, F$ (with 
$\sigma_{F+1}=\sigma_{1}$), we impose the following condition:
\begin{equation}\label{eq:glue-vertex}
	J_{\gamma}(\phi_{1,F})\circ \cdots \circ J_{\gamma}(\phi_{3,2}) 
	\circ J_{\gamma}(\phi_{2,1})(u,v)= (u,v),
\end{equation}
where $J_{\gamma}$ is the jet or Taylor expansion of order $1$ at $\gamma$. 
\begin{figure}[ht]
	\begin{center}
	\includegraphics[width=6cm]{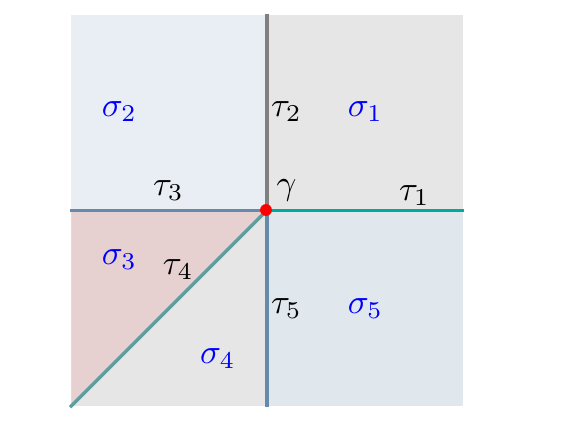}
	\end{center} 
	\vspace{-0.5cm}
	\caption{The faces $\sigma_i$ for $i=1,\dots,5$ are glued cyclically 
	around a vertex $\gamma$.}\label{fig:int_vertex}
\end{figure}
We can assume that for each $i=1, \dots, F$, the edge $\tau_{i}$ is defined 
(linearly) by $v_i=0$ in $\sigma_{i}$. 
It is easy to check that the condition \eqref{eq:glue-vertex} on the Taylor expansion at $\gamma$ leads to the following:
\begin{condition}\label{cond:comp1}
  If the vertex $\gamma$ is on the faces $\sigma_{1},\ldots,\sigma_{F}$
  glued cyclically around $\gamma$, the gluing data $[\ma_{i}, \mb_{i}]$
  at $\gamma$ on  the edges $\tau_{i}$ between
  $\sigma_{i-1}$ and $\sigma_{i}$ satisfies
	\begin{equation}\label{eq:gc1vertex}
		\prod_{i=1}^F \left(
		\begin{array}{cc} 0 & 1 \\ \mb_i(0) & \ma_i(0) \end{array}
		\right) =\left(
		\begin{array}{cc} 1 & 0 \\ 0 & 1 \end{array} \right).
	\end{equation}
\end{condition}
This gives algebraic restrictions on the values $\ma_i(0)$, $\mb_i(0)$.
At a crossing vertex $\gamma$ (see Def.~\ref{def:crossing}) of four incident edges, the equality \eqref{eq:gc1vertex} amounts to
\begin{equation} \label{eq:qbetas}
	\mb_1(0)\mb_3(0)=1,\qquad \mb_2(0)\mb_4(0)=1.
\end{equation}
It turns out that Condition~\ref{cond:comp1} is not sufficient around crossing vertices
for ensuring an ample space of differentiable functions on $\cM$.
An obstruction was noticed in \cite{Peters2010} in a setting 
of rectangular patches. We write this constrain in a general setting: 
\begin{condition}\label{cond:comp2}
  If the vertex $\gamma$ is a crossing vertex with $4$ edges $\tau_{1},\ldots,\tau_{4}$,
  the gluing data $[\ma_{i},\mb_{i}]$ $i=1 \ldots 4$ on these edges at
  $\gamma$ satisfy
	\begin{eqnarray} \label{eq:ddv1}
		\ma'_1(0)+\frac{\mb'_4(0)}{\mb_4(0)} &=& -\mb_1(0) 
		\left(\ma'_3(0)+\frac{\mb'_2(0)}{\mb_2(0)}\right),\\ 
		\label{eq:ddv2}
		\ma'_2(0)+\frac{\mb'_1(0)}{\mb_1(0)} &=& -\mb_2(0) 
		\left(\ma'_4(0)+\frac{\mb'_3(0)}{\mb_3(0)}\right). 
	\end{eqnarray}
\end{condition}
\begin{lemma}
	If the space of differentiable functions on $\cM$ is ample and 
	Condition~\ref{cond:comp1} is satisfied, then the gluing data at every 
	crossing vertex $\gamma$ 	of 4 incident edges must also satisfy 
	Condition~\ref{cond:comp2}.
\end{lemma}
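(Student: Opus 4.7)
The plan is to extract second-order constraints at $\gamma$ by Taylor-expanding the $G^{1}$ regularity relation \eqref{eq:edgecond} to first order along each of the four incident edges, and then propagating the resulting relations cyclically around the vertex; ampleness forces the cyclic consistency to split into the two identities of Condition~\ref{cond:comp2}.

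Concretely, set up coordinates $(u_i,v_i)$ on each face $\sigma_i$ ($i=1,\ldots,4$) with $\gamma$ at the origin, so that on every edge $\tau_i$ the canonical form \eqref{eq:transmap} applies with gluing data $[\ma_i,\mb_i]$. Write $\alpha_i=\mb_i(0)$; the crossing hypothesis gives $\ma_i(0)=0$, and Condition~\ref{cond:comp1} reduces to $\alpha_1\alpha_3=\alpha_2\alpha_4=1$. For any $f\in\cD$, introduce the Taylor coefficients $b_i=\partial_{u_i}f_i(\gamma)$, $c_i=\partial_{v_i}f_i(\gamma)$, and $B_i=\partial_{u_i}\partial_{v_i}f_i(\gamma)$. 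The zeroth order of \eqref{eq:edgecond} at $\gamma$, together with \eqref{eq:edgecond0} differentiated once along each edge, cuts the eight first-order values $(b_i,c_i)_{i=1,\ldots,4}$ down to two independent parameters; by the ampleness hypothesis these may be prescribed arbitrarily, so take them to be $(b_1,c_1)$.

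Differentiating \eqref{eq:edgecond} once along the edge parameter of $\tau_i$ and evaluating at $\gamma$, using $\ma_i(0)=0$, yields a linear relation between $B_{i-1}$ and $B_i$ whose inhomogeneous part involves $\mb'_i(0)$, $\ma'_i(0)$, and the already-determined first-order data. Chaining the four such relations cyclically around $\gamma$ eliminates $B_2,B_3,B_4$; the coefficient of $B_1$ on the right is then $\alpha_1\alpha_2\alpha_3\alpha_4$, which equals $1$ by Condition~\ref{cond:comp1}, and we are left with an identity $0=L(b_1,c_1)$ where $L$ is a linear form whose coefficients depend on $\mb_i(0)$, $\mb'_i(0)$, and $\ma'_i(0)$. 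Since $(b_1,c_1)$ are arbitrary by ampleness, both coefficients of $L$ must vanish; simplifying these two conditions using $\alpha_1\alpha_3=\alpha_2\alpha_4=1$ reproduces precisely \eqref{eq:ddv1} and \eqref{eq:ddv2}.

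The main technical obstacle is purely organizational: one must fix unambiguously on which of the two adjacent faces of $\tau_i$ the canonical form \eqref{eq:transmap} is expressed, and carry the cyclic orientation around $\gamma$ consistently throughout, since the opposite convention would yield \eqref{eq:ddv1}--\eqref{eq:ddv2} only after a matching permutation of indices rather than in their literal form.
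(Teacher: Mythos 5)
Your proposal is correct and follows essentially the same route as the paper: expand each $f_i$ to second order at $\gamma$, use the crossing hypothesis and Condition~\ref{cond:comp1} to reduce the first-order data to two free parameters, chain the four cross-derivative relations around the vertex so that the mixed derivatives cancel (the product $\mb_1(0)\mb_2(0)\mb_3(0)\mb_4(0)=1$ being exactly what makes the elimination close up), and invoke ampleness to force the residual linear form in the two free parameters to vanish identically, which yields \eqref{eq:ddv1} and \eqref{eq:ddv2}. This matches the paper's derivation of the relation \eqref{eq:unwanted} and its subsequent reduction via \eqref{eq:betaq} and \eqref{eq:qbetas}.
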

\begin{proof}
The value and first derivatives at every point $\gamma \in \cM$ of all differentiable functions on $\cM$
should span a space of dimension $3$.

If $\gamma$ is a crossing vertex, then we have 4 restrictions on the Taylor expansions
of a spline components $(f_1,f_2,f_3,f_4)$. 
Let us write the Taylor expansion of $f_{i}$ at $\gamma=(0,0)$ as $f_i=p_i+q_iu_i+r_iv_i+s_iu_iv_i+\ldots$.
The gluing conditions imply the following.
From (\ref{eq:edgecond0}), 
\begin{equation}\label{eq:betaq}
	p_1=p_2=p_3=p_4,\quad
	q_1=r_2, \quad q_2=r_3, \quad 
	q_3=r_4 \quad \text{and}\quad q_4=r_1 
\end{equation}
this together with the Condition (\ref{eq:edgecond}) on the first derivatives imply 
\[ 
	\mb_2(0)q_1-q_3=0, \qquad \mb_3(0)q_2-q_4=0.
\]
When we consider the derivative of $f_{i}$ with respect to $u_i$, again
applying (\ref{eq:edgecond}), we get the conditions
\begin{align*}
	s_{2}-\mb_2(0)s_1=\mb_2'(0)q_1+\ma_2'(0)q_2, \\
	s_{3}-\mb_3(0)s_2=\mb_3'(0)q_2+\ma_3'(0)q_3, \\
	s_{4}-\mb_4(0)s_3=\mb_4'(0)q_3+\ma_4'(0)q_4,\\
	s_{1}-\mb_1(0)s_4=\mb_1'(0)q_4+\ma_1'(0)q_1.
\end{align*}
By combining the last four equations respectively with the weights 
$\mb_4(0)$, $\mb_1(0)\mb_4(0)$, $\mb_1(0)$, $1$, together with Condition~\ref{cond:comp1}, 
we get
\begin{align} \label{eq:unwanted}
	q_1 \,\bigl(\ma_1'(0)+\mb_2(0)\mb_4'(0)\bigr) +
	q_3 \, \mb_1(0) \bigl(\mb_2(0)\ma_3'(0)+\mb'_2(0)\bigr) + \qquad\qquad \nonumber \\
	q_2 \, \bigl(\mb_1(0)\ma_2'(0)+\mb_1'(0)\bigr) +
	q_4 \, \mb_2(0)\bigl(\ma_4'(0)+\mb_1(0)\mb'_3(0)\bigr) = 0.
\end{align}
This relation does not involve the cross derivatives $s_1,s_2,s_3,s_4$,
but gives an unwanted relation between the first order derivatives.
After replacing (\ref{eq:betaq}) and (\ref{eq:qbetas}) in \eqref{eq:unwanted}, 
we encounter conditions (\ref{eq:ddv1}) and (\ref{eq:ddv2}). 
Under these conditions, there is no relation between $q_1,q_2$,
and there is one degree of freedom for $(s_1,s_2,s_3,s_4)$.
\end{proof}

The restrictions (\ref{eq:ddv1}) and (\ref{eq:ddv2}) were noticed in \cite{Peters2010}
in the context of gluing tensor product rectangular patches with all $\mb_{i}(0)=-1$. 
The restrictions are then simply 
\begin{equation*} 
	\ma'_1(0)=\ma'_3(0), \qquad \ma'_2(0)=\ma'_4(0).
\end{equation*}

\subsection{Topological restrictions}
\label{sec:topo:restrict}

A guiding principle for the construction of geometric continuous functions is that
$G^{1}$ properties are equivalent to $C^{1}$ properties in the plane
after an adequate reparameterization of the problem. 
Gluing two faces along an edge is transformed locally via such 
reparameterization maps, into gluing two
half-planes along a line.
Each half-plane is in correspondence with the half-plane
determined by one of the faces and the shared edge.
A natural gluing is to have the half planes on 
each side of the line. In this case,  the points of one face are mapped
by the reparameterizations on one side of the line and the points of
the other face on the other side of the line.
This implies that the transition maps keep locally the points of a
face on the same side of the edge and thus it should have a positive Jacobian at each
point of the edge.
Therefore the first topological restriction that we ask for each edge $\tau$, using the
canonical form \eqref{eq:transmap},  is the following:
\[
\forall u\in [0,1],\  \mb_{\tau}(u)<0.
\]
When the function $\mb_{\tau}$ is positive on the edge; 
the transition map identifies interiors of the polygons.
It corresponds to two patches of surfaces virtually pasted at a sharp edge (i.e., at angle 0)
rather than in a proper continuously smooth manner (i.e., at the angle $\pi$).
In some CAGD applications, it may be useful to model surfaces with sharp wing-like
edges by the $G^1$ continuity restrictions with $\mb_{\tau}>0$.  
But typical $G^1$ continuity applications should require $\mb_{\tau}<0$ on the whole edge
to prevent this degeneration.  
The regularity property across edges, considered 
irrespective of orientation, is called weak geometric continuity \cite[\S 6.7]{derose}
and the restriction we consider corresponds to coherently oriented parametrizations
in \cite[\S 6.8]{derose}. 

Similarly, gluing the faces around a vertex $\gamma$ should be equivalent to
gluing sectors around a point in the plane, via the reparameterization
maps.
Such sectors should form a fan around the parameter
point, which can be identified with the local neighborhood of the vertex $\gamma$
on the surface. Thus these sectors should not overlap. 
If this fan is defined by vectors $\ub_{1},\ldots, \ub_{F'}\in
\RR^{2}$ ($\ub_{i+1}$ is supposed to be outside the union of the sectors defined by two consecutive
vectors $\ub_{j-1},\ub_{j}$ for $2<j<i$), we easily check that the
coefficients $\ma_{i}(0)$, $\mb_{i}(0)$ of the transition map
\eqref{eq:transmap} across the edge $\tau_{i}$ at $\gamma$ are such that:
\begin{equation}\label{eq:relui}
	\ub_{i-1} = \ma_{i}(0) \ub_{i} + \mb_{i}(0)\, \ub_{i+1}
\end{equation}
or equivalently
\[ 
	[\ub_{i},\ub_{i-1}]
 	=\left[ \begin{array}{cc}0& 1\\
 	\mb_{i}(0)& \ma_{i}(0)\end{array}\right]
	[\ub_{i+1},\ub_{i}] 
\]
(see also the construction in the next section \ref{sec:gluingexpl}).
If the sector angles are less than $\pi$ (i.e. the sector
$\ub_{i},\ub_{i+1}$ coincides with the cone generated by $\ub_{i},\ub_{i+1}$)
the condition that the sectors form a fan and do not overlap translates as follows:
the coefficients of the last row of 
\begin{equation*}
	\prod_{i=j}^{k} \left(
	\begin{array}{cc} 0 & 1 \\ \mb_i(0) & \ma_i(0) \end{array}
	\right) 
\end{equation*}
should not be both non-negative for $1<j\le k<F$.

A natural way to define transition maps at a vertex $\gamma$ which satisfy
this condition is to choose vectors in the plane that define a fan, as
in Figure \ref{fig:int_vertex}. Then the coefficients $\ma_{i}(0)$,
$\mb_{i}(0)$ are uniquely determined from the relations \eqref{eq:relui}.

The topological constraints 
could be dropped in some applications, for example, 
when modeling analytical surfaces with branching points, 
or surfaces with sharp wing-like interior edges,
or with winding-up boundary. In these specific applications,
the compatibility Condition \ref{cond:comp2} at {\em crossing vertex} might need to be extended,
to allow winding up of 8, 12, etc., crossing edges, and take into account 
the sharp edges.
Apart from this kind of consideration, the topological conditions do not essentially affect our algebraic
dimension count.

The framework that we propose is more general than previous 
approaches used in Geometric Modeling to define $G^{1}$ splines (see
e.g. \cite[\S 3]{Peters:2008})
since it allows to define differentiable functions on topological
surfaces such as a M\"obius strip or a Klein bottle.

Moreover, it does not rely on the construction of manifold surfaces and 
atlas, but only on compatible transition maps.

\subsection{Example}\label{sec:gluingexpl}
A simple way to define transition maps is to use a symmetric gluing
as proposed in \cite [\S 8.2]{Hahn87} for rectangular patches. If $\tau= (\gamma_{0},\gamma_{1})$ is
the shared edge between $\sigma_{1}$ and $\sigma_{2}$, the
transition map can be of the form:
\begin{equation}\label{eq:hahngluing}
	\phi (u,v)= \left (
	\begin{array}{c}
		-v \\
		 u + 2v \bigl(\md_{0} (u) \cos {\frac{2\pi}{n_{0}}} 
		- \md_{1} (u) \cos \frac{2\pi}{n_{1}} \bigr) 
	\end{array}
	\right)
\end{equation} 
where $n_{0}$ (resp. $n_{1}$) is the number of edges at the vertex $\gamma_{0}$ (resp.
$\gamma_{1}$). Additionally, if $\gamma_{0}$ corresponds to $u=0$ and
$\gamma_{1}$ to $u=1$, the functions $\ma$ and $\mb$
interpolate $0$ and $1$: $\md_{0}(0)=1$, $\md_{0} (1)=0$, $\md_{1} (0) =0$, $\md_{1}(1)=1$
and their derivatives of order 1 should vanish at $0$, $1$. It
corresponds to a symmetric gluing, where the angle of two consecutive
edges at $\gamma_{i}$ is $\frac{2\pi}{n_{i}}$.
If $\md_{0} (u)$ and $\md_{1} (u)$ are polynomial functions, their degree must be at
least $3$.
If $\md_{0} (u)$ and $\md_{1} (u)$ are rational functions with the
same denominator, the maximal degree of the numerators and denominator must be
at least $2$.
As we will see the dimension of the spline space decreases when the
degree increases. Thus it is important to construct transition maps with low degree
numerators and denominators. See e.g. \cite{PetersHandbook,Peters2010} for low degree
constructions, which depend on the structure of $\cM$.

A general construction of gluing data which satisfies the
compatibility conditions is as follows.
\begin{enumerate}
	\item For all the vertices $\gamma\in \cM_{0}$ and for all the edges
  	$\tau_{1},\ldots, \tau_{F}$ of $\cM_{1}$ that contain $\gamma$, choose vectors
   	$\ub_{1},\ldots,\ub_{F}\in \RR^{2}$ such that the cones generated by
   	$\ub_{i},\ub_{i+1}$ form a fan in $\RR^{2}$  and such that the
   	union of these cones is $\RR^{2}$ when $\gamma$ is an interior vertex.
   
   	Compute the transition map $\phi_{\sigma_{i},\sigma_{i-1}}$ at 
   	$\gamma=(0,0)$ on the edge $\tau_{i}$
		\[
			\phi_{\sigma_{i},\sigma_{i-1}}(0,0) = S \circ [\ub_{i},\ub_{i+1}]^{-1}\circ
			[\ub_{i-1},\ub_{i}] \circ S
			=\left[ \begin{array}{cc}0&\mb_{\tau_{i}}(0)\\ 
			1& \ma_{\tau_{i}}(0)\end{array}\right]
		\]
		where $S=\left[ \begin{array}{cc}0&1\\1&0\end{array}\right]$,
		$[\ub_{i},\ub_{i+1}]$ is the matrix which columns are the vectors
		$\ub_{i}, \ub_{i+1}$, $|\ub_{i},\ub_{j}|$ is the determinant of the
		vectors $\ub_{i},\ub_{j}$	and 
		\[
			\ma_{\tau_{i}}(0) = \frac{|\ub_{i-1},\ub_{i+1}|}{|\ub_{i},\ub_{i+1}|},
			\mb_{\tau_{i}}(0) =-\frac{|\ub_{i-1},\ub_{i}|}{|\ub_{i},\ub_{i+1}|}.
		\]
	\item For all the edges $\tau\in \cM_{1}$, define the rational functions	
		$\ma_{\tau}=\frac{a_{\tau}}{c_{\tau}},\mb_{\tau}=\frac{b_{\tau}}{c_{\tau}}$ on the
		edges $\tau$ by interpolation as follows: 
		if there is no crossing edge in $\cM_{1}$, then a linear interpolation of the value
		at the vertices is sufficient.
		If $\gamma_1,\ldots,\gamma_{n}$ is a sequence of crossing vertices,
		and $\gamma_0\gamma_1,\gamma_1\gamma_2,\ldots,\gamma_{n}\gamma_{n+1}$ 
		is a sequence of edges passing ``straight" through them, we can choose linear gluing data 
		on one edge, and quadratic data on the remaining edges of the sequence
		so that the constraints (\ref{eq:ddv1}) and (\ref{eq:ddv2}) are
		satisfied.
\end{enumerate}
Therefore, for general meshes, gluing data which satisfy the
compatibility condition and the topological condition can be
constructed in degree $\le 2$.

\section{Spline space on a topological surface}

The main object of our study is the space of functions on the topological surface
$\cM$, which are differentiable and piecewise polynomial. Such
functions are called {\em spline functions} on $\cM$.
Let $\cR(\sigma)=\RR[u_{\sigma},v_{\sigma}]$ be the ring of
polynomials in the variables $(u_{\sigma},v_{\sigma})$ attached to the
face $\sigma$.
A spline function $f$ is defined by assigning to each face $\sigma\in
\cM_{2}$ a polynomial $f_{\sigma}\in \cR(\sigma)$, and by imposing the
regularity conditions across the shared edges.

We also consider rational gluing data on the interior edges $\tau\in \cM_{1}$:
\begin{equation}\label{eq:edgeabc} 
	\ma_{\tau}(u_{1})=\frac{a_{\tau} (u_{1})}{c_{\tau} (u_{1})} \quad 
 	\text{and\quad}\mb_{\tau}(u_{1}) 
	=\frac{b_{\tau} (u_{1})}{c_{\tau} (u_{1})}
\end{equation}
with $a_{\tau} (u_{1}), b_{\tau} (u_{1})$ and $c_{\tau} (u_{1})$ polynomials in the variable 
$u_{1}$, where $b_{\tau}(u_{1})$ and $c_{\tau}(u_{1})$ do not vanish
on $\tau$ (i.e. for $u_{1}\in [0,1]$).
As $b(u_{1}), c(u_1)$ do not vanish on $\tau$, the transition map $\phi_{{\sigma_{2}},\sigma_1}$ 
is a $C^{1}$-diffeomorphism in a neighborhood of the edge $\tau=(\tau_{1},\tau_{2})$ between $\sigma_1$ and $\sigma_1$. 
The polynomial vector $[a_{\tau},b_{\tau},c_{\tau}]$ is also
called hereafter the gluing data of  the edge $\tau$. 
We assume hereafter that {\em the transition maps satisfy Conditions
\ref{cond:comp1},  \ref{cond:comp2} and all crossing vertices of $\cM$
have 4 edges}.

We can now define the space of splines on $\cM$:
\begin{definition}\label{def:splinesp}
	We denote by $\cS^{1}(\cM)$ the ($\RR$-linear) space of
	differentiable functions on the topological surface 
	$\cM$ which are defined by assigning polynomials 
	to the faces  $\sigma\in \cM_{2}$ satisfying the $G^{1}$ constrains
	\eqref{eq:regcond}. More precisely,
	\begin{equation*}
		\cS^{1}(\cM):=\{f\in \oplus_{{\sigma}\in \cM_{2}}\cR ({\sigma}) \mid
		J_{\gamma} (f_{\sigma_1})  = J_{\gamma}(f_{\sigma_2}\circ \phi_{\sigma_2,\sigma_1}) 
		\ \forall \gamma \in \tau=\sigma_1\cap\sigma_2\},
	\end{equation*}
	where $J_{\gamma}$ is the jet or Taylor expansion of order $1$.
\end{definition}
A  spline $f\in\cS^{1}(\cM)$ gives a piecewise-polynomial map, 
defined on every face of $\cM$, and the jets of order ${1}$ coincide on the shared edges. 
This definition implies that a spline function $f\in \cS^{1}(\cM)$
is $C^{{1}}$ on a neighborhood of a shared edge $\tau=\sigma_1\cap \sigma_2$ if we use the
re-parametrization $f_{\sigma_2} \circ \phi_{\sigma_2,\sigma_1}$.

\smallskip

Definition \ref{def:splinesp} can be directly extended to splines of any order $r$
but in this paper we only consider $r=1$.

\subsection{Polynomials on faces}
On each face ${\sigma}\in \cM_{2}$, we consider polynomials of degree bounded
by $k\in \NN$.

If $\sigma\in \cM_{2}$ is a triangle $(P,Q,R)$, we denote by $\cR_{k} ({\sigma})$ 
the finite dimensional vector space of polynomials in 
$\RR[u_{{\sigma}},v_{{\sigma}}]$ with total degree bounded by $k$.

After a change of coordinates, we may assume that the coordinate function $u=u_{\sigma}$ satisfies $u_\sigma(PR)=0$ 
and $u_{\sigma}(Q)=1$, while $v=v_{\sigma}$ satisfies $v_{\sigma}(PQ)=0$ and $v_{\sigma}(R)=1$.
Introducing $w=1-u-v$, we can express any polynomial in
$\cR_{k} ({\sigma})$ as a homogeneous polynomial 
of degree $\le k$ in the {\em barycentric} coordinates $u,v,w$
using the {\em Bernstein-B\'ezier basis}:
\[
	b_{i,j}^{^{\Delta}}(u,v,w)=\frac{k!}{i!j!(k-i-j)!} \, u^i \, v^j \, w^{k-i-j},
\]
for $0\le i+j \le k$.
We verify directly that for a function
\[ 
	f(u,v) = \sum_{0\le i+j\le k} c_{i,j} b_{i,j}^{^{\Delta}}(u,v,1-u-v)
\]
expressed in this basis, we have
\begin{eqnarray}\label{eq:TaylorBernsteinTriang}
	f(0,0) &=& c_{0,0},\\ \nonumber
 	\partial_{u}f(0,0)&= &k (c_{1,0}-c_{0,0}),\ \ \ 
 	\partial_{v}f(0,0)= k(c_{0,1}-c_{0,0}),\\ \nonumber
 	\partial_{u} \partial_{v}f(0,0)&=& k\, (k-1)\, (c_{1,1}-c_{1,0}-c_{0,1} + c_{0,0}). 
\end{eqnarray}
If $\sigma\in \cM_{2}$ is a rectangle $(P,Q,R,S)$,
we will denote by $\cR_{k}({\sigma})$ the finite dimensional vector space of
polynomials in $\RR[u_{{\sigma}},v_{{\sigma}}]$ with partial degree in
$u_{\sigma}$ and $v_{\sigma}$ bounded by $k$, where 
$u=u_\sigma$ is chosen such that $u_{\sigma}(PS)=0$, $u_{\sigma}(QR)=1$, 
and $v=v_{\sigma}$ is chosen such that $v_{\sigma}(PQ)=0$, $v_{\sigma}(RS)=1$.
Introducing $\tilde{u}=1-u$, $\tilde{v}=1-v$, we can express
any polynomial function of $\cR_{k}(\sigma)$ as a bi-homogeneous polynomial of degree $k$ 
in $u,\tilde{u}$ and degree $k$ in $v,\tilde{v}$, using
the {\em tensor product Bernstein-B\'ezier basis}
\[
	b_{i,j}^{^{\Box}}(u,\tilde{u},v,\tilde{v})=\frac{k!k!}{i!j!(k-i)!(k-j)!} \, u^i 
	\,\tilde{u}^{k-	i} \, v^j \, \tilde{v}^{k-j}.
\]
for $0\le i\le k$, $0\le j\le k$.
We verify directly that for a function $f= \sum_{0\le i,j \le k} c_{i,j}
b_{i,j}^{^{\Box}}$ expressed in this basis, we have
\begin{eqnarray}\label{eq:TaylorBernsteinQuad}
	f(0,0)&=& c_{0,0},\\ \nonumber
 	\partial_{u}f(0,0) &=& k(c_{1,0}-c_{0,0}), \ \ \ 
 	\partial_{v}f(0,0) = k(c_{0,1}-c_{0,0}),\\ \nonumber
 	\partial_{u} \partial_{v}f(0,0)&=& k^{2} \,(c_{1,1}-c_{1,0}-c_{0,1} + c_{0,0}).
\end{eqnarray}
The finite dimensional vector space of spline functions
$f=(f_{\sigma})_{\sigma\in \cM_{2}} \in \cS(\cM)$ of degree bounded by
$k\in \NN$ on each face ($f_{\sigma}\in
\cR_{k}(\sigma)$) and of regularity $r$ is denoted $\cS_{k}^{r}(\cM)$
or simply $\cS_{k}(\cM)$ when $r=1$.

\subsection{Taylor maps}
An important tool that we are going to use intensively is the Taylor
map associated to a vertex or to an edge of $\cM$.

Let $\gamma\in \cM_{0}$ be a vertex on a face $\sigma\in
\cM_{2}$ belonging to two edges $\tau, \tau' \in \cM_{1}$ of
$\sigma$. We define the {\em ring of $\gamma$ on $\sigma$} by 
 $\cR^{\sigma}(\gamma)= \cR(\sigma)/(\ell_{\tau}^{2}, \ell_{\tau'}^{2})$
where $(\ell_{\tau}^{2}, \ell_{\tau'}^{2})$ is the ideal generated by 
the squares of $\ell_\tau$ and $\ell_{\tau'}$, the equations $\ell_{\tau}(u,v)=0$ and $\ell_{\tau'}(u,v)=0$
are respectively the equations of $\tau$ and $\tau'$ in $\cR(\sigma)=\RR[u,v]$.

The {\em Taylor expansion at $\gamma$ on $\sigma$} is the map
\[
	T_{\gamma}^{\sigma}: f \in \cR(\sigma) \mapsto f \mod 
	(\ell_{\tau}^{2}, \ell_{\tau'}^{2}) \text{\; in\; } \cR^{\sigma}(\gamma).
\]
Choosing an adapted basis of $\cR^{\sigma}(\gamma)$, one can define $T_{\gamma}^{\sigma}$ by 
\[
	T_{\gamma}^{\sigma}(f) = \bigl[\, f(\gamma),\, \partial_{u} f(\gamma), \, 
	\partial_{v} 		f(\gamma),\, \partial_{u}\partial_{v} f(\gamma)\, \bigr]. 
\]
The map $T_{\gamma}^{\sigma}$ can also be defined in  another basis
of $\cR^{\sigma}(\gamma)$ in terms of the Bernstein coefficients by
\[ 
	T_{\gamma}^{\sigma}(f) = \bigl[\, c_{0,0}(f),\,  c_{1,0}(f), \, c_{0,1}(f),\,  
	c_{1,1}(f) \, \bigr]
\]
where $c_{0,0}, c_{1,0}, c_{0,1}, c_{1,1}$ are the first Bernstein coefficients associated to $\gamma=(0,0)$.

We define the Taylor map  $T_\gamma$ on all the faces $\sigma$ that contain $\gamma$, 
\[
	T_{\gamma}: f=(f_{\sigma})\in \oplus_{\sigma}\cR(\sigma)
	\rightarrow (T_{\gamma}^{\sigma}(f_{\sigma}))\in
	\oplus_{\sigma\supset\gamma} \cR^{\sigma}(\gamma).
\]
Similarly, we define $T_{0}$ as the Taylor map at all the vertices on
all the faces of  $\cM$. 

\smallskip

For an edge $\tau\in \cM_{1}$ on a face $\sigma\in \cM_{2}$, 
we define the {\em ring of $\tau$ on $\sigma$} by $\cR^{\sigma}(\tau)= \cR(\sigma)/(\ell_{\tau}^{2})$
where $\ell_{\tau}(u,v)=0$
is the equation of $\tau$ in $\cR(\sigma)=\RR[u,v]$.
The {\em Taylor expansion along $\tau$ on $\sigma$} is defined by 
\[
	T_{\tau}^{\sigma}: f \in \cR(\sigma) \mapsto f \mod 
	(\ell_{\tau}^{2}) \text{\, in }\cR^{\sigma}(\tau),
\]
and the Taylor map on all the faces $\sigma$ that contain $\tau$ 
is given by 
\[
	T_{\tau}: f=(f_{\sigma})\in \oplus_{\sigma}\cR(\sigma)
	\rightarrow (T_{\tau}^{\sigma}(f_{\sigma}))\in
	\oplus_{\sigma\supset\tau} \cR^{\sigma}(\gamma).
\]
Similarly, we define $T_{1}$ as the Taylor map along all the edges on
all the faces of $\cM$.

\section{$G^{1}$ splines along an edge}

To analyze the constraints imposed by gluing data along an edge, we
consider first a simple topological surface $\cN$ composed of two 
faces $\sigma_{1}, \sigma_{2}$ glued along an edge $\tau$. 
  
A spline function $f\in S_{k}^{1}(\cN)$ on $\cN$ is represented by a pair of
polynomials $f=(f_{1},f_{2})$ with $f_{i}\in \cR(\sigma_{i})=\RR[u_{i},v_{i}]$ for $i=1,2$.

By a change of coordinates, we assume that the edge $\tau$ is defined
by $v_{1}=0$ and $u_{1}\in [0,1]$ in $\sigma_{1}$ and by $u_{2}=0$ and
$v_{2}\in [0,1]$ in $\sigma_{2}$.

\subsection{Splines and syzygies}\label{eq:edgespace}

With the transition map $\phi_{\sigma_2,\sigma_1}$ defined by  the rational
functions $\ma=\frac{a_{\tau}}{c_{\tau}}$ and
$\mb=\frac{b_{\tau}}{c_{\tau}}$ as in \eqref{eq:edgeabc}, the
differentiability Condition (\ref{eq:edgecond}) along the interior edge
$\tau$ becomes
\begin{equation*}
	a(u_1)A(u_1)+b(u_1)B(u_1)+c(u_1)C(u_1)=0,
\end{equation*}
where
\[
	A(u_1)=\frac{\partial f_2}{\partial v_2}(0,u_1), \quad
	B(u_1)=\frac{\partial f_2}{\partial u_2}(0,u_1), \quad
	C(u_1)=-\frac{\partial f_1}{\partial v_1}(u_1,0).
\]
Thus, the $G^1$-smoothness condition along an interior edge 
is equivalent to the condition on $(A,B,C)$ of being a 
{\em syzygy} of the polynomials $a(u_1), b(u_1), c(u_1)$.

\bigskip

More precisely, a $G^1$ spline $(f_1, f_ 2)$ on $\cN$ is constructed from a syzygy $(A,B,C)$
of $a, b, c$ by defining:
\begin{align} \label{eq:edgeint1}
	f_1(u_1,v_1)=c_0+\int_0^{u_1} \! A(t)dt-v_1C(u_1)+v_1^2E_1(u_1,v_1),\\
 	\label{eq:edgeint2}
	f_2(u_2,v_2)=c_0+\int_0^{v_2} \! A(t)dt+u_2B(v_2)+u_2^2E_2(u_2,v_2),
\end{align}
where $c_0\in\RR$ is any constant, and $E_1,E_2$ are (any) polynomials in $\RR[u_i,v_i]$ for $i=1,2$, respectively.

We will use this representation for the splines on $\cN$ to compute
the dimension of the space of $G^1$ splines $\cS_k^1(\cN)$, see Proposition \ref{prop:dimN} below. 
Before, we introduce some notation, both for the proof and the dimension formula. 

\medskip

The module of syzygies of $a(u_1), b(u_1), c(u_1)$ over the ring
$\RR[u_1]$ is denoted by $Z=\Syz(a,b,c)$. For $(A,B,C)\in Z$, the maximum of the degrees, 
$\max(\deg A$, $\deg B$, $\deg C)$ is called 
the {\em coefficient degree} of the syzygy. 

Each of the faces $\sigma_1$ and $\sigma_2$ in $\cN$ can be a triangle or 
a rectangle. Let us denote by $F_{\Box}$ the number of rectangles
and by $F_{\Delta}$ the number of triangles in $\cN$.
\begin{definition} \label{def:Fm}
	As before, let $\sigma_{1},\sigma_{2}$ be the faces of $\cN$.
	We define 
	\[
		m=\min\bigl(F_{\Delta}(\sigma_{1}) ,F_{\Delta}(\sigma_{2})\bigr),
	\]
	where $F_{\Delta}(\sigma_{i})=1$ if $\sigma_i$ is a triangle and
	$0$ otherwise. For the polynomials $a,b,c\in \RR[u_1]$ defining 
	the gluing data along the edge $\tau$, let 
	$ n = \max\bigl(\deg(a), \deg(b), \deg(c)\bigr)$, 
	\[
		d_{a} = n + 1,\quad 
		d_{b} = n + F_{\Delta}(\sigma_{2}),
		\quad \text{and}\quad 
		d_{c} = n + F_{\Delta}(\sigma_{1}),
	\]
	and
	\[
	e = 
	\begin{cases}
		0\,, &\mbox{\; if 
		$ \min\bigl( d_{a} - \deg(a), d_{b} - \deg(b), d_{c} - 
		\deg(c) \bigr) = 0 $\, and} \\ 
		1\,, &\mbox{\; otherwise.}\\ 
	\end{cases}
	\]
\end{definition}

By the formulas \eqref{eq:edgeint1} and \eqref{eq:edgeint2}
representing a spline $(f_1,f_2)\in\cS^1_k(\cN)$, let us notice that
we need to consider syzygies $(A,B,C)$ of $a,b,c$ such that
$\deg(A)\le k-1$, $\deg(B)\le k-F_{\Delta}(\sigma_{2})$,
and $\deg(C)\le k-F_{\Delta}(\sigma_{1})$. The reason is that $f_{i}$ is of
bidegree at most $(k,k)$ if $\sigma_{i}$ is a rectangle and of total
degree at most  $k$ if $\sigma_{i}$ is triangle.
\begin{definition}\label{def:Zk}
	For $k\geq 0$, we will denote by $Z_k$ the vector subspace of 
	$Z$ of syzygies of $(a, b, c)$ defined as the set
	\begin{align*}
		Z_k = \{ (A, B, C)\in Z \colon \deg(A)\leq & k-1, \; \deg(B)\le k-F_{\Delta}(\sigma_{2}),\\
		&\text{\, and\, } \deg(C)\le k-F_{\Delta}(\sigma_{1})\}.
	\end{align*}
\end{definition}
Let us consider the map
\begin{eqnarray}\label{eq:deftheta}
	\Theta_\tau : Z &\rightarrow & \cS^{1}(\cN) \\
 	(A,B,C) & \mapsto& \biggl(\int_0^{u_1} \! A(t)dt-v_1C(u_1), \, 
 	\int_0^{v_2} \! A(t)dt+u_2B(v_2) \biggr).\nonumber
\end{eqnarray}
By construction, we have $\Theta_\tau(Z_{k})\subset \cS_{k}^{1}(\cN)$.

The dimension of $Z_k$, as a vector space over $\RR$, will be
deduced from classical results on graded modules over $S=\RR[u_{0},u_1]$. 
We will study the module $\Syz(\bar{a}, \bar{b}, \bar{c})$, where
$\bar{a}, \bar{b}, \bar{c}\in S$ are the homogenization of $a, b$ and $c$ in degree $d_a, d_b$ and $d_c$ respectively. 
The elements in $\Syz(\bar{a}, \bar{b}, \bar{c})$ in degree $n+k$ will precisely lead to the syzygies in $Z_k$. 
\begin{lemma} \label{lm:syzygy}
	For polynomials $a, b, c \in\RR[u_1]$, with $b,c\neq 0$, $\gcd(a,b,c)=1$ 
	and $Z=\Syz(a, b, c)$ as defined above,
	\begin{enumerate}
	\item $Z$ is a free $\RR[u_1]$-module of rank $2$. 
	\item The module $Z$ is generated by vectors
		$(A_1,B_1,C_1)$, $(A_2,B_2,C_2)$ of coefficient degree 
		$\mu$ and $\nu = n - \mu+1 +F_{\Delta}-e -2 m$
		where $\mu$ is the smallest possible coefficient degree.
	\item For $k\in \NN$, the dimension of $Z_{k}$ as vector space over $\RR$ is given by
		\[
 			\dim Z_{k} = (k-\mu-m+1)_{+} + (k-n+\mu+m-F_{\Delta}+e)_{+}
		\]
		where $t_{+}=\max(0,t)$ for $t\in \ZZ$.
	\item The generators $(A_1,B_1,C_1)$, $(A_2,B_2,C_2)$ of $Z$ can be
	 chosen so that
		\[
			(a,b,c)=(B_1C_2-B_2C_1, C_1A_2-C_2A_1, A_1B_2-A_2B_1). 
		\]
	\end{enumerate}
\end{lemma}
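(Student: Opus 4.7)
The plan is to reduce the lemma to the graded Hilbert--Burch theorem via homogenization.

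Part (1) follows from elementary commutative algebra. Since $R = \RR[u_1]$ is a principal ideal domain, every finitely generated torsion-free $R$-module is free, and $Z$ is a finitely generated submodule of $R^3$. The short exact sequence
\[
 0 \longrightarrow Z \longrightarrow R^3 \xrightarrow{\;(a,b,c)\cdot\;} (a,b,c)R \longrightarrow 0
\]
identifies the cokernel with $R$ via $\gcd(a,b,c) = 1$, so $Z$ has rank $3 - 1 = 2$.

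For parts (2)--(4), the next step is to pass to the graded setting. Set $S = \RR[u_0, u_1]$ and let $\bar a, \bar b, \bar c$ denote the homogenizations of $a, b, c$ in degrees $d_a, d_b, d_c$. Dehomogenization $u_0 \mapsto 1$ induces an $\RR$-linear isomorphism from the graded piece $\bar Z_{n+k}$ of $\bar Z := \Syz(\bar a, \bar b, \bar c) \subset S^3$ onto $Z_k$, as is clear from the degree bounds in Definition~\ref{def:Zk}. A direct check using $n = \max(\deg a, \deg b, \deg c)$ shows that $\gcd(\bar a, \bar b, \bar c) = u_0^e$ with $e$ as in Definition~\ref{def:Fm}; dividing by $u_0^e$ produces coprime homogeneous polynomials $\hat a, \hat b, \hat c$ of degrees $d_a - e, d_b - e, d_c - e$ with the same syzygy module. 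Because $(\hat a, \hat b, \hat c)$ is $\mathfrak m$-primary in the regular graded ring $S$ of global dimension $2$, the graded Hilbert--Burch theorem supplies a minimal free resolution
\[
 0 \longrightarrow S(-\alpha') \oplus S(-\beta') \xrightarrow{\;\Phi\;} S(-(d_a-e)) \oplus S(-(d_b-e)) \oplus S(-(d_c-e)) \longrightarrow S,
\]
identifying $\bar Z$ as a free graded module of rank $2$ and, moreover, asserting that $\hat a, \hat b, \hat c$ equal the signed maximal minors of $\Phi$ up to a common unit. Additivity of the Hilbert series at $t = 1$ yields $\alpha' + \beta' = (d_a - e) + (d_b - e) + (d_c - e)$.

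To translate between the graded picture and the statement of the lemma, note that the coefficient degree of the dehomogenization of an element of $\bar Z_t$ equals $t - \min(d_a, d_b, d_c) = t - (n + m)$, the last equality coming from $d_a = n+1$, $d_b = n + F_\Delta(\sigma_2)$, $d_c = n + F_\Delta(\sigma_1)$. Taking $\xi_1$ to be a generator of minimum coefficient degree $\mu$ and $\xi_2$ a basis complement, this translation combined with the degree sum above and the shift $\bar Z_t = \hat Z_{t-e}$ between the two gradings forces $\nu = n - \mu + 1 + F_\Delta - e - 2m$, establishing (2). The dimension formula (3) then follows from $\dim \bar Z_{n+k} = (n+k - t_1 + 1)_+ + (n+k - t_2 + 1)_+$ with generator degrees $t_1 = n + m + \mu$ and $t_2 = 2n + 1 + F_\Delta - e - m - \mu$ in the $\bar Z$-grading, specialized at $t = n + k$. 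Part (4) follows by dehomogenizing the Hilbert--Burch minor relation and absorbing the common unit by rescaling one generator. The most delicate point, and the place where I expect most of the work, is the degree bookkeeping that relates the graded total degree in $S$, the coefficient degree in $\RR[u_1]$, and the $u_0^e$-shift; beyond this, the results follow directly from standard graded Hilbert--Burch.
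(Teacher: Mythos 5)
Your proof is correct and follows essentially the same route as the paper's: homogenize $a,b,c$ to degrees $d_a,d_b,d_c$, identify $Z_k$ with the degree-$(n+k)$ graded piece of the homogeneous syzygy module, apply the graded resolution/Hilbert--Burch machinery, and read off the generator degrees from additivity of the Hilbert series. The only (harmless) variations are that you establish rank $2$ directly from the PID structure of $\RR[u_1]$ rather than deducing $L=2$ from the Hilbert-polynomial count, and that you divide out the common factor $u_0^{e}$ before invoking Hilbert--Burch, whereas the paper keeps $(\bar{a},\bar{b},\bar{c})$ and accounts for $e$ via the Hilbert polynomial of $S/I$.
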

\begin{proof}
We study the syzygy module $Z = \Syz(a,b,c)$ using results on graded resolutions. 
For this purpose, we homogenize 
$a, b$ and $c$ in degree $d_{a}=n+1$,
$d_{b}=n+F_{\Delta}(\sigma_{2})$, 
and $d_{c}=n+F_{\Delta}(\sigma_{1})$, respectively, 
where $F_\Delta(\sigma_i)$ is as in Definition \ref{def:Fm}. 
Let $u_0, u_1$ be the homogeneous coordinates, and 
$\bar{a}, \bar{b}, \bar{c}$ the corresponding homogenizations
of $a$, $b$, and $c$. We consider the module of homogeneous syzygies 
$\Syz(\bar{a}, \bar{b}, \bar{c})$
over the polynomial ring $S = \RR[u_0,u_1]$.
\begin{claim}\label{claim:Zk}
	For any $k\geq 0$, the elements in $Z_k$ are 
	exactly the syzygies of degree $n+k$ in $\Syz(\bar{a}, \bar{b}, \bar{c})$
	after dehomogenization by setting $u_0 = 1$.
\end{claim}
\begin{proof}
It is clear that if $\bar{A}\bar{a} + 
\bar{B}\bar{b} + \bar{C}\bar{c} = 0$, then by dehomogenization taking $u_0 = 1$, 
we get a syzygy $(A, B, C)$ of $(a, b, c)$. 
Moreover, if 
$\deg(\bar{A}\bar{a}) = \deg(\bar{B}\bar{b}) = \deg(\bar{C}\bar{c}) =
n + k $, then 
$\deg(\bar{A}) = k - 1 $, $\deg(\bar{B}) = k - F_\Delta (\sigma_2)$ and
$\deg(\bar{C}) = k - F_\Delta (\sigma_1)$. 
It follows that $(A, B, C)\in Z_k$. 

On the other hand, any syzygy $(A,B,C)\in Z_k$ is given by polynomials
that satisfy the conditions in Definition \ref{def:Zk}. 
Thus $\max \{\deg A, \deg B, \deg C\} \leq k$, and since $ n =\max\{\deg a, \deg b,
\deg c\}$ then we may consider the homogenization 
of the polynomial $Aa + Bb +Cc$ in degree $n+k$. 
These polynomials satisfy
\begin{align*}
	0 & = u_0^{k+n} (A a + B b + C c) (u_1/u_0\bigr) \\
 	& = u_0^{k-1} \cdot u_0^{n+1} Aa\bigl(u_1/u_0\bigr) + u_0^{k - F_{\Delta}   
 	(\sigma_2)} \cdot 	u_0^{n+F_{\Delta}(\sigma_2)} Bb\bigl(u_1/u_0\bigr) \\
 	&\hspace{3.88cm} + u_0^{k - F_{\Delta}(\sigma_1)} \cdot u_0^{n+F_{\Delta} (\sigma_1)} 
 	Cc\bigl(u_1/u_0\bigr).
\end{align*}
It is easy to check that 
\[
	\bar{A} = u_0^{k-1}A(u_1/u_0),
	\quad \bar{B} = u_0^{k-F_\Delta(\sigma_2)}B(u_1/u_0), 
	\quad \bar{C} = u_0^{k - F_\Delta(\sigma_1)}C(u_1/u_0)
\] 
are all polynomials in $\RR[u_1,u_0]$, and define a syzygy of 
$\bar{a}, \bar{b}, \bar{c}$ of degree $n+k$. 
Let us also notice that the polynomials 
\[
	\bar{a} = u_0^{n+1}a(u_1/u_0), 
	\; \bar{b} = u_0^{n + F_\Delta(\sigma_2)}b(u_1/u_0), \; \, 
 	\text{and} 
 	\; \bar{c} = u_0^{n + F_\Delta(\sigma_1)}c(u_1/u_0)
\]
are precisely the homogenization
of $a, b, c$ in degree $d_a, d_b, d_c$, respectively. 
\end{proof}

As $gcd(a,b,c)=1$, we have 
$gcd(\bar{a}, \bar{b}, \bar{c})=u_{0}$ if $e=1$,
and $gcd(\bar{a}, \bar{b}, \bar{c})=1$ otherwise.

Let $I=(\bar{a},\bar{b},\bar{c})$ be the ideal generated by 
$\bar{a},\bar{b},\bar{c}$ in $S$. 
If $gcd(\bar{a}, \bar{b}, \bar{c}) = 1$ then there exists $t_{0}\in \NN$ 
such that $\forall t\ge t_{0}$, $I_{t}= (u_{0},u_{1})^{t}$
and in that case, $\dim_\RR (S/I)_t = 0$ for $t$ sufficiently large. 
It follows that the Hilbert polynomial $HP_{S/I}$ of $S/I$ 
is the zero polynomial. 

For the second case, namely if $gcd(\bar{a}, \bar{b}, \bar{c}) = u_0$, 
since $\gcd(a,b,c) = 1$ then the polynomials $\bar{a}/u_0$, $\bar{b}/u_0$ and $\bar{c}/u_0$
have $\gcd$ equal to 1.
Hence there exists $t_{0}\in \NN$  such that $\forall t\ge t_{0}$,
$I_{t}= u_{0}\, (u_{0},u_{1})^{t-1}$.
In this case $\dim_\RR (S/I)_t = 1$ for $t$ sufficiently large,
and it follows that the Hilbert polynomial $HP_{S/I}$  
is the constant polynomial equal to 1. 

Then the exact sequence 
\[
	0 \rightarrow I \rightarrow S \rightarrow S/I \rightarrow 0
\]
implies that 
\begin{equation}\label{eq:hp}
	HP_I(t) = HP_S(t) - HP_{S/I}(t) = \binom{t+1}{1} - e,
\end{equation}
where $HP_M$ is the Hilbert polynomial of the module $M$.

By the Graded Hilbert Syzygy Theorem, we get a resolution of the form
\[
	0 \longrightarrow S(- d_1)\oplus \dots \oplus S(- d_L) \xrightarrow{\;\,\lambda\;\,}
	S(-d_{a}) \oplus S(-d_{b}) \oplus S(-d_c) \longrightarrow I \longrightarrow 0.
\]
Notice that this resolution is not necessarily minimal. Since this is an exact sequence, then the Hilbert polynomial of the middle term is the sum of the other two 
Hilbert polynomials, and applying \eqref{eq:hp} we get 
\[
	3t - (d_a + d_b + d_c) + 3 = (t- d_1 + 1) + \cdots + (t - d_L + 1) + (t+1) -e.
\]
It follows that $L = 2$ which proves \emph{(i)}. Furthermore, we have that 
the degrees $d_1$ and $d_2$ of the syzygies satisfy $ d_1+d_2 = d_a + d_b + d_c - e$.

The matrix $\Lambda$ representing $\lambda$ is a $3\times 2$ matrix
\[ 
	\left( \begin{array}{cc}
	\bar{A}_1 & \bar{A}_2 \\
	\bar{B}_1 & \bar{B}_2 \\
	\bar{C}_1 & \bar{C}_2 
	\end{array} \right)
\] 
the first column corresponding to the generator of degree
$d_{1}$ and the second of degree $d_{2}$.
These two syzygies correspond to vectors of polynomial coefficients
of degree $\mu = d_{1} -\min(d_{a},d_{b},d_{c})$ and $\nu = d_2 - \min (d_a,d_b, d_c)$. By Definition \ref{def:Fm}, 
$\min(d_{a},d_{b},d_{c}) = n + \min\bigl(1, F_{\Delta}(\sigma_{1}) ,F_{\Delta}(\sigma_{2}) \bigr)= n+m$, and also $d_a + d_b + d_c = 3n + F_\Delta + 1$.   
Let us assume that $d_1\leq d_2$, then $\mu$ is the smallest degree of the coefficient vector of a syzygy of $(\bar{a}, \bar{b}, \bar{c})$, and $\nu = n-\mu + 1 + F_{\Delta}-e -2\,m$.

\smallskip
 
By exactness, the two columns of $\Lambda$ generate 
$\Syz(\bar{a},\bar{b},\bar{c})$. 
The dehomogenization (by setting $u_0=1$) of the syzygies in $\Syz(\bar{a},\bar{b},\bar{c})$
leads to syzygies of $(a,b,c)$ over $\RR[u_1]$. In particular, it is 
straightforward to show that
the dehomogenization $({A}_{i},{B}_{i},{C}_{i})$ of 
$(\bar{A}_{i},\bar{B}_{i},\bar{C}_{i})$ for $i=1,2$ generate $Z = \Syz(a,b,c)$ as a module over $\RR[u_1]$. This proves \emph{(ii)}.

By Claim \ref{claim:Zk}, the space $Z_{k}$ is in correspondence with the space of homogeneous syzygies 
of degree $n+k$, which is spanned by the multiples of degree $n+k$ of 
$(\bar{A}_{1},\bar{B}_{1},\bar{C}_{1})$ and 
$(\bar{A}_{2},\bar{B}_{2},\bar{C}_{2})$.
Therefore,
\begin{eqnarray*}
	\dim Z_{k} &=& (n+k-d_{1}+1)_{+} + (n+k-d_{2}+1)_{+}\\ 
 	&=& (k-\mu - m +1)_{+} + (k-\nu-m+1)_{+}
\end{eqnarray*}
with $\nu = n-\mu + 1 + F_{\Delta}-e -2\,m$.
This proves \emph{(iii)}.

The point \emph{(iv)} is a consequence of Hilbert-Burch theorem.
More details on this proof can be found in \cite[Chapter 6, \S\,4.17]{usingalggeom}.
\end{proof}
\begin{definition}\label{def:mu}
  For an interior edge $\tau$ in the topological surface $\cM$ shared by the faces
  $\sigma_{1}$, $\sigma_{2}$ with gluing data $[a_{\tau},b_{\tau},c_{\tau}]$, we denote by
	$\cN_{\tau}$ the topological surface formed by the cells $\sigma_{1}$,
	$\sigma_{2}$ glued along the edge $\tau$ with the same gluing data.
	Let $\mu_\tau$ be the smallest coefficient 
	degree among the two generators
	of the module $Z = \Syz(a_{\tau},b_{\tau},c_{\tau})$.
	Let $\nu_{\tau}= n_{\tau}-\mu_{\tau} + 1 + F_{\Delta}(\tau)-e_{\tau} -2\,m_{\tau}$
	denote the complementary degree, where
	$n_{\tau}=\max(\deg(a_{\tau}),\deg(c_{\tau}),\deg(c_{\tau})) $,
	$m_{\tau}=\min(F_{\Delta}(\sigma_{1}), F_{\Delta}(\sigma_{2}))$,
	$e_{\tau}= \min( n_{\tau} + 1-\deg(a_{\tau}), 
 	n_{\tau} + F_{\Delta}(\sigma_{2})-\deg(b_{\tau}), 
 	n_{\tau} + F_{\Delta}(\sigma_{1})-\deg(c_{\tau}))$
	$F_{\Delta}(\tau)= F_{\Delta}(\sigma_{1})+ F_{\Delta}(\sigma_{2})$.
	The corresponding basis of the syzygy module $Z$ of
	$[a_{\tau},b_{\tau},c_{\tau}]$ is called the $\mu_{\tau}$-basis. 
\end{definition}
This construction allows us to determine the dimension of $S^{1}_{k}(\cN_{\tau})$.
\begin{proposition}\label{prop:dimN}
	For $F_{\Box}$ (resp. $F_{\Delta}$) the number of
	rectangles (resp. triangles) of $\cN_{\tau}$, 
	\small
 	\begin{eqnarray*}
		\dim \cS_{k}(\cN_{\tau}) &=& 
		1+ (k^{2}-1) F_{\Box} + \frac{1}{2}(k^{2} - k ) F_{\Delta} + d_{\tau}(k)\\
 		\mbox{where\ \ \ }\ d_{\tau}(k)&=&
 		(k - \mu_{\tau} -m_{\tau} + 1)_+ + (k - n_{\tau} + \mu_{\tau} + m_{\tau} - 
 		F_{\Delta} + e_{\tau})_+
	\end{eqnarray*}
\end{proposition}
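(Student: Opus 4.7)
The plan is to parametrize $\cS_k(\cN_\tau)$ explicitly and reduce the dimension count to that of the syzygy module $Z_k$ described in Lemma~\ref{lm:syzygy}.

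I would start by expanding each face polynomial in powers of its edge-normal coordinate: $f_1 = f_1^{(0)}(u_1) + v_1 f_1^{(1)}(u_1) + v_1^2 E_1(u_1,v_1)$ and $f_2 = f_2^{(0)}(v_2) + u_2 f_2^{(1)}(v_2) + u_2^2 E_2(u_2,v_2)$. The $C^0$ condition \eqref{eq:edgecond0} forces $f_1^{(0)} = f_2^{(0)}$, so setting $c_0 := f_1^{(0)}(0)$ and $A := (f_1^{(0)})'$ recovers this common polynomial as $c_0 + \int_0^{\cdot} A(t)\, dt$. Taking $B := f_2^{(1)}$ and $C := -f_1^{(1)}$ and clearing the denominator $c_\tau(u_1)$, the derivative condition \eqref{eq:edgecond} is exactly the syzygy identity $a_\tau A + b_\tau B + c_\tau C = 0$. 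Conversely, any tuple $(c_0,(A,B,C),E_1,E_2)$ with $(A,B,C)\in\Syz(a_\tau,b_\tau,c_\tau)$ produces a spline via \eqref{eq:edgeint1}--\eqref{eq:edgeint2}, so this correspondence is a linear bijection.

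I would then impose the degree bounds: $f_1 \in \cR_k(\sigma_1)$ forces $\deg A \le k-1$ and $\deg C \le k - F_\Delta(\sigma_1)$, and $f_2 \in \cR_k(\sigma_2)$ forces $\deg B \le k - F_\Delta(\sigma_2)$, which are precisely the conditions defining $Z_k$ in Definition~\ref{def:Zk}. The residual $v_1^2 E_1 \in \cR_k(\sigma_1)$ leaves $E_1$ free in a space of dimension $(k+1)(k-1) = k^2-1$ when $\sigma_1$ is a rectangle (bidegree at most $(k,k-2)$) and of dimension $\binom{k}{2} = (k^2-k)/2$ when $\sigma_1$ is a triangle (total degree at most $k-2$); the same count holds for $E_2$.

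Summing the dimensions of the four direct-sum pieces yields $1 + \dim Z_k + (k^2-1)F_\Box + \tfrac12(k^2-k)F_\Delta$, and Lemma~\ref{lm:syzygy}(iii) identifies $\dim Z_k$ with $d_\tau(k)$, giving the stated formula. The only delicate point is the bookkeeping of the degree bounds, to ensure that the parametrization really surjects onto $\cS_k(\cN_\tau)$ and that the inherited constraints on $(A,B,C)$ match Definition~\ref{def:Zk} on the nose; once that matching is in place, the rest is a direct dimension addition.
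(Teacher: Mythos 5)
Your proof is correct and follows essentially the same route as the paper's: the paper also counts dimensions via the parametrization \eqref{eq:edgeint1}--\eqref{eq:edgeint2}, identifying the linear-in-$v_1$ (resp.\ $u_2$) terms with $Z_k$ and leaving the constant term and the parts divisible by $v_1^2$, $u_2^2$ free. Your write-up merely supplies the bookkeeping (degree bounds matching Definition~\ref{def:Zk}, and the counts $(k+1)(k-1)$ and $\binom{k}{2}$ for $E_1,E_2$) that the paper leaves implicit.
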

\begin{proof}
Since the only constraints satisfied by the spline functions in
$\cS(\cN_{\tau})$ are the gluing conditions along the edge $\tau$,
the number of linearly independent splines on $\cN_{\tau}$ can be easily counted
by using (\ref{eq:edgeint1}) and (\ref{eq:edgeint2}), and the linearly independent
terms in the Bernstein-B\'ezier 
representation of the polynomials $f_1$ and $f_2$ that conform a spline. 

The gluing data and the smoothness along the edge $\tau$ impose conditions on the 
terms in $f_1$ and $f_2$ which are linear in $v_1$ and $v_2$, respectively. 
Thus, the dimension of the space of splines on $\cN_{\tau}$ of degree exactly $1$ in $v_1$ and
$v_2$, is given by $\dim Z_{k} = d_{\tau}(k)$.
The formula for $d_\tau$ follows from Lemma \ref{lm:syzygy}, by 
considering $Z = \Syz (a_{\tau}, b_{\tau}, c_{\tau})$, where $a_{\tau},b_{\tau},c_{\tau}$ define the gluing data
along $\tau$.
\end{proof}

\subsection{Separation of vertices}\label{sec:sepvert}

We analyze now the separability of the spline functions on an edge,
that is when the Taylor map at the vertices separate the spline functions.

Let $f=(f_{1},f_{2}) \in \cR(\sigma_{1})\oplus \cR(\sigma_{2})$ of the
form $f_{i}(u_{i},v_{i})= p_{i}
+ q_{i}\, u_{i} + \tilde{q}_{i}\, v_{i} + s_{i} \, u_{i}v_{i}+ r_{i}
\,u_{i}^{2} + \tilde{r}_{i} \, v_{i}^{2}+ \cdots$. Then
\[ 
	T_{\gamma}(f)=[p_{1}, q_{1},\tilde{q}_{1},s_{1},p_{2},
	q_{2},\tilde{q}_{2},s_{2}].
\]
If $f=(f_{1},f_{2})\in \cS^{1}_{k}(\cN_{\tau})$, then 
taking the Taylor expansion of the gluing condition 
\eqref{eq:edgecond}  centered at $u_{1}=0$ yields
\begin{eqnarray}
	\tilde{q}_{1} + s_{1}\, u_{1} &=& (\ma(0) +\ma'(0) u_{1}+ \cdots)
	\, ( \tilde{q}_{2} + 2\, \tilde{r}_{2}\, u_{1} + \cdots) \label{eq:exptaylor}\\
	&&+ (\mb(0) +\mb'(0) u_{1}+ \cdots) 
	\, ( q_{2} + s_{2}\, u_{1} +\cdots )\nonumber
\end{eqnarray}
Combining \eqref{eq:exptaylor} with \eqref{eq:edgecond0} yields 
\begin{eqnarray*}
	p_{1} & = & p_{2} \\ 
 	q_{1} & = & \tilde{q}_{2} \\
 	r_{1} & = & \tilde{r}_{2} \\
	\tilde{q}_{1} & = & \ma(0) \, \tilde{q}_{2} + \mb(0)\, q_{2}\\
	s_{1} &=& 2\,\ma (0)\, r_{2} + \mb (0) \, s_{2}
 	+ \ma'(0) \, \tilde{q}_{2}+ \mb'(0) \, q_{2}. 
\end{eqnarray*}
Let $\cH(\gamma)$ be the linear space spanned
by the vectors $[p_{1}, q_{1},\tilde{q}_{1},s_{1},p_{2}, q_{2},\tilde{q}_{2},s_{2}]$,
which are solution of these equations.

If $\ma(0)\neq 0$, it is a space of dimension $5$ otherwise its
dimension is $4$. Thus $\dim \cH(\gamma)=5 -\cross_{\tau}(\gamma)$.
\begin{proposition}\label{prop:sepg}
	For $k\ge \nu_{\tau}+m_{\tau}+1$, $T_{\gamma}(\cS^{1}_{k}(\cN_{\tau}))=\cH(\gamma)$. Its
	dimension is $\dim  T_{\gamma}(\cS^{1}_{k}(\cN_{\tau})) = 5 -\cross_{\tau}(\gamma).$ 
\end{proposition}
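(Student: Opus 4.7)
The inclusion $T_{\gamma}(\cS^{1}_{k}(\cN_{\tau})) \subseteq \cH(\gamma)$ is immediate: every spline $f \in \cS^{1}_{k}(\cN_{\tau})$ satisfies the gluing relations \eqref{eq:edgecond0} and \eqref{eq:edgecond}, and their Taylor expansion \eqref{eq:exptaylor} at $u_{1}=0$ gives exactly the linear relations defining $\cH(\gamma)$. It remains to establish surjectivity and to match the dimension.

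For the opposite inclusion, the plan is to exploit the parameterization of splines on $\cN_{\tau}$ given by \eqref{eq:edgeint1}--\eqref{eq:edgeint2}: every spline is determined by a constant $c_{0}\in\RR$, a syzygy $(A,B,C)\in Z_{k}$, and polynomials $E_{1},E_{2}$ which do not contribute to $T_{\gamma}$. The Taylor coordinates at $\gamma$ read $p=c_{0}$, $q_{1}=\tilde{q}_{2}=A(0)$, $q_{2}=B(0)$, $\tilde{q}_{1}=-C(0)$, $s_{1}=-C'(0)$, $s_{2}=B'(0)$. Hence surjectivity of $T_{\gamma}$ onto $\cH(\gamma)$ reduces to showing that the linear map $Z_{k}\to\RR^{4}$, $(A,B,C)\mapsto\bigl(A(0),B(0),B'(0),-C'(0)\bigr)$, hits an appropriate target.

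Let $(A_{1},B_{1},C_{1})$ and $(A_{2},B_{2},C_{2})$ be the $\mu_{\tau}$-basis of $Z$ provided by Lemma~\ref{lm:syzygy}, of coefficient degrees $\mu_{\tau}$ and $\nu_{\tau}$. The hypothesis $k\ge\nu_{\tau}+m_{\tau}+1$ ensures that for any $(p_{1},p_{2},q_{1},q_{2})\in\RR^{4}$, setting $P_{i}=p_{i}+q_{i}u_{1}$ yields a syzygy $P_{1}(A_{1},B_{1},C_{1})+P_{2}(A_{2},B_{2},C_{2})$ lying in $Z_{k}$. The resulting $4\times 4$ evaluation matrix $M$ is block lower-triangular, with diagonal $2\times 2$ blocks of determinants
\[
(A_{1}B_{2}-A_{2}B_{1})(0)=c(0), \qquad -(B_{1}C_{2}-B_{2}C_{1})(0)=-a(0),
\]
by the Hilbert--Burch identity of Lemma~\ref{lm:syzygy}(iv); hence $\det M=-c(0)\,a(0)$.

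In the non-crossing case $\cross_{\tau}(\gamma)=0$, i.e.\ $a(0)\neq 0$, the matrix $M$ is invertible, so $(A(0),B(0),B'(0),-C'(0))$ can be prescribed arbitrarily; together with the independent freedom $c_{0}\mapsto p$ this realizes all five free parameters of $\cH(\gamma)$. In the crossing case $a(0)=0$, the additional relation $s_{1}=\mb(0)s_{2}+\ma'(0)\tilde{q}_{2}+\mb'(0)q_{2}$ imposed on $\cH(\gamma)$ is automatically satisfied by any syzygy, as seen by differentiating $aA+bB+cC=0$ at $u_{1}=0$ and using $a(0)=0$ together with $C(0)=-\mb(0)B(0)$; it therefore suffices to surject onto the three-dimensional subspace parametrized by $(A(0),B(0),B'(0))$. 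The top $2\times 2$ block of $M$ still has determinant $c(0)\neq 0$ and the pair $(B_{1}(0),B_{2}(0))$ is nonzero (otherwise $c(0)=0$), which gives the required rank $3$. The main obstacle is organizing the degree bookkeeping for $P_{1},P_{2}$ and invoking Hilbert--Burch to identify the rank-determining minors of $M$ with the key quantities $c(0)$ and $a(0)$.
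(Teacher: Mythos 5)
Your proposal is correct and follows essentially the same route as the paper: both parametrize $T_{\gamma}(\cS^{1}_{k}(\cN_{\tau}))$ by $c_{0}$ together with syzygies $P_1(A_1,B_1,C_1)+P_2(A_2,B_2,C_2)$ with $\deg P_i\le 1$ (allowed since $k\ge\nu_{\tau}+m_{\tau}+1$), and both extract the rank from the Hilbert--Burch identities $A_1B_2-A_2B_1=c$ and $B_1C_2-B_2C_1=a$ evaluated at $0$, splitting on whether $a(0)$ vanishes. Your explicit check in the crossing case that the forced value of $C'(0)$ satisfies the $\cH(\gamma)$-relation is a correct but redundant extra step, since the inclusion $T_{\gamma}(\cS^{1}_{k}(\cN_{\tau}))\subseteq\cH(\gamma)$ already guarantees it and the paper instead concludes by the dimension count $\dim\cH(\gamma)=4$.
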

\begin{proof}
Let $G(\gamma)=T_{\gamma}(\cS^{1}_{k}(\cN_{\tau}))$. By construction $G(\gamma)\subset
\cH(\gamma)$. We are going to prove that for $k\ge \nu_{\tau}+m_{\tau}+1$,
$G(\gamma)$ and $\cH(\gamma)$ have the same dimension and thus are equal.

By the decompositions \eqref{eq:edgeint1} and \eqref{eq:edgeint2},
the elements of $T_{\gamma} (\cS^{1}_{k}(\cN_{\tau}))$ are of the form
\[
	[c_{0},A(0),-C(0),-C'(0),c_{0},B(0),A(0),B'(0)]
\]
where $c_{0}\in \RR$ and $[A,B,C]\in Z_{k}$.
By Lemma \ref{lm:syzygy}, an element of $Z_{k}$ is of the form
$[A,B,C] = P \, [A_{1},B_{1},C_{1}] +
Q\, [A_{2}, B_{2}, C_{2}]$ with $P,Q \in \RR[u]$, $\deg(P)\le k-\mu_{\tau}-m_{\tau}$ and $\deg(Q)\le k-\nu_{\tau}-m_{\tau}$.
By removing the repeated columns, reordering and changing some signs, we see that $G(\gamma)= T_{\gamma}(\cS^{1}_{k}(\cN_{\tau}))$ is
isomorphic to the space spanned by the elements
\begin{equation}\label{eq:ImHg}
	\left[
	\begin{array}{c}
 	f_{1}(\gamma)\\
 	\partial_{u_{1}}f_{1}(\gamma)\\ 
 	\partial_{u_{2}}f_{2}(\gamma)\\ 
 	-\partial_{v_{1}}f_{1}(\gamma)\\
	\partial_{u_{2}} \partial_{v_{2}}f_{2}(\gamma)\\ 
	-\partial_{u_{1}} \partial_{v_{1}}f_{1}(\gamma)\\ 
	\end{array}
	\right]
	=
 	\left[
	\begin{array}{ccccc}
 		1 & 0 &0 &0&0\\
 		0 & A_{1}(0) & A_{2}(0)& 0 &0\\
 		0 & B_{1}(0) & B_{2}(0)& 0 &0\\
 		0 & C_{1}(0) & C_{2}(0)& 0 &0\\
 		0 & B_{1}'(0) & B_{2}'(0) & B_{1}(0) & B_{2}(0)\\
 		0 & C_{1}'(0) & C_{2}'(0) & C_{1}(0) & C_{2}(0)\\
	\end{array}
	\right]
	\left[
	\begin{array}{c}
 		c_{0}\\ 
 		P(0)\\ 
 		Q(0)\\ 
 		P'(0)\\ 
 		Q'(0)\\ 
	\end{array}
	\right]
\end{equation}
for $P,Q \in \RR[u]$ with $\deg(P)\le k-\mu_{\tau}-m_{\tau}$ and $\deg(Q)\le
k-\nu_{\tau}-m_{\tau}$.
Let us assume that
$k\ge \nu_{\tau}+m_{\tau}+1$ so that $k-\mu_{\tau} -m_{\tau}\ge k-\nu_{\tau}-m_{\tau}\ge 1$.

As $A_{1} B_{2}-A_{2} B_{1}=c$ and $A_{1}(0) B_{2}(0)-A_{2}(0)
B_{1}(0)=c(0)\neq 0$, we deduce
 that $[B_{1}(0),B_{2}(0)]\neq [0,0]$ and that $\dim G(\gamma)\ge 4$.

If $\cross_{\tau}(\gamma)=0$, then
$a(0)=B_{1}(0)C_{2}(0)- B_{2}(0)C_{1}(0)\neq 0$ and
$\dim G(\gamma)= 5 = 5-\cross_{\tau}(\gamma) = \dim \cH(\gamma)$.
 
If $\cross_{\tau}(\gamma)=1$, then $a(0)=B_{1}(0)C_{2}(0)-
B_{2}(0)C_{1}(0)=0$ and $\dim G(\gamma)= 4= 5-\cross_{\tau}(\gamma) = \dim \cH(\gamma)$.

In both cases, we have $\dim G(\gamma)= \dim \cH(\gamma)$, which implies that
$G(\gamma)=\cH(\gamma)$. This completes the proof of the proposition.
\end{proof}
If $\gamma'$ is the other end point of $\tau$, we have a Taylor map
for each $\gamma$ and $\gamma'$, that we join together.
Let
\begin{eqnarray}\label{eq:Tgg}
	T_{\gamma,\gamma'}:\cR(\sigma_{1})\oplus
	\cR(\sigma_{2})&\rightarrow& \cR^{\sigma_{1}}(\gamma)\oplus \cR^{\sigma_{2}}(\gamma) \oplus 
	\cR^{\sigma_{1}}(\gamma')\oplus \cR^{\sigma_{2}}(\gamma') \\
	f=(f_{1},f_{2}) & \mapsto & (T_{\gamma}(f), T_{\gamma'}(f)) \nonumber
\end{eqnarray}
and let $G(\tau)=T_{\gamma,\gamma'}(\cS^{1}_{k}(\cN_{\tau}))$.
\begin{proposition}\label{prop:Hgg}
	For $k\ge \nu_{\tau}+m_{\tau}+4$, we have
	$T_{\gamma,\gamma'}(\cS^{1}_{k}(\cN_{\tau}))=(\cH(\gamma)$, $\cH(\gamma'))$ and
	\[
		\dim T_{\gamma,\gamma'}(\cS^{1}_{k}(\cN_{\tau})) = 10 -\cross_{\tau}(\gamma) 
		-\cross_{\tau}(\gamma').
	\]
\end{proposition}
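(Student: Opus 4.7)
The inclusion $T_{\gamma,\gamma'}(\cS^{1}_k(\cN_\tau)) \subseteq \cH(\gamma) \oplus \cH(\gamma')$ is immediate from Proposition~\ref{prop:sepg} applied at each endpoint, which is available since $k \geq \nu_\tau + m_\tau + 4 \geq \nu_\tau + m_\tau + 1$. My plan is to establish the reverse inclusion by a rank--nullity argument: it will be enough to prove that the restriction of $T_{\gamma'}$ to $\ker T_\gamma \cap \cS^{1}_k(\cN_\tau)$ still surjects onto $\cH(\gamma')$. Combined with the surjectivity of $T_\gamma$, this immediately yields
\[
\dim T_{\gamma,\gamma'}(\cS^{1}_k(\cN_\tau)) = \dim \cH(\gamma) + \dim \cH(\gamma') = 10 - \cross_\tau(\gamma) - \cross_\tau(\gamma').
\]

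To describe $\ker T_\gamma$, I would reuse the $\mu$-basis parameterization $[A,B,C] = P[A_1,B_1,C_1] + Q[A_2,B_2,C_2]$ together with the integral formulas \eqref{eq:edgeint1}--\eqref{eq:edgeint2}. Reading off the matrix in \eqref{eq:ImHg}, the condition $T_\gamma(f) = 0$ forces $c_0 = 0$, $P(0) = P'(0) = 0$ and $Q(0) = Q'(0) = 0$ (with one extra kernel dimension in the crossing case at $\gamma$, which only enlarges the kernel). Writing $P = u_1^2 \tilde P$ and $Q = u_1^2 \tilde Q$, the residual freedom consists of $\tilde P, \tilde Q$ with $\deg \tilde P \leq k - \mu_\tau - m_\tau - 2$ and $\deg \tilde Q \leq k - \nu_\tau - m_\tau - 2$; the bound $k \geq \nu_\tau + m_\tau + 4$ guarantees that both of these degree caps are at least $2$.

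For such an $f$ one has $f(\gamma) = 0$, and $T_{\gamma'}(f)$ is governed by the four Hermite values $P(1), P'(1), Q(1), Q'(1)$ --- which feed into the analogue of \eqref{eq:ImHg} centered at $\gamma'$ --- together with the extra scalar $f(\gamma') = \int_0^1 t^2 \bigl(\tilde P(t) A_1(t) + \tilde Q(t) A_2(t)\bigr)\,dt$. The two Hermite conditions at $u_1 = 1$ on each of $\tilde P$ and $\tilde Q$ are realizable because each polynomial has degree at least $2$. Once these four conditions are fixed, the residual space of admissible $\tilde Q$ has dimension $k - \nu_\tau - m_\tau - 3 \geq 1$, precisely by the hypothesis $k \geq \nu_\tau + m_\tau + 4$, and this single extra degree of freedom is what I would use to hit any prescribed value of the integral and hence of $f(\gamma')$.

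The main technical obstacle is verifying the non-triviality of the integral functional $\tilde Q \mapsto \int_0^1 t^2 (t-1)^2 R(t) A_2(t)\,dt$, where $\tilde Q = (u_1-1)^2 R$ parameterizes the residual subspace. My expected argument is that at least one of $A_1, A_2$ is a non-zero polynomial, since otherwise $(0, B_i, C_i)$ would be a syzygy of $(b,c)$, contradicting $\gcd(a,b,c) = 1$ combined with the minimality of the $\mu$-basis from Lemma~\ref{lm:syzygy}; hence $t^2(t-1)^2 A_i(t)$ is a non-zero polynomial, which cannot be orthogonal on $[0,1]$ to the whole positive-dimensional space of admissible $R$. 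If $A_2 \equiv 0$ one uses $\tilde P$ and $A_1$ as a symmetric fallback, which is why the bound involves $\nu_\tau$ (the larger of the two $\mu$-basis degrees) rather than $\mu_\tau$. Finally, if $\gamma'$ is a crossing vertex, one Hermite output is simply removed, making surjectivity onto the smaller $\cH(\gamma')$ strictly easier.
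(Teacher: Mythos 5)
Your strategy is, up to packaging, the same as the paper's. The paper proves the reverse inclusion by exhibiting $(\cH(\gamma),0)\subset G(\tau)$ and, by symmetry, $(0,\cH(\gamma'))\subset G(\tau)$; you prove that $T_{\gamma'}$ restricted to $\ker T_{\gamma}\cap\cS^1_k(\cN_\tau)$ surjects onto $\cH(\gamma')$ and invoke rank--nullity. These are logically interchangeable, and the underlying construction is identical in both cases: parameterize the relevant splines by $(c_0,P,Q)$ through the $\mu$-basis and the formulas \eqref{eq:edgeint1}--\eqref{eq:edgeint2}, impose Hermite data ($P,P',Q,Q'$) at $u_1=0$ and $u_1=1$ --- which is what consumes degree $4$ and produces the hypothesis $k\ge \nu_\tau+m_\tau+4$ --- and use the leftover coefficient(s) to hit the value condition $\int_0^1 A = \int_0^1(PA_1+QA_2)$. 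The paper even writes down the same cubic Hermite polynomials plus a $u^2(1-u)^2$ correction term that you arrive at.

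The one step where you attempt more detail than the paper is the non-triviality of the integral functional, and there your argument does not hold up. At the threshold $k=\nu_\tau+m_\tau+4$ the residual space of admissible $R$ is one-dimensional (constant multiples of $u^2(1-u)^2$), so the functional reduces to the single number $\int_0^1 t^2(1-t)^2A_2(t)\,dt$; a nonzero polynomial can certainly be orthogonal to a one-dimensional space of test functions, so "nonzero, hence not orthogonal to a positive-dimensional space" is not a valid inference --- it would only be valid if the test space contained a polynomial with the same sign pattern as (or a nonzero multiple of) $t^2(1-t)^2A_2$ itself. The fallback to $\tilde P$ and $A_1$ suffers from the same defect: both $\int_0^1 u^2(1-u)^2A_1$ and $\int_0^1 u^2(1-u)^2A_2$ could a priori vanish for particular gluing data even though $(A_1,A_2)\neq(0,0)$. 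In fairness, the paper's own proof simply asserts that $P,Q$ with the prescribed Hermite data and prescribed integral "can be found", with the two free coefficients $p_2,q_2$ playing the role of your residual $R$, and it does not argue the corresponding non-degeneracy either; so you have correctly isolated the delicate point of the proof, but the justification you offer for it is not correct and would need either a genuine argument that $(\int_0^1 u^2(1-u)^2A_1,\int_0^1 u^2(1-u)^2A_2)\neq(0,0)$ or a slightly larger degree bound under which orthogonality to the whole residual space forces $A_1=A_2=0$.
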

\begin{proof}
By a change of coordinates, we can assume that the
coordinates of $\gamma$ (resp. $\gamma'$) are $(0,0)$
(resp. $(1,0)$) in $\sigma_{1}$ and $(0,0)$
(resp. $(0,1)$) in $\sigma_{2}$.

Similarly to the proof of the previous proposition, $T_{\gamma'}(\cS^{1}_{k}(\cN_{\tau}))$ is
spanned by the vectors
\[
	[c_{0} + \int_{0}^{1}A(u) du, A(1),-C(1),C'(1),c_{0}+\int_{0}^{1}A(u) du,B(1),A(1),B'(1)]
\]
for $c_{0}\in \RR$ and $[A,B,C] = P \, [A_{1},B_{1},C_{1}] +
Q\, [A_{2}, B_{2}, C_{2}] \in Z_{k}$
with $\deg(P)\le k-\mu_{\tau}-m_{\tau}$ and $\deg(Q)\le k-\nu_{\tau}-m_{\tau}$.

For $k\ge \nu_{\tau}+m_{\tau}+4$,  we can find polynomials 
$P =p_{0} (1-3u^{2}+2u^{3})+ p_{1}(u-2u^{2}-u^{3}) + p_{2} u^{2}(1-u)^{2}$,
$Q=q_{0} (1-3u^{2}+2u^{3})+ q_{1}(u-2u^{2}-u^{3}) + q_{2} u^{2}(1-u)^{2}$,
of degree $\le 4$ such that
$P(0)=p_{0}, P'(0)=p_{1}$, $Q(0)=q_{0}$, $Q'(0)=q_{1}$,  
$P(1)=0, P'(1)=0$, $Q(1)=0$, $Q'(1)=0$ and 
$\int_{0}^{1}(P A_{1}+ Q A_{2})(u)du = -c_{0}$.

This implies that $(\cH(\gamma),0) \subset G(\tau)$.
By symmetry, we also have $(0,\cH(\gamma')) \subset G(\tau)$. 
By construction $G(\tau)\subset (\cH(\gamma)$, $\cH(\gamma'))$,
therefore we have $G(\tau)= (\cH(\gamma), \cH(\gamma'))$
and $\dim G(\tau)= \dim \cH(\gamma) + \dim \cH(\gamma')$.
We deduce the dimension formula from Proposition \ref{prop:sepg}.
\end{proof}
\begin{definition} The separability $\ms(\tau)$ of the edge $\tau$ is the minimal
  $k$ such that $T_{\gamma,\gamma'}(\cS^{1}_{k}(\cN_{\tau}))=
	(T_{\gamma}(\cS^{1}_{k}(\cN_{\tau})), T_{\gamma'}(\cS^{1}_{k}(\cN_{\tau})))$.
\end{definition}
\begin{remark} The bound $\nu_{\tau}+m_{\tau}+4\ge \ms(\tau)$ is not necessarily
	the minimal degree of separability. Separability can be attained  as
 	soon as $d_{\tau}(k)\ge 9 - \cross_{\tau}(\gamma) -\cross_{\tau}(\gamma').$
\end{remark}
 
\subsection{Decompositions and dimension} \label{sec:polybasis}
Let $\tau\in\cM_{1}$ be an interior edge $\tau$ shared by the cells
$\sigma_{1}, \sigma_{2}\in \cM_{2}$. Let $K_{1}= (v_{1}^{2})\cap \cR_{k}(\sigma_{1})$
and $K_{2}= (u_{2}^{2})\cap \cR_{k}(\sigma_{2})$
be the polynomials of $\cR _{k}(\sigma_{1})$ (resp. $\cR _{k}(\sigma_{2})$) divisible by
$v_{1}^{2}$ (resp. $u_{2}^{2}$).
Let $L$ be
the subspace of polynomials of  $\cR_{k}({\sigma_{1}})\oplus
\cR_{k}({\sigma_{2}})$ spanned by the Bernstein basis functions on
$\sigma_{1}$ and $\sigma_{2}$, which are not divisible by
$v_{1}^{2}$ or $u_{2}^{2}$ and let  $\pi_{L}$ be the projection of $\cR_{k}({\sigma_{1}})\oplus \cR_{k}({\sigma_{2}})$
on $L$ along $(K_{1},0)\oplus (0,K_{2})$.
The functions in $L$ are said to have {\em their support along} $\tau$.
By construction, we have $\cR_{k}({\sigma_{1}})\oplus \cR_{k}({\sigma_{2}})= (K_{1},0) \oplus 
(0,K_{2})\oplus L$.
The elements of $(K_{1},K_{2})$ are obviously in $\cS_{k}^{1}(\cN_{\tau})$ since they vanish at the order $1$
along $\tau$.

Let $W_{k}(\tau)= \pi_{L}(\Theta_{\tau}(Z_{k}))$ where
$\Theta_{\tau}$ is defined in \eqref{eq:deftheta}. 
Notice that $W_{k}(\tau)\subset \cS_{k}^{1}(\cN_{\tau})$
since $\ker \pi_{L}\subset \cS_{k}^{1}(\cN_{\tau})$. Moreover, 
since $\ker \pi_{L}$ does not intersect $\Theta_{\tau}(Z_{k})$ and
$\Theta_{\tau}$ is injective, the spaces 
$W_{k}(\tau)$, $\Theta_{\tau}(Z_{k})$ and $Z_{k}$ have the same dimension.
Therefore,  we have
$\dim(W_{k}(\tau))=d_{\tau}(k)$ and 
$W_{k}(\tau) \neq \{0\}$ when $k\geq \mu_{\tau} + m$
(Lemma \ref{lm:syzygy} (iii)). 

From the relations \eqref{eq:edgeint1} and \eqref{eq:edgeint2},
we deduce the following decomposition:
\begin{equation}\label{prop:S1N}
	\cS_{k}^{1}(\cN_{\tau}) = (K_{1},0) \oplus (0, K_{2}) \oplus \RR\, \ub \oplus W_{k}(\tau)
\end{equation}
where $\ub=\pi_{L}((1,1))$.
The sum of these spaces is direct, since the supports of the functions
of each space do not intersect.

The map $T_{\gamma,\gamma'}$ defined in \eqref{eq:Tgg} induces the
exact sequence
\[ 
	0 \rightarrow \cK_{k}(\tau) \rightarrow \cS_{k}^{1}(\cN_{\tau}) 
	\stackrel{T_{\gamma,\gamma'}}{\longrightarrow} G(\tau) \rightarrow 0
\]
where $\cK_{k}(\tau)= \ker T_{\gamma,\gamma'}$
and $G(\tau)=T_{\gamma,\gamma'}(\cS_{k}^{1}(\cN_{\tau}))$.
It is clear that $(K_{1},K_{2})\subset \cK_{k}(\tau)$.
\begin{definition}\label{def:edgespline}
	For an interior edge $\tau\in \cM_{1}^{o}$, let 
	$\cE_{k}(\tau)=\ker (T_{\gamma,\gamma'})\cap W_{k}(\tau)$ be the
	set  of splines in $\cS_{k}^{1}(\cN_{\tau})$ with their 
	support along $\tau$ and with vanishing 
	Taylor expansions at $\gamma$ and $\gamma'$.
	For a boundary edge $\tau'=(\gamma,\gamma')$, which belongs to a face $\sigma$, we 
	also define $\cE_{k}(\tau')$ as the set of elements of $\cR_{k}(\sigma)$
	with their  support along $\tau'$ and with vanishing 
	Taylor expansions at $\gamma$ and $\gamma'$.
\end{definition}
Notice that the elements of $\cE_{k}(\tau)$ have their support along
$\tau$ and their Taylor expansion at $\gamma$ and $\gamma'$
vanish. Therefore, their Taylor expansion along all (boundary) edges of
$\cN_{\tau}$ distinct from $\tau$ also vanish.
\begin{lemma}\label{lm:Ktau}
	For an interior edge $\tau\in \cM_{1}^{o}$, we have $\cK_{k}(\tau) = (K_{1},0) \oplus (0, K_{2}) 	\oplus \cE_{k}(\tau)$.
\end{lemma}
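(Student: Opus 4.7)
The plan is to start from the direct sum decomposition (\ref{prop:S1N}), namely $\cS_k^1(\cN_\tau) = (K_1,0) \oplus (0,K_2) \oplus \RR\ub \oplus W_k(\tau)$, and intersect both sides with $\ker T_{\gamma,\gamma'}$ to extract $\cK_k(\tau)$. The idea is that three of the four summands --- the two $K$-pieces and the subspace $\cE_k(\tau)\subset W_k(\tau)$ --- lie entirely in $\cK_k(\tau)$, while the remaining summand $\RR\ub$ is transverse to $\cK_k(\tau)$ because $\ub$ has nonzero value at the vertex $\gamma$.

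First, I would establish the inclusion $(K_1,0)\oplus (0,K_2)\oplus \cE_k(\tau)\subseteq \cK_k(\tau)$. The containment $\cE_k(\tau)\subset \cK_k(\tau)$ is immediate from Definition \ref{def:edgespline}. For an element $(v_1^2 g,0)\in (K_1,0)$, a direct differentiation shows that the four quantities $f$, $\partial_{u_1}f$, $\partial_{v_1}f$, and $\partial_{u_1}\partial_{v_1}f$ all vanish on $\{v_1=0\}$, hence in particular at both endpoints $\gamma,\gamma'$ of $\tau$; the second-face component is identically zero. The argument for $(0,K_2)$ is symmetric, and directness of the sum is inherited from (\ref{prop:S1N}).

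For the reverse inclusion, given $f\in \cK_k(\tau)$, I would decompose $f = (h_1,0) + (0,h_2) + c\ub + w$ uniquely via (\ref{prop:S1N}) and show that $c=0$; then $w = f - (h_1,0) - (0,h_2) \in \ker T_{\gamma,\gamma'}\cap W_k(\tau) = \cE_k(\tau)$, which completes the proof. To isolate $c$, I would read off the ``$f_{\sigma_1}(\gamma)$'' coordinate of $T_{\gamma,\gamma'}(f)=0$. The two $K$-summands contribute $0$ by the previous paragraph. For $w = \pi_L(\Theta_\tau((A,B,C)))\in W_k(\tau)$, the first-face component of $\Theta_\tau((A,B,C))$ is $\int_0^{u_1}A(t)\,dt - v_1\,C(u_1)$, which vanishes at $(0,0)$; and since $\pi_L$ preserves the value at $\gamma$ on $\sigma_1$ (the Bernstein basis function $b_{0,0}$ is not divisible by $v_1^2$ and therefore lies in $L$), we deduce $w_{\sigma_1}(\gamma)=0$. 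Applying the same reasoning to the constant spline $(1,1)$ yields $\ub_{\sigma_1}(\gamma)=1$, so the coordinate equation forces $c=0$.

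The only subtlety worth flagging is the invariance of the vertex evaluation under $\pi_L$. This reduces to identifying the Bernstein basis function responsible for the value at $\gamma$ (namely $b_{0,0}$) and verifying that it is not among those divisible by $v_1^2$ or $u_2^2$, which is transparent in both the triangular and rectangular cases; the rest of the argument is formal bookkeeping on the decomposition (\ref{prop:S1N}).
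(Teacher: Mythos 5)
Your proof is correct and takes essentially the same route as the paper: both intersect the direct sum decomposition \eqref{prop:S1N} with $\ker T_{\gamma,\gamma'}$ and use that the two $K$-summands lie in the kernel while the $\RR\,\ub$ component must vanish. You merely make explicit two steps the paper asserts without detail, namely that $(K_{1},0),(0,K_{2})\subset \ker T_{\gamma,\gamma'}$ (by differentiating $v_1^2 g$) and that $c=0$ because $\ub$ has value $1$ at $\gamma$ whereas elements of $W_{k}(\tau)$ vanish there.
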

\begin{proof}
As $(K_{1},0), (0,K_{2}) \subset \ker T_{\gamma,\gamma'}=\cK_{k}(\tau)$
and $\cK_{k}(\tau)\cap \left(W_{k}(\tau) \oplus \RR \,\ub\right)= \cK_{k}(\tau) \cap
W_{k}(\tau) =\cE_{k}(\tau)$, we have
\begin{align*}
	\cK_{k}(\tau) &= (K_{1},0) \oplus (0, K_{2}) \oplus
	\left((W_{k}(\tau) \oplus \RR\, \ub) \cap \cK_{k}(\tau)\right)\\
	&= (K_{1},0) \oplus (0, K_{2}) \oplus \cE_{k}(\tau). \qedhere
\end{align*}
\end{proof}
\begin{corollary}
	For an interior edge $\tau\in \cM_{1}^{o}$ and for $k\ge \ms(\tau)$, 
	the dimension of $\cE_{k}(\tau)$ is
	\[ 
		\dim \cE_{k}(\tau) = d_{\tau}(k) -9 + \cross_{\tau}(\gamma) +\cross_{\tau}(\gamma'). 
	\]
\end{corollary}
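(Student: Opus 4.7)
The plan is to compute $\dim \cE_k(\tau)$ by comparing two different expressions for $\dim \cS_k^1(\cN_\tau)$, one coming from the direct sum decomposition in \eqref{prop:S1N} and the other coming from the exact sequence involving $T_{\gamma,\gamma'}$ and $\cK_k(\tau)$.

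First, from the decomposition \eqref{prop:S1N}, I would read off
\[
\dim \cS_k^1(\cN_\tau) = \dim K_1 + \dim K_2 + 1 + \dim W_k(\tau) = \dim K_1 + \dim K_2 + 1 + d_\tau(k),
\]
using that $\Theta_\tau$ is injective and $\pi_L|_{\Theta_\tau(Z_k)}$ is injective, so $\dim W_k(\tau) = \dim Z_k = d_\tau(k)$ by Lemma~\ref{lm:syzygy}\,(iii) and Proposition~\ref{prop:dimN}.

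Next, the short exact sequence
\[
0 \to \cK_k(\tau) \to \cS_k^1(\cN_\tau) \xrightarrow{\;T_{\gamma,\gamma'}\;} G(\tau) \to 0
\]
gives $\dim \cS_k^1(\cN_\tau) = \dim \cK_k(\tau) + \dim G(\tau)$. Since $k \geq \ms(\tau)$, by the definition of the separability degree together with Proposition~\ref{prop:Hgg} (the hypothesis ensures the full image $(\cH(\gamma),\cH(\gamma'))$ is attained), we have
\[
\dim G(\tau) = 10 - \cross_\tau(\gamma) - \cross_\tau(\gamma').
\]
Then Lemma~\ref{lm:Ktau} gives $\dim \cK_k(\tau) = \dim K_1 + \dim K_2 + \dim \cE_k(\tau)$.

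Equating the two expressions for $\dim \cS_k^1(\cN_\tau)$, the $\dim K_1 + \dim K_2$ terms cancel and one is left with
\[
1 + d_\tau(k) = \dim \cE_k(\tau) + 10 - \cross_\tau(\gamma) - \cross_\tau(\gamma'),
\]
which yields the claimed formula. I do not expect any real obstacle here: the main content of the statement is already encapsulated in Proposition~\ref{prop:Hgg} and Lemma~\ref{lm:Ktau}; the only subtle point is checking that $k \geq \ms(\tau)$ is exactly the hypothesis needed to apply the separability identity $G(\tau) = (\cH(\gamma), \cH(\gamma'))$, which is immediate from the definition of $\ms(\tau)$.
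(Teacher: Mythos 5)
Your proposal is correct and follows essentially the same route as the paper: Lemma \ref{lm:Ktau} to split off $K_1,K_2$ from $\cK_k(\tau)$, the rank--nullity count from the exact sequence for $T_{\gamma,\gamma'}$, the decomposition \eqref{prop:S1N} giving $\dim\cS^1_k(\cN_\tau)=1+d_\tau(k)+\dim K_1+\dim K_2$, and Proposition \ref{prop:Hgg} for $\dim G(\tau)=10-\cross_\tau(\gamma)-\cross_\tau(\gamma')$. The only cosmetic difference is that you equate two expressions for $\dim\cS^1_k(\cN_\tau)$ rather than substituting directly, which is the same computation.
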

\begin{proof}
By Lemma \ref{lm:Ktau}, we have
\[
	\dim \cE_{k}(\tau) = \dim \cK_{k}(\tau) - \dim K_{1} -\dim K_{2}.
\]
As $\cK_{k}(\tau)$ is the kernel of $T_{\gamma,\gamma'}$ and
$G(\tau)$ is its image, we have
\[
	\dim \cK_{k}(\tau) = \dim S^{1}_{k}(\cN_{\tau}) -\dim G(\tau).
\]
As $\dim(W_{k}(\tau))= d_{\tau}({k})$, we deduce from the
decomposition \eqref{prop:S1N} that
$\dim S^{1}_{k}(\cN_{\tau}) = 1+ d_{\tau}(k)+ \dim K_{1} +
\dim K_{2}$.
Using Proposition \ref{prop:Hgg}, $G(\tau)=
(\cH(\gamma),\cH(\gamma'))$ and we obtain
\begin{align*}
	\dim \cE_{k}(\tau)&=\dim S^{1}_{k}(\cN_{\tau}) -\dim G(\tau) -
                      \dim K_{1} -\dim K_{2}\\
  & = d_{\tau}(k) -9+ \cross_{\tau}(\gamma) +\cross_{\tau}(\gamma'). \qedhere
\end{align*}
\end{proof}
\begin{remark}\label{rem:boundary:edge}
	When $\tau$ is a boundary edge, which belongs to the face $\sigma_{1}\in
	\cM_{2}$, we have
	$\cS_{k}(\cN_{\tau})=\cR_{k}(\sigma_{1})$, $\cK_{k}(\tau)=
	K_{1} \oplus \cE_{k}(\tau)$ 
	and for $k\geq \ms(\tau)= 3+F_{\Delta}(\sigma_{1})$,
	$\dim G(\tau)=8$ and $\dim \cE_{k}(\tau) = k+ 1 +(k+1 -
	F_{\Delta}(\sigma_{1}))  -8= 2 \, k  - F_{\Delta}(\sigma_{1}) - 6$.

	Notice that this is also what we obtain if we attach a virtually 
	rectangular face along $\tau$ with constant gluing data:
	$n=\mu=0$, $m=0$, $e=2$, $\cross_{\tau}(\gamma)=\cross_{\tau}(\gamma')=0$
	and
	\[ 
		d_{\tau}(k) = 2\,k + 3 -  F_{\Delta}(\sigma_{1}),
	\]
	so that $\dim \cE_{k}(\tau)  =  d_{\tau}(k) - 9 +\cross_{\tau}(\gamma) +\cross_{\tau}(\gamma')$.
\end{remark}
 
\section{$G^1$ splines around a vertex}\label{sec:aroundv}

We consider now a topological surface $\cO$ composed of faces
$\sigma_{1},\ldots, \sigma_{F} \in \cO_{2}$ sharing a single vertex
$\gamma$, and such that $\sigma_{i}$ and $\sigma_{i+1}$ share the edge
$\tau_{i+1}=(\gamma, \delta_{i+1})$.
In particular $\tau_{i}, \tau_{i+1}$ are the two edges of $\sigma_{i}$
containing the vertex $\gamma$. 
The number of edges containing $\gamma$ is denoted $F'$.  
All the vertices of $\cO$ different from $\gamma$ are boundary vertices.
The vertex $\gamma$ is an interior vertex, iff $\sigma_{F}$ and
$\sigma_{1}$ share the edge $\tau_{1}$.
In this case, we identify
the indices modulo $F$ and we have $F'=F$, otherwise we have $F'=F+1$.
The gluing data for the interior edge $\tau_{i}$ is
$\ma_{i}=\frac{a_{i}}{c_{i}}, \mb_{i}=\frac{b_{i}}{c_{i}}$.

The coordinates in the ring $\cR(\sigma_{i})$ are chosen so that
the coordinates of $\gamma$ 
are $(0,0)$ and $\tau_{i}$ is defined by $v_{i}=0$, $u_{i}\in [0,1]$
and by $u_{i-1}=0$, $v_{i-1}\in [0,1]$ in $\cR(\sigma_{i-1})$.
The canonical form of the  transition map at $\gamma$ across the edge $\tau_{i}$ is then 
\[
	\phi_{\tau_{i}}: (u,v) \longrightarrow 
	\begin{pmatrix}
		v_{i} \,\mb_{i}(u_{i})\\
		u_{i}+v_{i}\,\ma_{i}(u_{i}) 
	\end{pmatrix}
\]
Let $f=(f_{i})_{i=1,\ldots,F}\in \cS^{1}(\cO)$.
The gluing condition \eqref{eq:edgecond} implies that the Taylor expansion of
$f_{i}$ at $\gamma$ is of the form
\[
	f_{i}(u_{i},v_{i})= p + q_{i}\, u_{i} + q_{i+1} \,v_{i} + s_{i} \,
	u_{i}v_{i} + r_{i} \, u_{i}^{2}+ r_{i+1} \, v_{i}^{2} + \cdots
\]
for $p,q_{i}, s_{i}, r_{i} \in \RR$, $i=1, \ldots, F$ (see Fig. \ref{fig:taylor}).
\begin{figure}[ht]
	\begin{center}
	\includegraphics[width=5.2cm]{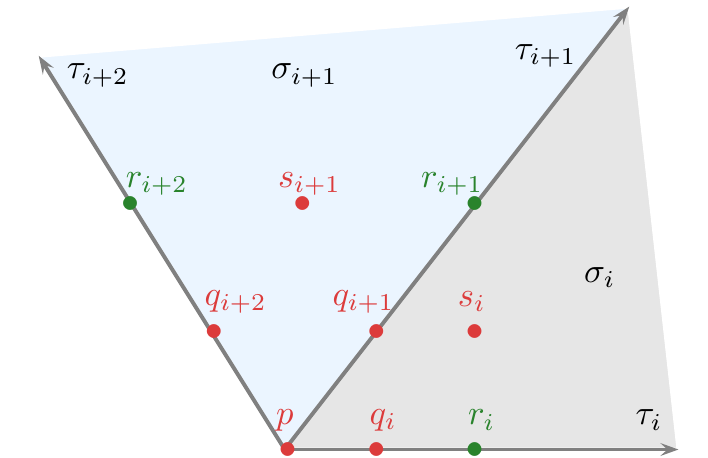}
	\end{center}
	\vspace{-0.5cm}
	\caption{Taylor coefficients around a vertex.}\label{fig:taylor}
\end{figure}

\noindent By a computation similar to \eqref{eq:exptaylor}, Condition
\eqref{eq:edgecond} implies that
\begin{eqnarray}
	q_{i+1} & = & \ma_{i}(0)\, q_{i}+ \mb_{i}(0)\, q_{i-1} \ \
 	\hspace{3.5cm} \ \ (i=2,\ldots,F)\ \ \label{eq:recg1.1}\\
	 s_{i} & = & 2\,\ma_{i}(0)\, r_{i} +
 	\mb_{i}(0) \, s_{i-1} + \ma_{i}'(0) \, q_{i}+
 	\mb_{i}'(0) \, q_{i-1} \ \ \ (i=2,\ldots,F)\ \ \label{eq:recg1.2}
\end{eqnarray}
Let $\cH(\gamma)$ be the vector space spanned by the vectors~$\bh=[p,q_{1},\ldots,q_{F'},s_{1},\ldots,s_{F}]$
for~$\bh'=[p,q_{1},\ldots,q_{F'},s_{1},\ldots,s_{F},r_{1},\ldots,r_{F'}]$
a solution of  the linear system \eqref{eq:recg1.1}, \eqref{eq:recg1.2}.
\begin{proposition} \label{prop:Hg}
	\[ 
		\dim \cH(\gamma) = 3 + F(\gamma) -\sum_{\tau \ni \gamma}
		\cross_{\tau}(\gamma) + \cross_{+}(\gamma)
	\]
	where $F=F(\gamma)$ is the number of faces around the vertex $\gamma$.
\end{proposition}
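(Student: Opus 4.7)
The plan is to decompose the $1 + F' + F$ coordinates of $\bh = [p,q_1,\ldots,q_{F'},s_1,\ldots,s_F]$ into the trivially free scalar $p$, the $q$-block constrained by \eqref{eq:recg1.1}, and the $s$-block constrained by \eqref{eq:recg1.2} together with the auxiliary $r_i$'s that are projected away. Since the $r_i$'s only occur in \eqref{eq:recg1.2}, it suffices to compute the dimension of the $q$-solution space, then, for fixed admissible $q$'s, the dimension of the image in $s$-space under elimination of the $r_i$'s, and finally add $1$ for the $p$ coordinate.

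First I would verify that the $q$-subspace is always $2$-dimensional. In the boundary case $F' = F+1$ the $F-1$ relations \eqref{eq:recg1.1} successively determine $q_3,\ldots,q_{F+1}$ from the free initial pair $(q_1,q_2)$. In the interior case $F'=F$ the same recursion is imposed cyclically; Condition~\ref{cond:comp1} asserts precisely that the product of transfer matrices $\bigl(\begin{smallmatrix}0 & 1\\ \mb_i(0) & \ma_i(0)\end{smallmatrix}\bigr)$ is the identity, so cyclic closure is automatic and the solution space remains parametrised by $(q_1,q_2)$.

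Next, for fixed admissible $q$'s, I iterate \eqref{eq:recg1.2} starting from a free $s_1$: at step $i\ge 2$, if $\tau_i$ is non-crossing at $\gamma$ (so $\ma_i(0)\neq 0$) the free parameter $r_i$ allows $s_i$ to take any value given $s_{i-1}$, whereas if $\tau_i$ is crossing then $s_i=\mb_i(0)s_{i-1}+\ma_i'(0)q_i+\mb_i'(0)q_{i-1}$ is forced. In the boundary case there is no cyclic wrap, so the image in $s$-space has dimension $1+(F-1)-\ms$ where $\ms=\sum_{\tau\ni\gamma}\cross_\tau(\gamma)$, and summing gives $\dim\cH(\gamma) = 3+F-\ms$, in agreement with the formula since $\cross_+(\gamma)=0$. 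In the interior case with at least one non-crossing edge, one may reindex so that $\tau_1$ is non-crossing; the cyclic relation at $i=1$ is then absorbed by the still-free parameter $r_1$ without further reducing the $s$-image, giving again $3+F-\ms$ with $\cross_+(\gamma)=0$.

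The delicate case, which I expect to be the main obstacle, is the interior crossing vertex $\cross_+(\gamma)=1$, where $F=4$ by the standing assumption and every $\ma_i(0)$ vanishes. Here no $r_i$ can absorb the cyclic relation at $i=1$, which instead reads $\bigl(1-\prod_{i=1}^{4}\mb_i(0)\bigr)s_1 = \Lambda(q_1,\ldots,q_4)$ for a certain linear form $\Lambda$ obtained by substituting the expression for $s_4$ in terms of $s_1$ and the $q$'s into the cyclic equation. The crossing identities \eqref{eq:qbetas} force $\prod_i\mb_i(0)=1$, killing the coefficient of $s_1$, while Condition~\ref{cond:comp2}, in the guise of the identity \eqref{eq:unwanted} extracted in the preceding lemma, is exactly the statement that $\Lambda$ vanishes on the $2$-dimensional $q$-subspace. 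The cyclic relation thus collapses to $0=0$, $s_1$ stays free, and the $s$-image gains one extra dimension relative to the mixed case, so $\dim\cH(\gamma) = 1+2+1 = 3+F-\ms+1$, confirming the $\cross_+(\gamma)$ correction.
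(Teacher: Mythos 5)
Your proposal is correct and follows essentially the same route as the paper: the same decomposition into the free $p$, the $2$-dimensional $q$-block closed up cyclically by Condition~\ref{cond:comp1}, and the $s$-block in which non-crossing edges are absorbed by the free $r_i$ while crossing edges force relations, with Condition~\ref{cond:comp2} (via \eqref{eq:qbetas} and \eqref{eq:unwanted}) making the one cyclic relation at an all-crossing vertex redundant. Your sequential-elimination phrasing and the explicit collapse $\bigl(1-\prod_i\mb_i(0)\bigr)s_1=\Lambda(q)$ are just a more spelled-out version of the paper's codimension count.
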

\begin{proof}
Notice that $\cH(\gamma)$ is isomorphic to the projection of the solution set of system 
\eqref{eq:recg1.1}, \eqref{eq:recg1.2} on 
the space of the variables $[p, \bq,\bs]=[p,q_{1},\ldots,q_{F'},s_{1},\ldots,s_{F}]$.
 
The solutions in $\bq=(q_{1},\ldots,q_{F'})$ of the first set of
equations satisfy the induction relations
\[
	\left(\begin{array}{c} q_{i} \\ q_{i+1} \end{array} \right) =
	\left(
 	\begin{array}{cc} 0 & 1 \\
 	\mb_i(0) & \ma_i(0) \end{array}
	\right) \, \left(\begin{array}{c} q_{i-1} \\ q_{i} \end{array} \right)
	\ \mathrm{for}\ i=2,\ldots,F.
\]
As we have the compatibility condition \ref{cond:comp1} at an interior vertex,
the solutions of \eqref{eq:recg1.1} span a linear
space of dimension $2$, parametrized for instance by $q_{1},q_{2}$.

The system \eqref{eq:recg1.2} is formed by linearly independent
equations which involve $r_{k}$ and $q_{i},s_{j}$ when $\ma_{k}(0)\neq 0$
and by equations which only involve $s_{i} ,s_{i-1}$ and $q_{j}$ when $\ma_{i}(0)= 0$.

Therefore the projection of the solution set of \eqref{eq:recg1.2} on the space corresponding to the variables $[p, \bq,\bs]$ 
is defined by the equations which only involve $s_{i} ,s_{i-1}$ and
$q_{i}, q_{i-1}$ when $\ma_{i}(0)= 0$.

If one of the edges around $\gamma$ is not a crossing edge, then the
codimension of this space is
$\sum_{\tau \ni \gamma} \cross_{\tau}(\gamma)$, with the convention
that $\cross_{\tau}(\gamma)=0$
if $\tau$ is a boundary edge. 

If all the edges around $\gamma$ are crossing edges, then the
compatibility conditions \ref{cond:comp2} at $\gamma$ imply that
one of these equations is dependent from the other.
Therefore, the codimension of this space is $\sum_{\tau \ni \gamma}
\cross_{\tau}(\gamma) -\cross_{+}(\gamma)$.

By intersecting it with the solution space of \eqref{eq:recg1.1}, we
deduce that the dimension of $\cH(\gamma)$ is precisely given by
\[
	1 + 2 + F -\sum_{\tau \ni \gamma} \cross_{\tau}(\gamma) + \cross_{+}(\gamma).\qedhere
\]
\end{proof}

Let $T_{\gamma}=\prod_{\sigma\ni \gamma} T_{\gamma}^{\sigma}$ be the
Taylor map at $\gamma$ on $\cO$
and let $T_{\partial\cO}= \prod_{\tau\not\ni\gamma} T_{\tau}^{\sigma}$ be
the Taylor map along all the boundary edges which do not contain $\gamma$.

For $k\in \NN$, we define $\cV_{k}(\gamma)= \ker T_{\partial\cO} \cap \cS_{k}^{1}(\cO)$ the
set of $G^{1}$ spline functions on $\cO$ which vanish at the first
order along the boundary edges (which do not contain $\gamma$). 
\begin{proposition}\label{prop:HgT}
	For $k\ge \max_{i=1,\ldots,F}(\ms(\tau_{i}))$,
	$T_{\gamma}(\cV_{k}(\gamma))= \cH(\gamma)$.
\end{proposition}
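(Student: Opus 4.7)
The inclusion $T_\gamma(\cV_k(\gamma)) \subseteq \cH(\gamma)$ is immediate: for any $f \in \cV_k(\gamma)$, specialising the $G^1$ relations \eqref{eq:edgecond0}--\eqref{eq:edgecond} along each interior edge $\tau_i$ through $\gamma$ to the Taylor jet at $\gamma$ yields exactly the linear system \eqref{eq:recg1.1}--\eqref{eq:recg1.2} defining $\cH(\gamma)$.

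For the reverse inclusion, given $\bh\in\cH(\gamma)$ I would construct a witness $f \in \cV_k(\gamma)$ with $T_\gamma(f) = \bh$ in three stages. Because $\bh$ satisfies the vertex gluing system, its restriction to each pair $(\sigma_{i-1},\sigma_i)$ automatically lies in the edge-level $\cH(\gamma)$ of Section~\ref{sec:sepvert}. Since $k \geq \ms(\tau_i)$, Proposition~\ref{prop:Hgg} supplies a spline $g^{(i)}=(g^{(i)}_{i-1}, g^{(i)}_i) \in \cS_k^1(\cN_{\tau_i})$ whose Taylor jet at $\gamma$ equals this restriction and whose Taylor jet at the opposite vertex $\delta_i$ of $\tau_i$ vanishes.

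Next, on each shared face $\sigma_i$ the two candidates $g^{(i)}_i$ and $g^{(i+1)}_i$ have matching Taylor at $\gamma$, so their difference lies in $\ker T_\gamma^{\sigma_i} = (v_i^2) + (u_i^2)$ inside $\cR_k(\sigma_i)$. I would split this difference into a summand in $(v_i^2)$ and one in $(u_i^2)$, then add the former to $g^{(i)}_i$ (an admissible modification of $g^{(i)}$ inside $\cK_k(\tau_i)$, since $(v_i^2)$ is precisely the $K$-space attached to $\sigma_i$ on the edge $\tau_i$) and the latter to $g^{(i+1)}_i$ (an admissible modification inside $\cK_k(\tau_{i+1})$, since $(u_i^2)$ is the $K$-space attached to $\sigma_i$ on the edge $\tau_{i+1}$). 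These corrections vanish to order two along the edge through $\gamma$ they are attached to, hence they preserve the $G^1$ gluing along $\tau_i,\tau_{i+1}$ and leave all Taylor jets at $\gamma, \delta_i, \delta_{i+1}$ unchanged, while forcing the two candidates to coincide on $\sigma_i$. The reconciled polynomials $(f_i)$ assemble into $f \in \cS_k^1(\cO)$ with $T_\gamma(f) = \bh$ and vanishing Taylor jet at every boundary vertex.

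It remains to enforce $f \in \ker T_{\partial \cO}$. I would add to each $f_i$ a further correction from $(u_i^2\, v_i^2) \cap \cR_k(\sigma_i)$; such a polynomial vanishes to order two along both edges of $\sigma_i$ through $\gamma$ and is therefore invisible to every $G^1$ condition at $\gamma$ and to every Taylor jet at the three corners of $\sigma_i$ already fixed in the previous step. The main obstacle---and the place where I expect the argument to be the most delicate---is verifying that this residual freedom is ample enough to annihilate the first-order jet of $f_i$ along each boundary edge of $\sigma_i$ opposite $\gamma$; since that jet already vanishes at its two endpoints (where $T_{\delta_i}(f_i)=0$ by construction), the cancellation reduces to interpolating a one-variable polynomial on the boundary segment by elements carrying the factor $u_i^2\, v_i^2$, a task achieved by a straightforward Bernstein-basis dimension count under the hypothesis $k \geq \max_i \ms(\tau_i)$.
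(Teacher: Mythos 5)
Your proposal is correct and follows the paper's high-level strategy (the easy inclusion, then lifting the edge restrictions of $\bh$ via Proposition~\ref{prop:Hgg} with zero jet at the opposite vertices $\delta_i$, then assembling the pairs into a single spline on $\cO_\gamma$), but the assembly is executed quite differently. The paper first \emph{normalizes} each edge spline to have support along its edge, i.e.\ it discards the components in $(v_i^2)$ and $(u_{i-1}^2)$; after that, the two contributions meeting on a face $\sigma_i$ can simply be merged coefficient-wise in the Bernstein basis (the overlap block at the corner $\gamma$ coincides because both have Taylor jet $h_i$ there), and the merged $g_i$ is automatically supported in $\cR^{\sigma_i}(\tau_i)+\cR^{\sigma_i}(\tau_{i+1})$, so its first-order jet along every edge not through $\gamma$ vanishes for free. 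You instead keep the unnormalized lifts, reconcile them on each face via the decomposition $\ker T_\gamma^{\sigma_i}\cap\cR_k(\sigma_i)=\bigl((v_i^2)\cap\cR_k(\sigma_i)\bigr)+\bigl((u_i^2)\cap\cR_k(\sigma_i)\bigr)$, and then must add a final correction from $(u_i^2v_i^2)\cap\cR_k(\sigma_i)$ to put $f$ into $\ker T_{\partial\cO}$. That last step does go through by the Bernstein count you indicate, but two of your intermediate claims are slightly off: first, after reconciliation the Taylor jet at the fourth corner $\epsilon_i$ of a rectangular face need \emph{not} vanish (Proposition~\ref{prop:Hgg} only controls the two endpoints of each $\tau_i$), so "vanishing Taylor jet at every boundary vertex" and "that jet already vanishes at its two endpoints" are not literally true for the far edges of a quadrilateral; what saves you is that the offending Bernstein coefficients sit in the block $l,m\ge 2$ reachable by $(u_i^2v_i^2)$. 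Second, the correction $p\in(v_i^2)$ does \emph{not} by itself preserve the jet at $\delta_{i+1}$; one must instead read $T_{\delta_{i+1}}(f_i)$ off the other representation $f_i=g_i^{(i+1)}+q$ with $q\in(u_i^2)$. With those two points repaired, your argument is a valid alternative; the paper's support-normalization buys the vanishing along the far edges automatically, at the cost of being slightly less transparent about why the corner blocks agree, while your route is more explicit about the reconciliation but pays for it with the extra interpolation step.
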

\begin{proof}
By construction, the elements of $\cV_{k}(\gamma)$ satisfy the equations
\eqref{eq:recg1.1}, \eqref{eq:recg1.2}.
This implies that $T_{\gamma}(\cV_{k}(\gamma))\subset \cH(\gamma)$.

Consider an element $\bh= (h_{1},\ldots, h_{F}) \in \cH(\gamma)$. By Proposition \ref{prop:Hgg},
for $k\ge \ms(\tau_{i})$, there exists
$(f_{i},\tilde{f}_{i})\in \cS_{k}^{1}(\cN_{\tau_{i}})$ 
such that $T_{\gamma} (f_{i},\tilde{f}_{i}) =(h_{i},h_{i-1})$ and $T_{\delta_{i}} (f_{i},\tilde{f}_{i}) =0$.
Let $v_{i}=0$ (resp. $u_{i-1}=0$) be the equation of $\tau_{i}$ in
$\sigma_{i}$ (resp. $\sigma_{i-1}$). As for any polynomials
$p\in (v_{i}^{2}) \cap \cR_{k}(\sigma_{i}), q \in (u_{i-1})^{2} \cap
\cR_{k}(\sigma_{i-1})$,
$T_{\gamma}(p,q)=0$, 
we can assume that $(f_{i},\tilde{f}_{i})$ has its support in $\cR^{\sigma_{i}}(\tau_{i}) \oplus \cR^{\sigma_{i-1}}(\tau_{i})$.

By construction, we have $T_{\gamma}^{\sigma_{i}}(f_{i}) =
T_{\gamma}^{\sigma_{i}}(\tilde{f}_{i+1})= h_{i}$. Thus, there exist
$g_{i}\in \cR_{k}(\sigma_{i})$ supported in $\cR^{\sigma_{i}}(\tau_{i}) + \cR^{\sigma_{i}}(\tau_{i+1}) $
such that $T_{\tau_{i}}^{\sigma_{i}}(g_{i})=f_{i}$,
$T_{\tau_{i+1}}^{\sigma_{i}}(g_{i}) =\tilde{f}_{i+1}$. It is constructed
by taking the coefficients of $f_{i}$ on $\cR^{\sigma_{i}}(\tau_{i})$
and those of $\tilde{f}_{i+1}$ on $\cR^{\sigma_{i}}(\tau_{i+1})$, the
coefficients in $\cR^{\sigma_{i}}(\tau_{i}) \cap \cR^{\sigma_{i}}(\tau_{i+1}) $
coinciding (see Fig. \ref{fig:edgefunction}). As
$T_{\delta_{i}}^{\sigma_{i}}(f_{i}) =
T_{\delta_{i}}^{\sigma_{i}}(g_{i}) =0$,
$T_{\delta_{i+1}}^{\sigma_{i}}(\tilde{f}_{i})=
T_{\delta_{i+1}}^{\sigma_{i}}(g_{i})= 0$
and $g_{i}$ is supported in $\cR^{\sigma_{i}}(\tau_{i}) + \cR^{\sigma_{i}}(\tau_{i+1})$,
we have $T_{\tau}^{\sigma_{i}}(g_{i})=0$ for any edge $\tau$ of the
face $\sigma_{i}$, which does not contain $\gamma$.
\begin{figure}[ht]
	\begin{center}
	\includegraphics[height=4.8cm]{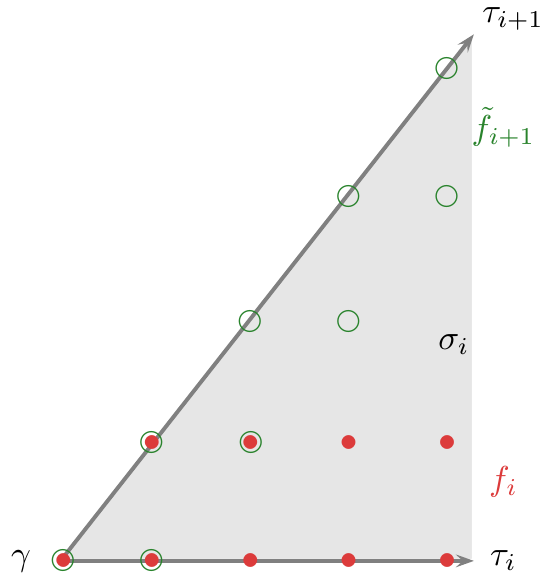}
	\end{center} 
 	\vspace{-0.5cm}
	\caption{Lifting the edge functions.}\label{fig:edgefunction}
\end{figure}

Let $\bg=[g_{1},\ldots,g_{F}]\in \oplus_{\sigma_{i}\ni \gamma}
\cR_{k}(\sigma_{i})$. By construction, $\bg$ vanishes at the first order along all the
boundary edges of $\cO$, which do not contain $\gamma$.
Moreover, $T_{\tau_{i}}(\bg)=(f_{i},\tilde{f}_{i}) \in
\cS_{k}^{1}(\cN_{\tau_{i}})$, thus $\bg$ satisfies the gluing
conditions along the edge $\tau_{i}$.
We also have $T_{\tau}(\bg)=0$ for any edge $\tau$, which does not
contain $\gamma$.
Thus $\bg$ satisfies the gluing conditions along all the edges and its
image by $T_{\partial \cO}$ vanishes, i.e. 
$\bg \in \cS^{1}_{k}(\cO) \cap \ker T_{\partial \cO}= \cV_{k}(\gamma)$. By construction,
$T_{\gamma}(\bg)=\bh$. This shows that $\cH(\gamma)\subset T_{\gamma}(\cV_{k}(\gamma))$
and concludes the proof.
\end{proof}

\section{$G^1$ splines on a general mesh}
We consider now a general mesh $\cM$ with an arbitrary number of
faces, possibly with boundary edges.

We denote by $T_{0}=\prod_{\gamma\in \cM_{0}} T_{\gamma}$ 
the Taylor map at all the vertices of $\cM$ and $\cH= T_{0}(S_{k}^{1}(\cM))$.
We have the following exact sequence:
\[ 
0 \rightarrow \cK_{k} \rightarrow \cS_{k}^{1}(\cM) \stackrel{T_{0}}{\rightarrow} G \rightarrow 0 
\]
where $\cK_{k}=\ker T_{0} \cap \cS_{k}^{1}(\cM)$
and $G=T_{0}(\cS_{k}^{1}(\cM))$. 
Let $\ms^{*} = \max \{\ms(\tau)\mid \tau\in \cM_{1}\}$. We have
$\ms^{*}\leq \max \{\nu_{\tau}+m_{\tau}+4\mid \tau\in \cM_{1}\}$

\subsection{Splines at a vertex}

Let $\gamma\in \cM_{0}$ be a vertex of $\cM$ and
let $\cO_{\gamma}$ be the sub-mesh associated to
the faces of $\cM$ which contain $\gamma$.
Let $\cV_{k}(\gamma)$ be the set of spline functions in $\cS_{k}^{1}(\cM)$
supported on the faces of $\cO_{\gamma}$, which vanish at the first
order along the edges that do not contain $\gamma$.
\begin{proposition}\label{prop:H}
	For $k\ge \ms^{*}$, $T_{0}(\cS_{k}^{1}(\cM)) =\prod_{\gamma} \cH(\gamma)$ and
 	\[ 
 		\dim T_{0}(\cS_{k}^{1}(\cM))  = \sum_{\gamma\in\cM_{0}} (F(\gamma)+3) 
 		-\sum_{\gamma\in \cM_{0}}\sum_{\tau \ni \gamma}
 		\cross_{\tau}(\gamma) + \sum_{\gamma\in \cM_{0}} \cross_{+}(\gamma),
 	\]
where $F(\gamma)$ is the number of faces of $\cM$ that contain the vertex
$\gamma\in \cM_{0}$.
\end{proposition}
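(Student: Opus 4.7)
The plan is to prove the statement by showing the two inclusions and then reading off the dimension. The easy direction $T_{0}(\cS_{k}^{1}(\cM)) \subset \prod_{\gamma} \cH(\gamma)$ is immediate: any $f \in \cS_{k}^{1}(\cM)$ satisfies the gluing condition \eqref{eq:edgecond} along every edge through every vertex $\gamma$, so by the derivation of \eqref{eq:recg1.1}--\eqref{eq:recg1.2} in Section \ref{sec:aroundv}, the Taylor coefficients of $f$ at $\gamma$ form a vector in $\cH(\gamma)$. So the main work is the reverse inclusion, for which I would construct, for each $\gamma$, a spline $\bg_{\gamma}\in\cS_{k}^{1}(\cM)$ whose Taylor image is concentrated at $\gamma$.

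The construction uses Proposition \ref{prop:HgT} applied locally. Given $(\bh_{\gamma})_{\gamma}\in\prod_{\gamma}\cH(\gamma)$, for each $\gamma$ let $\cO_{\gamma}$ be the star of $\gamma$ in $\cM$. Since $k \ge \ms^{*} \ge \max_{\tau\ni\gamma}\ms(\tau)$, Proposition \ref{prop:HgT} yields some $\bg_{\gamma}\in\cV_{k}(\gamma)\subset\cS_{k}^{1}(\cO_{\gamma})$ with $T_{\gamma}(\bg_{\gamma})=\bh_{\gamma}$ and vanishing Taylor expansion along every edge of $\cO_{\gamma}$ that does not contain $\gamma$. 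Extend $\bg_{\gamma}$ by zero on the faces of $\cM\setminus\cO_{\gamma}$. To see that this extension is a global $G^{1}$ spline, I would verify the gluing condition along each edge $\tau$ of $\cM$ that lies on $\partial\cO_{\gamma}$ and is interior in $\cM$: on the $\cO_{\gamma}$ side, $\bg_{\gamma}$ vanishes to first order along $\tau$, so its $1$-jet along $\tau$ is identically zero, which matches the zero $1$-jet of the zero function on the opposite face composed with the transition map. Along edges of $\cO_{\gamma}$ that contain $\gamma$, the gluing is inherited from $\bg_{\gamma}\in\cS_{k}^{1}(\cO_{\gamma})$.

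The step that needs most care is checking $T_{\gamma'}(\bg_{\gamma})=0$ for every vertex $\gamma'\neq\gamma$, so that summing $\bg := \sum_{\gamma}\bg_{\gamma}$ produces a spline in $\cS_{k}^{1}(\cM)$ with $T_{\gamma'}(\bg)=\bh_{\gamma'}$. If $\gamma'$ does not belong to $\cO_{\gamma}$ this is trivial. Otherwise $\gamma'$ sits on some face $\sigma$ of $\cO_{\gamma}$, and among the two edges $\tau,\tau'$ of $\sigma$ through $\gamma'$, at least one, say $\tau'$, does not contain $\gamma$ (since $\gamma\neq\gamma'$). By construction, $\bg_{\gamma}|_{\sigma}$ lies in the ideal $(\ell_{\tau'}^{2})$, hence maps to zero in $\cR^{\sigma}(\gamma')=\cR(\sigma)/(\ell_{\tau}^{2},\ell_{\tau'}^{2})$; this kills the value, the two first derivatives and the cross derivative at $\gamma'$. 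Thus $T_{\gamma'}^{\sigma}(\bg_{\gamma})=0$ on every face $\sigma$ containing $\gamma'$, proving $T_{\gamma'}(\bg_{\gamma})=0$ and establishing surjectivity.

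Once $T_{0}(\cS_{k}^{1}(\cM))=\prod_{\gamma\in\cM_{0}}\cH(\gamma)$ is established, the dimension formula is just
\[
\dim T_{0}(\cS_{k}^{1}(\cM))=\sum_{\gamma\in\cM_{0}}\dim\cH(\gamma),
\]
and substituting the formula for $\dim\cH(\gamma)$ from Proposition \ref{prop:Hg} gives the claimed expression. The conceptual obstacle is the vanishing of the cross-derivative $\partial_{u}\partial_{v}$ at other vertices; this is handled cleanly by the ideal-theoretic description of the Taylor ring together with the strong vanishing built into $\cV_{k}(\gamma)$.
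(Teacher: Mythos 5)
Your proof is correct and follows essentially the same route as the paper: reduce to Proposition \ref{prop:HgT} on each vertex star $\cO_{\gamma}$, extend by zero to all of $\cM$, note that $T_{\gamma'}(\cV_{k}(\gamma))=0$ for every $\gamma'\neq\gamma$, and sum the dimensions from Proposition \ref{prop:Hg}. You in fact supply the details (the gluing check for the zero extension across the boundary of $\cO_{\gamma}$ and the ideal-theoretic vanishing of $T_{\gamma'}$) that the paper's one-line proof leaves implicit.
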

\begin{proof}
By proposition \ref{prop:HgT}, for $k\ge \ms^{*}$ the image of
$\cV_{k}(\gamma)$ by $T_{0}$ is $G(\gamma)$,
and $T_{\gamma'}(\cV_{k}(\gamma))=0$ for any other vertex $\gamma'\neq
\gamma$.

This shows that $T_{0}(\cS_{k}^{1}(\cM) )
= \prod_{\gamma} G(\gamma)
= \prod_{\gamma} \cH(\gamma)$.
We deduce the dimension formula from Proposition \ref{prop:Hg}.
\end{proof}

\subsection{Splines on edges}

For an interior edge $\tau=(\gamma,\gamma')\in \cM_{1}$, let $\cN_{\tau}$ be
the sub-mesh made of the faces $\sigma_{1},\sigma_{2}$ of $\cM$ containing $\tau$.
Let $\cE_{k}(\tau)=\ker T_{\gamma,\gamma'}\cap \Theta_{\tau}(Z_{k})$
(see Definition \ref{def:edgespline}).
The elements of $\cE_{k}(\tau)$ correspond to splines of
$\cS_{k}^{1}(\cN_{\tau})$,
which are in the kernel of $T_{\gamma,\gamma'}$ and with a support in
$\cR^{\sigma_{1}}(\tau)\oplus \cR^{\sigma_{2}}(\tau)$.
Thus, $\cE_{k}(\tau)\subset \ker T_{\tau'}$ for any edge $\tau'\in
\cM_{1}$, distinct from $\tau$. We deduce that any element of
$\cE_{k}(\tau)$ satisfies the gluing condition along all edges of
$\cM_{1}$, and thus corresponds to a spline function in
$\cS_{k}^{1}(\cM)$.
In other words, we have $\cE_{k}(\tau)\subset \cS_{k}^{1}(\cM)\cap \ker T_{0}=\cK_{k}$.
The elements of $\cE_{k}(\tau)$ have a support in $\cR^{\sigma_{1}}(\tau)\oplus \cR^{\sigma_{2}}(\tau)$
and their Taylor coefficients at the end points of $\tau$ vanish.

Thus the support of the elements of $\cE_{k}(\tau)$ 
and
$\cE_{k}(\tau')$ for two distinct edges $\tau,\tau'$ do not intersect,
and their sum is direct. Let $\cE_{k}=\oplus_{\tau\in \cM_{1}} \cE_{k}(\tau)$.

Let  $\cF_{k}= \ker T_{1} \cap S_{k}^{1}(\cM)$ be the set of spline functions, which Taylor
expansions along all edges vanish. 
\begin{proposition}\label{prop:Kk}
\[
	\cK_{k} = \cF_{k} \oplus \cE_{k}
\]
\end{proposition}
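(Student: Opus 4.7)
The plan is to prove both inclusions $\cF_k + \cE_k \subseteq \cK_k$ and $\cK_k \subseteq \cF_k + \cE_k$, and separately verify that the sum is direct. The containment $\cF_k \subseteq \cK_k$ is immediate: on any face $\sigma$ at a vertex $\gamma$ with adjacent edges $\tau,\tau'$ one has $(\ell_\tau^2) \cap (\ell_{\tau'}^2) \subseteq (\ell_\tau^2, \ell_{\tau'}^2)$, so vanishing along both edges forces $T_\gamma^\sigma$ to vanish; and $\cE_k \subseteq \cK_k$ is already recorded in the excerpt just above the statement. The whole argument hinges on a single \emph{off-diagonal} fact: for distinct edges $\tau \neq \tau'$ and any $e_{\tau'} \in \cE_k(\tau')$ one has $T_\tau(e_{\tau'}) = 0$. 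This is trivial when $\tau$ and $\tau'$ share no face (then $e_{\tau'}$ is identically zero on the faces adjacent to $\tau$), and otherwise $\tau$ is an edge of $\cN_{\tau'}$ distinct from $\tau'$, so the vanishing is the observation recorded immediately after Definition~\ref{def:edgespline}.

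\textbf{Directness.} Take $e = \sum_{\tau \in \cM_1} e_\tau \in \cF_k \cap \cE_k$ with $e_\tau \in \cE_k(\tau)$. For each fixed edge $\tau$, apply $T_\tau$: the off-diagonal fact kills every term with $\tau' \neq \tau$, so $0 = T_\tau(e) = T_\tau(e_\tau)$. Now $e_\tau$ lies in $W_k(\tau) \subseteq L$, and its Bernstein support on each adjacent face consists of basis functions \emph{not} divisible by the square of the edge equation of $\tau$; but $\ker T_\tau^\sigma$ is precisely the ideal generated by that square, which in the Bernstein basis is spanned by the complementary set of basis functions. Hence $T_\tau$ is injective on $W_k(\tau)$ and $e_\tau = 0$. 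As $\tau$ was arbitrary, $e = 0$.

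\textbf{Surjectivity.} Given $f \in \cK_k$, for every interior edge $\tau$ with endpoints $\gamma, \gamma'$ and adjacent faces $\sigma_1, \sigma_2$, the restriction $(f_{\sigma_1}, f_{\sigma_2})$ lies in $\cK_k(\tau)$, so Lemma~\ref{lm:Ktau} provides a unique $e_\tau \in \cE_k(\tau)$ with $(f_{\sigma_1}, f_{\sigma_2}) - e_\tau \in (K_1,0) \oplus (0,K_2)$; the analogous decomposition along a boundary edge is furnished by Remark~\ref{rem:boundary:edge}. Set $e = \sum_\tau e_\tau \in \cE_k$ and $g = f - e \in \cS_k^1(\cM)$. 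To conclude it suffices to check $T_\tau(g) = 0$ for every edge $\tau$: on any face $\sigma$ adjacent to $\tau$, the contributions from $e_{\tau'}$ with $\tau' \neq \tau$ vanish by the off-diagonal fact, while $f_\sigma - e_{\tau,\sigma}$ lies by construction in the ideal $(\ell_\tau^2)$ on $\sigma$, which is the kernel of $T_\tau^\sigma$. Hence $g \in \cF_k$ and $f = g + e$ is the required decomposition.

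The only delicate point is convincing oneself of the off-diagonal vanishing; it is the single fact that drives both halves of the argument, and it rests on the precise Bernstein-support description of the $\cE_k(\tau')$-elements combined with the vanishing of their vertex jets at the endpoints of $\tau'$. Once that is in hand, everything else is straightforward bookkeeping.
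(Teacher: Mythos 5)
Your proof is correct and follows essentially the same route as the paper: peel off an $\cE_k(\tau)$-component edge by edge via Lemma~\ref{lm:Ktau} (and Remark~\ref{rem:boundary:edge} on the boundary), and use support considerations for directness. The only difference is presentational: you isolate the off-diagonal vanishing $T_\tau(e_{\tau'})=0$ for $\tau\neq\tau'$ as an explicit key fact, whereas the paper leaves it implicit in the phrases ``repeating this process'' and ``considering the support.''
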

\begin{proof}
Let $f\in \cK_{k}$ and take an interior edge $\tau\in \cM_{1}$. Let
$\sigma_{1}, \sigma_{2}$ be the two faces of $\cN_{\tau}$.

Then $(f_{\sigma_{1}}, f_{\sigma_{2}})\in \cS^{1}_{k}(\cN_{\tau})\cap \ker T_{0}=\cK_{k}(\tau)$.
By Lemma \ref{lm:Ktau}, $(f_{\sigma_{1}}, f_{\sigma_{2}})=
s_{\tau} + (k_{1},k_{2})$ with $s_{\tau}\in \cE_{k}(\tau)$ and 
$(k_{1},k_{2})\in (K_{1},K_{2})$.
As $s_{\tau}$ lifts to a spline $\in S_{k}^{1}(\cM)$, $f-s_{\tau}$ 
is an element of $S_{k}^{1}(\cM)$, which image by the
Taylor expansion $T_{\tau}$ along the edge $\tau$ vanishes.

If $\tau$ is a boundary edge of $\cM$, which belongs to the face
$\sigma_{1}$, we have a similar decomposition $f_{1}=s_{\tau} +
k_{1}$ with $s_{\tau}\in \cE_{k}(\tau)$ and $k_{1}\in K_{1}$, using
 the convention of Remark \ref{rem:boundary:edge}.
Similarly $s_{\tau}$ lifts to a spline $\in S_{k}^{1}(\cM)$, $f-s_{\tau}$ 
is an element of $S_{k}^{1}(\cM)$ in the kernel of $T_{\tau}$.

Repeating this process for all edges $\tau \in\cM_{1}$, we can
construct an element $\tilde{f}=f-\sum_{\tau\in \cM_{1}} s_{\tau}$ such that
$\forall \tau\in \cM_{1}, T_{\tau}(\tilde{f})=0$, i.e. $\tilde{f}$ belongs to 
$\ker T_{1} = \cF_{k}$.
This shows that $\cK_{k} \subset \cF_{k} + \sum_{\tau\in \cM_{1}}
\cE_{k}(\tau)$. By construction, we have $\cF_{k}\subset \cK_{k}$ and
$\cE_{k}=\oplus_{\tau\in \cM_{1}}\cE_{k}(\tau)\subset \cK_{k}$.
Considering the support of the functions in $\cF_{k}$ and
$\cE_{k}$, we deduce that their sum is direct and equal to $\cK_{k}$.
\end{proof}

\subsection{The dimension formula}
We can now determine the dimension of $\cS_{k}^{1} (\cM)$.
\begin{theorem}\label{thm:dim}
	Let $\ms^{*} = \max \{\ms(\tau)\mid \tau\in
	\cM_{1}\}$. Then, for $k\ge \ms^{*},$ 
	\[ 
		\begin{array}{rclll}
 			\dim \cS_{k}^{1} (\cM) & = & (k-3)^{2} F_{\Box} + \frac{1}{2}(k-5)(k-4) F_{\Delta} \\
			&&+ \sum_{\tau\in \cM_{1}} d_{\tau}(k) 
			+ 4 F_{\Box} + 3 F_{\Delta} -9 F_{1} + 3 F_{0} + F_{+} 
		\end{array}
	\]
	where
	\begin{itemize}
  	\item $d_{\tau}(k)$ is the dimension of the syzygies of the gluing
    	data along $\tau$ in degree $\le k$,
   	\item $F_\Box$ is the number of rectangular faces, $F_{\Delta}$
    	is the number of triangular faces,    
  	\item $F_1$ is the number of edges,    
  	\item $F_0$ (resp. $F_{+}$) is the number of (resp. crossing) vertices,
	\end{itemize}
\end{theorem}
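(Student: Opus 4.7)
The plan is to combine the short exact sequence
\[
0 \longrightarrow \cK_{k} \longrightarrow \cS_{k}^{1}(\cM) \stackrel{T_{0}}{\longrightarrow} G \longrightarrow 0
\]
with the decomposition $\cK_{k}=\cF_{k}\oplus\cE_{k}$ of Proposition \ref{prop:Kk}, giving
\[
\dim \cS_{k}^{1}(\cM) \;=\; \dim \cF_{k} \;+\; \dim \cE_{k} \;+\; \dim G.
\]
The right side has already been essentially computed piecewise in the previous sections; the task is to assemble the three contributions and simplify.

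First I would compute $\dim \cF_{k}$ face by face. A spline in $\cF_{k}$ is a collection of polynomials $f_{\sigma}\in \cR_{k}(\sigma)$ whose Taylor expansion along every edge of $\sigma$ vanishes, i.e.\ $f_{\sigma}\in (\ell_{\tau}^{2})$ for every edge $\tau$ of $\sigma$. Reading this off in the Bernstein--B\'ezier bases of Section~3.1, the admissible control coefficients on a triangle are those $b_{i,j}^{\Delta}$ with $i\ge 2$, $j\ge 2$, $k-i-j\ge 2$, contributing $\tfrac{1}{2}(k-4)(k-5)$; on a rectangle the admissible ones are $b_{i,j}^{\Box}$ with $2\le i,j\le k-2$, contributing $(k-3)^{2}$. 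Since the supports on different faces are disjoint,
\[
\dim \cF_{k} \;=\; (k-3)^{2}\, F_{\Box} \;+\; \tfrac{1}{2}(k-4)(k-5)\, F_{\Delta}.
\]

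Next I would compute $\dim \cE_{k}=\sum_{\tau\in \cM_{1}}\dim \cE_{k}(\tau)$. For an interior edge the corollary following Lemma \ref{lm:Ktau} gives
\[
\dim \cE_{k}(\tau)=d_{\tau}(k)-9+\cross_{\tau}(\gamma)+\cross_{\tau}(\gamma'),
\]
and Remark \ref{rem:boundary:edge} shows the same formula holds for boundary edges with the convention $\cross_{\tau}=0$ there. Hence
\[
\dim \cE_{k} \;=\; \sum_{\tau\in \cM_{1}} d_{\tau}(k) \;-\; 9\,F_{1} \;+\; \sum_{\gamma\in\cM_{0}}\sum_{\tau\ni\gamma}\cross_{\tau}(\gamma),
\]
where I have used that $\sum_{\tau}(\cross_{\tau}(\gamma)+\cross_{\tau}(\gamma'))$ re-indexes as a sum over incidences $(\gamma,\tau)$ with $\gamma\in\tau$. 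Meanwhile, Proposition \ref{prop:H} gives
\[
\dim G \;=\; \sum_{\gamma\in\cM_{0}}\bigl(F(\gamma)+3\bigr) \;-\; \sum_{\gamma\in\cM_{0}}\sum_{\tau\ni\gamma}\cross_{\tau}(\gamma) \;+\; F_{+}.
\]

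The final step is to add the three contributions. The crossing-edge sums cancel exactly between $\dim \cE_{k}$ and $\dim G$, which is the key bookkeeping point. For the incidence term I use that every rectangular face contributes $4$ vertex-incidences and every triangular face contributes $3$, so $\sum_{\gamma\in\cM_{0}} F(\gamma)=4F_{\Box}+3F_{\Delta}$, and $\sum_{\gamma\in\cM_{0}} 3 = 3F_{0}$. Assembling everything yields the stated formula. The main (minor) obstacle is just to verify that interior and boundary edges can be treated uniformly in the $\cE_{k}$ count; this is precisely the content of Remark \ref{rem:boundary:edge}, after which the computation is purely mechanical and the hypothesis $k\ge\ms^{*}$ is used only to apply Proposition \ref{prop:H} and the corollary computing $\dim\cE_{k}(\tau)$.
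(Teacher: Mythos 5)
Your proposal is correct and follows essentially the same route as the paper: the exact sequence for $T_{0}$, the decomposition $\cK_{k}=\cF_{k}\oplus\cE_{k}$ from Proposition \ref{prop:Kk}, the edge-by-edge count of $\dim\cE_{k}(\tau)$ (with Remark \ref{rem:boundary:edge} for boundary edges), Proposition \ref{prop:H} for the image, and the cancellation of the crossing-incidence sums together with $\sum_{\gamma}F(\gamma)=4F_{\Box}+3F_{\Delta}$. The only addition is your explicit Bernstein-coefficient count for $\dim\cF_{k}$, which the paper leaves implicit.
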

\begin{proof}
By definition, we have
\[ 
	\dim \cS_{k}^{1} (\cM) =\dim \cH + \dim \cK_{k}.
\]
By Proposition \ref{prop:Kk}, we have
\begin{eqnarray*}
	\dim \cK_{k} &=& \dim\, \cF_{k} + \dim\, \cE_{k}= \dim \cF_{k} + 
	\sum_{\tau\in \cM_{1}} \dim \cE_{k}(\tau) \\
	& = &(k-3)^{2} F_{\Box}+ \frac{1}{2}(k-5)(k-4) F_{\Delta} +
	\sum_{\tau \in \cM_{1}} (d_{\tau}(k)-9+\cross_{\tau}(\gamma) + \cross_{\tau'}(\gamma'))
\end{eqnarray*}
From Proposition \ref{prop:H},  we deduce that
\begin{eqnarray*}
	\dim \cS_{k}^{1} (\cM) & = &\dim\, \cK_{k}+\dim\, \cH \\
 	& = & (k-3)^{2} F_{\Box}+\frac{1}{2}(k-5)(k-4) F_{\Delta} \\
 	&&+
 	\sum_{\tau=(\gamma,\gamma') \in \cM_{1}} (d_{\tau}(k)-9 + \cross_{\tau}(\gamma) + 
 	\cross_{\tau'}(\gamma'))\\
	&&+ \sum_{\gamma\in\cM_{0}} (F(\gamma)+3) -\sum_{\gamma\in \cM_{0}}\sum_{\tau \ni \gamma}
 	\cross_{\tau}(\gamma) + \sum_{\gamma\in \cM_{0}} \cross_{+}(\gamma)\\
 	&=& F_{\Box} (k-3)^{2} + F_{\Delta} \frac{1}{2}(k-5)(k-4) +
 	\sum_{\tau \in \cM_{1}} d_{\tau}(k) -9 F_{1}\\
 	&& + 4 F_{\Box} + 3 F_{\Delta} + 3 F_{0} + F_{+}
\end{eqnarray*}
since $\sum_{\tau=(\gamma,\gamma')\in \cM_{1}} (\cross_{\tau}(\gamma)
+ \cross_{\tau'}(\gamma'))= \sum_{\gamma\in \cM_{0}}\sum_{\tau \ni
 \gamma} \cross_{\tau}(\gamma) $
and $\sum_{\gamma\in\cM_{0}} F(\gamma)= 4 F_{\Box} + 3 F_{\Delta}$.
\end{proof}

As a direct corollary, we obtain the following result:
\begin{corollary} 
	If $\cM$ is a topological surface with gluing data satisfying the
  compatibility Conditions \ref{cond:comp1}-\ref{cond:comp2} and if all 
  crossing vertices of $\cM$ have 4 edges, then  $\cS_{k}^{1} (\cM)$ is an ample space
  of differentiable functions on $\cM$ for $k\ge \ms^{*}$.
\end{corollary}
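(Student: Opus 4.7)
The plan is to verify ampleness point by point on $\cM$, splitting into three cases depending on whether $\gamma$ lies in the interior of a face, in the interior of an edge, or is a vertex.

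\emph{Vertex case.} This is the heart of the matter. By Proposition \ref{prop:H}, for $k\ge\ms^*$ the Taylor map $T_0$ surjects onto $\prod_{\gamma'\in\cM_0}\cH(\gamma')$, and in particular its projection onto the factor $\cH(\gamma)$ is surjective. With the notation introduced before Proposition \ref{prop:Hg}, the 1-jet of $f$ at $\gamma$ on a face $\sigma_i$ containing $\gamma$ is exactly $(p,q_i,q_{i+1})$ (up to a nonzero scaling on each derivative coming from the Bernstein conventions in \eqref{eq:TaylorBernsteinTriang}, \eqref{eq:TaylorBernsteinQuad}). The constant $p$ is a free parameter of $\cH(\gamma)$, and from the proof of Proposition \ref{prop:Hg} the whole sequence $(q_1,\ldots,q_{F'})$ is obtained from $(q_1,q_2)$ by iterating the $2\times 2$ matrices of \eqref{eq:gc1vertex}, each of which has determinant $-\mb_i(0)\neq 0$ by the topological restriction $\mb_\tau<0$ on interior edges (and trivially on boundary edges). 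Thus every pair $(q_i,q_{i+1})$ is an invertible linear image of $(q_1,q_2)$, so $(p,q_i,q_{i+1})$ ranges over all of $\RR^3$ as $f$ ranges over $\cS_k^1(\cM)$.

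\emph{Interior edge case.} Let $\gamma$ be on an interior edge $\tau$ shared by faces $\sigma_1,\sigma_2$, with $\gamma\notin\cM_0$. The constant spline $(1,\ldots,1)\in\cS_k^1(\cM)$ produces the 1-jet $(1,0,0)$ at $\gamma$ on $\sigma_1$. Bubble polynomials in $\cF_k$ supported on $\sigma_1$ and vanishing to order $1$ on every edge of $\sigma_1$ yield 1-jets of the form $(0,0,\ast)$ at $\gamma$ with $\ast$ arbitrary, since such bubbles glue trivially to the zero function on $\sigma_2$ along $\tau$. The remaining tangential component $(0,\ast,0)$ is supplied by elements of $\cE_k(\tau)$: its dimension $d_\tau(k)-9+\cross_\tau(\gamma_0)+\cross_\tau(\gamma_1)$ grows with $k$, and by construction via \eqref{eq:edgeint1}--\eqref{eq:edgeint2} its restrictions to $\tau$ include polynomials in $u_1$ of any degree $\le k$ vanishing at the two endpoints, which realize any prescribed tangential derivative at the interior point $\gamma\in\tau$.

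\emph{Interior face case.} If $\gamma$ lies in the interior of a face $\sigma$, then any polynomial in $\cR_k(\sigma)$ vanishing to order $1$ along every edge of $\sigma$, extended by $0$ on the other faces, belongs to $\cF_k\subset\cS_k^1(\cM)$. For $k\ge\ms^*$, the subspace of such bubbles is large enough that its 1-jet map at any interior point of $\sigma$ is surjective onto $\RR^3$, by an elementary Bernstein--B\'ezier count using the interior control coefficients.

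The main obstacle is the vertex case, where one must untangle the algebraic structure of $\cH(\gamma)$. Propositions \ref{prop:Hg} and \ref{prop:H} together with Condition \ref{cond:comp1} have already done the bookkeeping, reducing ampleness at a vertex to the invertibility of the partial products of the matrices in \eqref{eq:gc1vertex}, which in turn follows from the sign condition on $\mb_\tau$ built into the topological restriction.
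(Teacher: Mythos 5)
Your overall strategy (checking the three kinds of points separately) is reasonable, and your vertex case is correct and is essentially what the paper implicitly relies on: Proposition \ref{prop:H} gives $T_{0}(\cS_{k}^{1}(\cM))=\prod_{\gamma}\cH(\gamma)$ for $k\ge \ms^{*}$, the value $p$ is a free parameter of $\cH(\gamma)$, and each pair $(q_{i},q_{i+1})$ is the image of $(q_{1},q_{2})$ under a product of matrices of determinant $-\mb_{j}(0)\neq 0$ (the nonvanishing of $b_{\tau}$ on $\tau$ assumed in Section 3 already suffices here; the sign condition $\mb_{\tau}<0$ is not needed). The paper itself states the corollary without proof, as a direct consequence of Theorem \ref{thm:dim}, so your attempt to verify the edge- and face-interior points explicitly is in principle more careful than the source.

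However, the interior-edge case contains a genuine error. By definition $\cF_{k}=\ker T_{1}\cap \cS_{k}^{1}(\cM)$, so a bubble on $\sigma_{1}$ lies in $(\ell_{\tau}^{2})=(v_{1}^{2})$; at a point $\gamma=(u_{0},0)$ of $\tau$ this forces $f(\gamma)=\partial_{u_{1}}f(\gamma)=\partial_{v_{1}}f(\gamma)=0$, so such bubbles contribute the jet $(0,0,0)$, not $(0,0,\ast)$. (If instead you intended $f_{\sigma_{1}}\in(v_{1})$ with nonzero normal derivative glued to $f_{\sigma_{2}}=0$, that pair violates \eqref{eq:edgecond} and is not a spline.) The transversal derivative at an interior point of $\tau$ can only come from the syzygy part: by \eqref{eq:edgeint1} the spline $\Theta_{\tau}(A,B,C)$ has jet $(\ast,\,A(u_{0}),\,-C(u_{0}))$ at $(u_{0},0)$, and the images under $\Theta_{\tau}$ of $u^{2}(1-u)^{2}[A_{j},B_{j},C_{j}]$, $j=1,2$, lie in $\cE_{k}(\tau)$ and give a $2\times 2$ minor equal to $u_{0}^{4}(1-u_{0})^{4}\,(A_{1}C_{2}-A_{2}C_{1})(u_{0})=-u_{0}^{4}(1-u_{0})^{4}\,b(u_{0})\neq 0$, which together with the constant spline spans $\RR^{3}$; but this requires $k\ge \nu_{\tau}+m_{\tau}+4$, which can exceed $\ms^{*}$ (in the round-corner example $\ms^{*}=4$ while $\nu_{\tau}+m_{\tau}+4=5$ and $\dim\cE_{4}(\tau_{i})=1$), so in general the restrictions to $\tau$ of the vertex splines $\cV_{k}(\gamma)$, $\cV_{k}(\gamma')$ must also be brought in. The same quantitative gap affects your face-interior case: the bubble space on a triangle has dimension $\binom{k-4}{2}$, which is $0$ for $k\le 5$ even though $\ms^{*}\le 5$ for flat triangulations, and equals $(k-3)^{2}=1$ for $k=4$ on a rectangle in the round-corner example, so the bubbles alone cannot span the jets there either, and edge and vertex functions are again needed.
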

 
\subsection{Basis}\label{sec:basis}

We are going now to describe an explicit construction of spline functions 
which form a basis of $\cS_{k}^{1}(\cM)$. 
An algorithmic description of the computation of the Bernstein
coefficients of these basis functions is provided in Appendix A.

We assume that $k$ is bigger than the separability $\ms^{*}$ of all edges.

\subsubsection{Basis functions associated to a vertex}
Let $\gamma\in \cM_{0}$ be a vertex and $\sigma_{1},
\ldots,\sigma_{F}$ be the faces of $\cM_{2}$ adjacent to $\gamma$.
We also assume that $\sigma_{i}$ and $\sigma_{i-1}$ share the edge
$\tau_{i}\in \cM_{1}$ and that $\tau_{1}$ is not a crossing edge at
$\gamma$ if such an edge exists.

To compute the basis functions attached to $\gamma$, we compute 
first the Taylor coefficients of $f_{\sigma_{i}} = p + q_{i} u_{i} + q_{i+1} v_{i} +
s_{i} u_{i}v_{i}+ \cdots$ solutions of the system \eqref{eq:recg1.1}-\eqref{eq:recg1.2}
and then lift these Taylor coefficients to define a spline function
with support in $\cO_{\gamma}$.
This leads to the following type of basis functions:
\begin{itemize}
	\item $1$ basis function attached to the value at $\gamma$: $p=1$,
  		$q_{i}=0$, $s_{i}=0$
 	\item $2$ basis functions attached to the derivatives at $\gamma$:
  		$p=0$, $[q_{1},q_{2}] \in \{[1,0],[0,1]\}$ and $s_{i}=0$ if $\tau_{i}$ is
   		not a crossing edge at $\gamma$ and determined by the relations
   		\eqref{eq:recg1.1}-\eqref{eq:recg1.2}
			if $\tau_{i}$ is a crossing edge at $\gamma$.
 	\item $F(\gamma)-\sum_{i=1}^{F'}\cross_{\tau_{i}}(\gamma)+ \cross_{+}(\gamma)$ basis
    	functions attached to the free cross derivatives, with 
      $p=0$, $q_{i}=0$ and $s_{i}\in \{0,1\}$ if $\tau_{i}$ is not a
      crossing edge and determined by the relations
   		\eqref{eq:recg1.1}-\eqref{eq:recg1.2}
			if $\tau_{i}$ is a crossing edge at $\gamma$.
\end{itemize}

\subsubsection{Basis functions associated to an edge}

Let $\tau$ be an edge of $\cM_{1}$ shared by two faces
$\sigma_{1}$, $\sigma_{2}$ with vertices $\gamma, \gamma'$.
Let us assume that the coordinates of these points in the
face $\sigma_{1}$ are $\gamma=(0,0)$ and $\gamma'=(1,0)$.
 
The elements of $\cE_{k}(\tau)$ are the image by $\Theta_{\tau}$ of the
elements of $Z_{k}$ of the form
$P\,[A_{1},B_{1},C_{1}]+ Q\,[A_{2},B_{2},C_{2}]$ with degree
$\deg(P)\le k-\mu_{\tau}-m_{\tau}$,
$\deg(Q)\le k-\nu_{\tau}-m_{\tau}$
which are in the kernel of $T_{\gamma}$ and $T_{\gamma'}$.

From the relation \eqref{eq:ImHg}, we deduce that $P(0)=0$,
$Q(0)=0$.
That is, $P$ and $Q$ are divisible by $u$.
\begin{itemize}
	\item If $\cross_{\tau}(\gamma)=0$, i.e. $\gamma$ is not a crossing vertex,
		we have $B_{1}(0)C_{2}(0)-B_{2}(0) C_{1}(0)=a(0)\neq 0$ and
 		the relation \eqref{eq:ImHg} implies that $P'(0)=0$,
 		$Q'(0)=0$. That is $P=u^{2} \tilde{P}$, $Q=u^{2}\tilde{Q}$.
 	\item If $\cross_{\tau}(\gamma)=1$, then the kernel of $T_{\gamma}$ is
		generated by polynomials such that $P(0)=0$,
		$Q(0)=0$, $P'(0)=\lambda C_{2}(0)$, $Q'(0)=-\lambda C_{1}(0)$.
		That is $P=u\,(\lambda C_{2}(0)+ u \tilde{P})$, $Q=u\, (-\lambda
		C_{1}(0) + u \, \tilde{Q})$.
\end{itemize}
That is
\[
	P=u\, \left(\lambda \,\cross_{\tau}(\gamma)\, C_{2}(0) + u \,\tilde{P} \right),
	Q=u\, \left(-\lambda\, \cross_{\tau}(\gamma)\, C_{1}(0) + u \,\tilde{Q}\right).
\]
By symmetry at $\gamma'$, we see that $P$ and $Q$ are of the form:
\begin{eqnarray*}
	P&=&u\,(1-u) \left(\lambda \,\cross_{\tau}(\gamma)C_{2}(0)\, (1-u)+ 
	\lambda'\, \cross_{\tau}(\gamma')\,C_{2}(1) \,u+ u\,(1-u)\,\tilde{P}\right),\\
	Q&=&-u\,(1-u) \left(\lambda \,\cross_{\tau}(\gamma)\,C_{1}(0)\, (1-u)+ 
	\lambda' \,	\cross_{\tau}(\gamma')\, C_{1}(1)\, u+ u\,(1-u)\,\tilde{Q}\right), 
\end{eqnarray*}
with $\lambda,\lambda'\in \RR$, $\deg(\tilde{P})\le k-\mu-m-4$,
$\deg(\tilde{Q})\le k-\nu-m-4$.

We construct a basis of $\cE_{k}(\tau)$ by taking the 
image by $\Theta_{\tau}$ of a maximal set of linearly independent
elements of this form (see Section \ref{sec:polybasis}).
This yields $d_{\tau}(k)-9 + \cross_{\tau}(\gamma)+\cross_{\tau}(\gamma')$ spline basis functions.

\subsubsection{Basis functions associated to a face}

Finally, we define the basis functions attached to a face $\sigma\in
\cM_{2}$ as the 2-interior Bernstein basis functions in degree $\le k$.
There are $(k-3)^{2}$ such basis spline functions for a rectangular face and
$k-4 \choose 2$ for a triangular face.

\section{Examples}

\subsection{Splines on flat triangular tilings}
We consider a subdivision of a planar domain $\Omega \subset \RR^{2}$
into a partition of triangles and the topological surface $\cM$ induced by this subdivision.

For two faces $\sigma_{1}, \sigma_{2}\in \cM_{2}$, which share an edge
$\tau \in \cM_{1}$ at a vertex $\gamma$, there is a linear
map $\phi_{\sigma_{2}, \sigma_{1}}$, which transforms
the variables  $(u_{1}, v_{1})$ attached to $\sigma_{1}$
into
the variables $(u_{2},v_{2})$ attached to $\sigma_{2}$.

With $\gamma=(0,0)$ and $v_{1}=u_{2}$, the transition map 
$\phi_{\sigma_{2}, \sigma_{1}}$ is given by 
\[
	\left[
  	\begin{array}{c}
   	 u_{2}\\ 
    	v_{2}
    \end{array}
	\right] 
	= \left[
  \begin{array}{cc}
    0 & b\\
    1 & a 
	\end{array}
	\right]\,
	\left[
  \begin{array}{c}
    u_{1}\\ 
    v_{1}
  \end{array}
	\right] 
\]
where $a,b\in \RR$ and $b\neq 0$. We choose these constant transition
maps to define the space of splines $\cS^{1}(\cM)$. The gluing
conditions along the edges correspond then to $C^{1}$ conditions for
the polynomials expressed in the same coordinate system.
In this case, $\cS^{1}(\cM)$ is the vector space of piecewise polynomial functions on $\cM$,
which are $C^{1}$ on $\Omega$,
that is, the classical $C^{1}$-spline functions on $\Omega$.

If $a = 0$, the edge of $\sigma_{2}$ at
$\gamma$ distinct from $\tau$ is aligned with the edge of $\sigma_{1}$
at $\gamma$ distinct from $\tau$. The vertex $\gamma$ is a crossing vertex
($\cross_{+}(\gamma)=1$; all the coefficients $a$ in the transition maps
around $\gamma$ vanish) if there are $4$ edges at $\gamma$, which are
pair-wise aligned.

As for any interior edge $\tau\in \cM_{1}$ the transition map is
constant, we have $n_{\tau}=0$, $\mu_{\tau}=0$, $\nu_{\tau}=0$,
$m_{\tau}=1$, $\ms_{\tau}\le 5$ and $d_{\tau}(k)= 2\, k$.
For the boundary edges, we have $d_{\tau}(k)= 2\,k + 2$ (see Remark \ref{rem:boundary:edge}).

We deduce from Theorem \ref{thm:dim} that for $k\geq 5$, we have
\begin{eqnarray*}
  \dim \cS^{1}_{k} & = & {1\over 2} (k-5)(k-4) F_{\Delta}
                         +  2k\, F_{1}^{o} + (2k+2)\,F_{1}^{b} + 3 F_{\Delta}
                       -9\, F_{1} +3\, F_{0} + F_{+}\\
  & = & {1\over 2} (k+2)(k+1)\, F_{\Delta} - 6\,(k-2) \, F_{\Delta} +
        (2k-9)\,  F_{1}^{o} + (2k-7)\, F_{1}^{b} 
                  + 3\, F_{0} + F_{+}
\end{eqnarray*}
where $F_{1}^{o}$ (resp. $F_{1}^{b}$) is the number of interior (resp. boundary) edges.
Using the relations $3 F_{\Delta}= 2 F_{1}^{o} + F_{1}^{b}$
(counting the edges per triangle, we count twice the interior edges
shared by two triangles and once the boundary edges),
$F_{1}^{b} = F^{b}_{0}$ and $F_{0}= F_{0}^{o}+ F_{0}^{b}$
where $F_{0}^{o}$ (resp. $F_{0}^{b}$) is the number of interior
(resp. boundary) vertices, we obtain
\begin{eqnarray*}
	\dim \cS^{1}_{k} & = & {1\over 2} (k+2)(k+1)\, F_{\Delta} - (2 k+1) \, 
	F_{1}^{o}  + 3\, F^{o}_{0} + F_{+}.
\end{eqnarray*}
This coincides with the dimension formula of $C^{1}$
piecewise polynomials of degree $k\geq 5$ on a triangular
planar mesh, given in \cite{ms}. Here $F_{+}$ counts the number of crossing
vertices, also called singular vertices in \cite{ms}. 

The basis functions constructed as in Section \ref{sec:basis} are as follows:
\begin{itemize}
	\item For each vertex $\gamma$, there are 3 basis functions associated to
  	the evaluation and derivatives in $x,y$ at $\gamma$. There are
  	$F(\gamma)-\sum_{\tau\ni \gamma}\cross_{\tau}(\gamma)+ \cross_{+}(\gamma)$ 
  	basis functions associated to the free
 		cross-derivatives on the triangles containing  $\gamma$.
	\item For each interior edge $\tau=(\gamma,\gamma')$, there are $2k-9 
  	+ \cross_{\tau}(\gamma)+\cross_{\tau}(\gamma')$ basis functions
  	associated to $k+1-6 = k-5$ free interior Bernstein coefficients 
  	$b_{3,0},\ldots,b_{k-3,0}$ on the edge, 
  	$k-4$ free interior Bernstein coefficients $b_{2,1},\ldots, b_{k-2,1}$ on one triangle 		$\sigma$ which contains
  	$\tau$ and $b_{2,0}$ (resp. $b_{k-2,0}$) if $\tau$ is a crossing vertex
  	at $\gamma$ (resp. $\gamma'$).
	\item For each boundary edge $\tau'$, there are $2k-7$ basis functions
  	associated to $k-3$ free interior Bernstein coefficients 
  	$b_{2,0},\ldots,b_{k-2,0}$ on the edge $\tau'$, 
		and $k-4$ free interior Bernstein coefficients $b_{2,1},\ldots,
		b_{k-2,1}$ on the triangle $\sigma$ which contains $\tau'$.
	\item For each triangle $\sigma$, there are ${k-4 \choose 2}$ 
		basis functions 	associated to
		the interior Bernstein coefficients $b_{i,j}$ 
		with $2\leq i,j \leq k-2$ and $0\le i+j\le k$.
\end{itemize}
This basis description involves the Bernstein coefficients of
polynomial on the triangles. The basis differs from the nodal basis proposed
in \cite{ms}. From the listed Bernstein coefficients, we can however
recover the nodal basis of \cite{ms}, dual to the evaluation and
derivatives at the vertices and at interior points of the edges and the triangles.

\subsection{A round corner}

We consider a mesh $\cM$ composed of $3$ rectangles $\sigma_{1},\sigma_{2},\sigma_{3}$ glued
around an interior vertex $\gamma$, along the $3$ interior edges $\tau_{1},
\tau_{2}, \tau_{3}$. There are $6$ boundary edges and $6$ boundary vertices.
\begin{figure}[ht]
	\begin{center}
	\includegraphics[width=5.4cm]{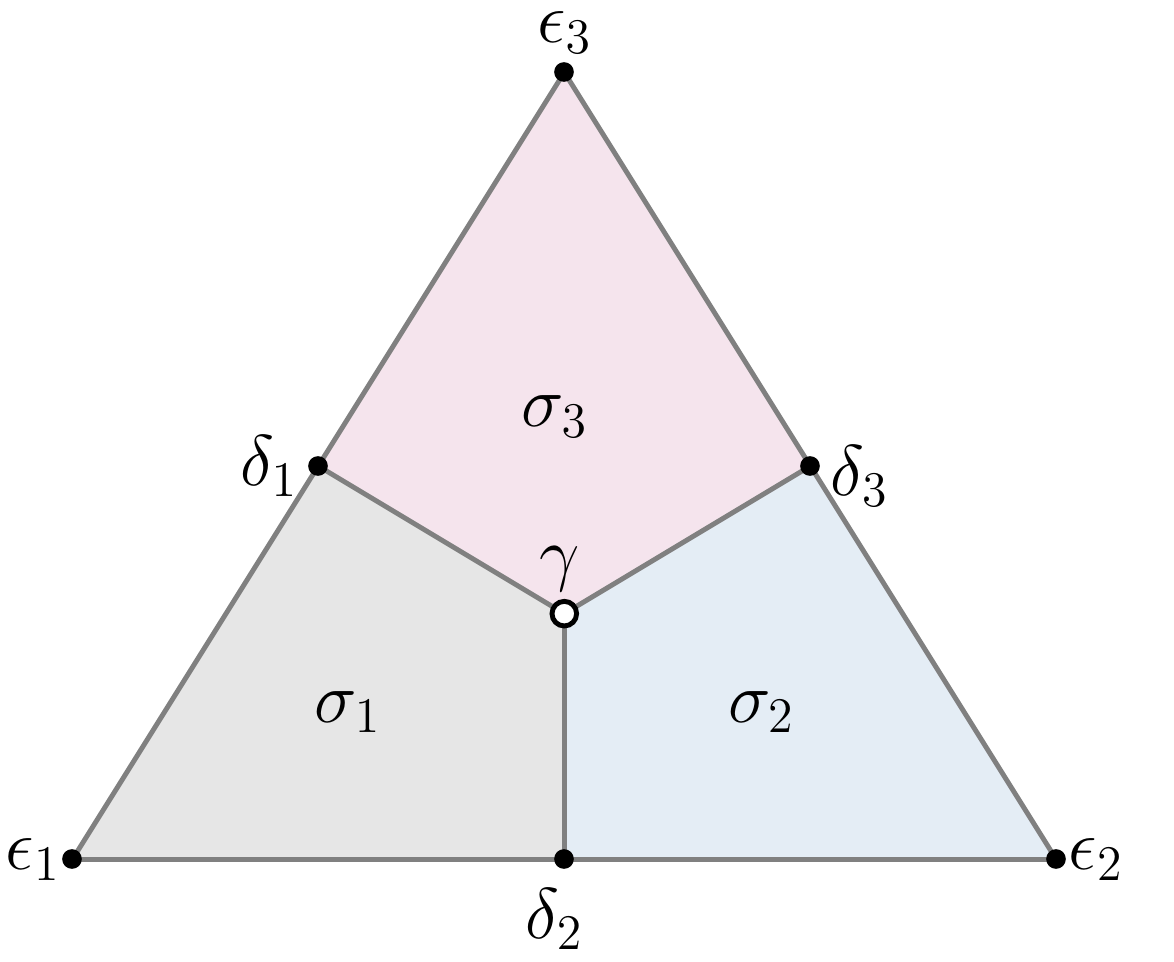}
	\end{center} 
	\vspace{-0.5cm}
	\caption{Smooth corner.}\label{fig:roundcorner}
\end{figure}

We take symmetric gluing data at $\gamma$ and at the crossing boundary vertices $\delta_{i}$.
The transition map across
the interior edge $\tau_{i}$ is
given by the polynomials:
$[a,b,c] = [(u-1), -1, 1 ]$
where $\gamma$ is the end point with $u=0$ and $\delta_{i}$ is the end
point with $u=1$.
The generating syzygies are
\[ 
	S_{1}=[0, 1, 1],
	S_{2}=[1, u, 1].
\]
For the interior edges $\tau_{i}$, we have $n=1$, $m=0$, $\mu=0$, $\nu=1$ and $d_{\tau_{i}}(k)= k+1 + k =
2k+1$.
For the boundary edges $\tau'$, we have $n=0$, $m=0$, $\mu=0$, $\nu=0$
and $d_{\tau'}(k)=2\, (k+1)$.

As $a(0)=-1$ (resp. $a(1)=0$),
$\gamma$ is not a crossing vertex ($\cross_{\tau_{i}}(\gamma)=0$) and $\delta_{i}$
is a crossing vertex of $\tau_{i}$  ($\cross_{\tau_{i}}(\delta_{i})=1$).

We check that the separability of all the interior edges is $4$. For $k=4$, the dimension of $S_{k}^{1}(\cM)$ is
\[ 
	3 \times  (4-3)^{2}  
	+ 3 \times (2 \times 4+1 )  % 3 d_{\tau}(k)
	+ 6 \times (2 \times 4+2 )  % 6 d_{boundary}(k)
	+ 4\times 3   % 4 F_{\Box}  % Interior vertex functions at gamma
	- 9 \times 9   % -9 F1
	+ 3\times 7   % 3 F0
	+ 6        % F_{+}
	= 48.
\]
The basis functions are constructed as in Section \ref{sec:basis}, using the
algorithms of Appendix A.

\begin{itemize}
	\item The number of basis functions attached to $\gamma$ is $6= 1+2+3$.
	\begin{itemize}
		\item The basis function associated to the value at $\gamma$ is
		\[
			[
  		b_{0,0}+b_{1,0}+b_{0,1}+b_{1,1},
  		b_{0,0}+b_{1,0}+b_{0,1}+b_{1,1},
  		b_{0,0}+b_{1,0}+b_{0,1}+b_{1,1}
  		]
		\]
		\item The two basis functions associated to the derivatives at $\gamma$ are
		\[ 
		\begin{array}{l}
  	\;\bigl[
		\frac{1}{4}\,b_{{1,0}}+\frac{1}{4}\,b_{{1,1}}+{\frac {7}{12}}\,b_{{2,0}}+
		{\frac {7}		{12}}\,b_{{2,1}}+\frac{1}{8}\,b_{{1,2}},\\
		\quad	-\frac{1}{4}\,b_{{0,1}}-\frac{1}{4}\,b_{{1,1}}-\frac{1}{8}\,b_{{2,1}}-{\frac
  	{7}{12}}\,b_{{0,2}}-b_{{1,2}},\\
  	\quad \frac{1}{4}\,b_{{0,1}}-\frac{1}{4}\,b_{{1,0}}-{\frac {7}{12}}\,b_{{2,0}}
  	-{\frac {11}{24}}\,b_{{2,1}}+{\frac {7}{12}}\,b_{{0,2}}
  	+{\frac {7}{8}}\,b_{{1,2}}
  	\bigr]\\
		\;\bigl[
		\frac{1}{4}\,b_{{0,1}}+\frac{1}{4}\,b_{{1,1}}+\frac{1}{8}\,b_{{2,1}}
		+{\frac{7}{12}}\,b_{{0,2}}+b_{{1,2}},\\
		\quad -\frac{1}{4}\,b_{{0,1}}+\frac{1}{4}\,b_{{1,0}}+{\frac {7}{12}}\,
  	b_{{2,0}}+{\frac {11}{24}}\,b_{{2,1}}-{\frac {7}{12}}\,b_{{0,2}}-
  	{\frac {7}{8}}\,b_{{1,2}},\\
  	\quad -\frac{1}{4}\,b_{{1,0}}-\frac{1}{4}\,b_{{1,1}}-{\frac {7}{12}}\,
  	b_{{2,0}}-{\frac {7}{12}}\,b_{{2,1}}-\frac{1}{8}\,b_{{1,2}}
  	\bigr]
		\end{array}
		\]
	\item The three basis functions associated to the cross derivatives at $\gamma$ are 
		\[
		\begin{array}{l}
		\; \bigl[
		\frac{1}{16}\,b_{{1,1}}-\frac{1}{12}\,b_{{2,0}}-\frac{1}{24}\,b_{{2,1}}
		-\frac{1}{12}\,b_{{0,2}}-\frac{1}{8}\,
		b_{{1,2}},\\\quad -\frac{1}{12}\,b_{{2,0}}-\frac{1}{12}\,b_{{2,1}},
		-\frac{1}{12}\,b_{{0,2}}-\frac{1}{6}\,b_{{1,2}}
		\bigr]\\
		\;\bigl[
		-\frac{1}{12}\,b_{{0,2}}-\frac{1}{6}\,b_{{1,2}},\\
		\quad \frac{1}{16}\,b_{{1,1}}-\frac{1}{12}\,b_{{2,0}}-\frac{1}{24}
		\,b_{{2,1}}-\frac{1}{12}\,b_{{0,2}}-\frac{1}{8}\,b_{{1,2}},
		-\frac{1}{12}\,b_{{2,0}}-\frac{1}{12}\,b_{{2,1}}
		\bigr]\\
		\;\bigl[
		-\frac{1}{12}\,b_{{2,0}}-\frac{1}{12}\,b_{{2,1}},\\
		\quad -\frac{1}{12}\,b_{{0,2}}-\frac{1}{6}\,b_{{1,2}},\frac{1}{16}
		\,b_{{1,1}}-\frac{1}{12}\,b_{{2,0}}-\frac{1}{24}\,b_{{2,1}}
		-\frac{1}{12}\,b_{{0,2}}-\frac{1}{8}\,b_{{1,2}}
		\bigr]
		\end{array}  
		\]
	\end{itemize}
 	\item The number of basis functions attached to $\delta_{i}$ is 
 		$4= 1+2+2-1$. Here are the $4$ basis functions associated to $\delta_{1}$: 
		\[
			\begin{array}{l}
			\;[b_{3,0}+b_{3,1}+b_{4,0}+b_{4,1}, 0, b_{0,3}+b_{1,3}+b_{0,4}+b_{1,4} ],\\
			\;[b_{3,0}+b_{3,1}, 0, b_{0,3}+b_{1,3} ],\\
			\;[b_{3,1}+b_{4,1}, 0, -b_{1,3}-b_{1,4} ],\\
  		\;[b_{3,1}, 0,-b_{1,3} ].
			\end{array}
		\]
		The basis functions associated to the other boundary points 
		$\delta_{2},\delta_{3}$ are obtained by cyclic permutation.
	\item The number of basis functions attached to the remaining boundary
  	points is $4= 1+2+1$. For $\epsilon_{1}$, the $4$ basis functions  are 
		\[
		\;[b_{3,3}+b_{3,4}+b_{4,3}+b_{4,4}, 0,0 ],\\
		\;[b_{3,3}+b_{4,3}, 0, 0 ],\\
		\;[b_{3,3}+b_{3,4}, 0, 0 ],\\
		\;[b_{3,3}, 0, 0 ]
		\]
		The basis functions associated to the other boundary points are
		obtained by cyclic permutation.
	\item The number of basis functions attached to the edge $\tau_{i}$
 		is $2\times 4-7=1$. For the edge $\tau_{1}$, it is
		\[ 
			[b_{2,1}, 0, -b_{1,2}].
		\]
		The basis functions associated to the other interior edges are obtained by 
		cyclic permutation.

	\item The number of basis functions attached to the boundary edges is
  	$2 (4-3)=2$. For the boundary edge $(\epsilon_{1},\delta_{1})$ of $\sigma_{1}$, 
  	the two basis functions are
		\[ 
			[ b_{3,2}, 0, 0],  [b_{4,2},0,0].
		\]
	\item The number of basis functions attached to a face $\sigma_{i}$ is 
		$(4-3)^{2}=1$. The basis function associated to $\sigma_{1}$ is
		\[
			[b_{2,2},0,0]
		\]
		and the two other ones are obtained by cyclic permutation.
\end{itemize}

\subsection{A pruned octahedron}
We consider a mesh $\cM$ with 6 triangular faces \fc{AEF}, \fc{CEF},
\fc{ABE}, \fc{BCE}, \fc{ADF}, \fc{CDF}
and one rectangular face \fc{ABCD},
depicted in Figure \ref{fig:aocta} as the {\em Schlegel diagram} of a
convex polyhedron in $\RR^3$.
It is an octahedron where an edge \fc{BD} is removed and
two triangular faces are merged into a rectangular face (see
\cite{VidunasAMS} for the complete octahedron, which involves only
triangular faces).

\begin{figure}[ht]
	\begin{center}
	\includegraphics[width=5cm]{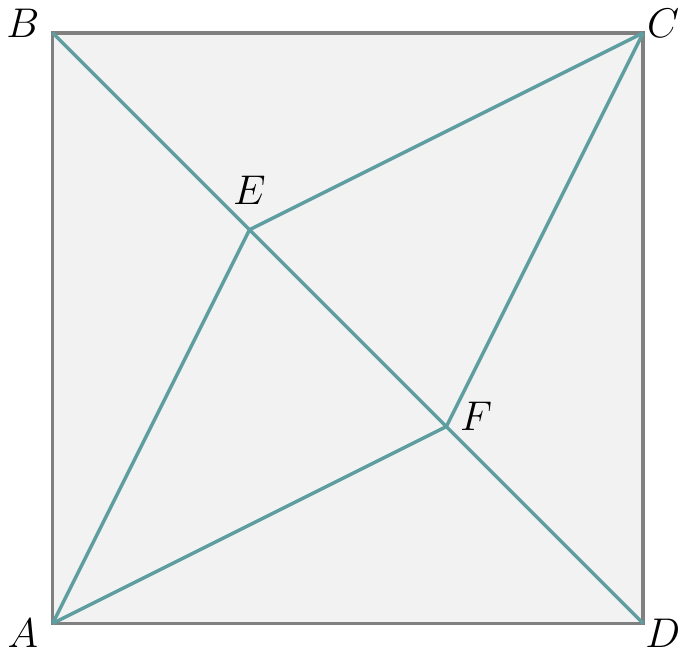}
	\end{center} 
	\vspace{-0.5cm}
	\caption{A pruned octahedron}\label{fig:aocta}
\end{figure}

We are going to use the following notation for the variables
on the faces of $\cM$. For $X,Y\in \RR^{2}$ two vertices defining an
edge \fc{XY} of a face $\sigma$, let 
$u_{X}^{Y}:\RR^2\to\RR$ be the linear function with
$u_{X}^Y(X)=0$,
$u_{X}^Y(Y)=1$ and $u_{X}^{Y}(Z)=0$ for all the points $Z$ on the 
edge of $\sigma$ through $X$ and distinct from $XY$.
We will use these linear functions $u_E^B$, $u_F^A$, etc., as
variables on the different faces.
As the restriction on a share edge \fc{XY} of the two functions
defined on the faces adjacent to \fc{XY} coincide, there is no
ambiguity in evaluating these linear functions on $\cM$.
For a triangular face \fc{XYZ}, we have 
$u_{X}^{Y}+u_{Y}^{Z}+u_{Z}^{X}=1$.
For a rectangular face \fc{XYZW}, we have 
$u_{X}^{Y}=u_{W}^{Z}$ and $u_{X}^{W}=u_{Y}^{Z}$.
We denote by $\partial_{XY}$ the derivative with respect to the
variable $u_{X}^{Y}$. It is such that $\partial_{XY}(u_X^Y)=1$. 
On a triangular face \fc{XYZ}, we have 
$\partial_{XY}+\partial_{YZ}+\partial_{ZX}=0$.

We use a symmetric gluing at all the vertices and therefore the
vertices $A,C,E,F$ are crossing vertices. Let us describe how we 
construct the gluing data on the edges by interpolation at the
vertices, in a smaller degree than the degree associated to the gluing
\eqref{eq:hahngluing} proposed in \cite{Hahn87}.

In terms of differentials (see relation \eqref{eq:differentialrelation}), the symmetric gluing at the vertices translates as 
\begin{align} \label{eq:horder4}
	\partial_{EA}+\partial_{EC}=0, \qquad \partial_{EB}+\partial_{EF}=0 
	\qquad \mathrm{at}\ 	E,\nonumber \\
	\partial_{FA}+\partial_{FC}=0, \qquad \partial_{FD}+\partial_{FE}=0 
	\qquad \mathrm{at}\ F,\nonumber \\
	\partial_{AB}+\partial_{AF}=0, \qquad \partial_{AE}+\partial_{AD}=0 
	\qquad \mathrm{at}\ A,\\
	\partial_{CB}+\partial_{CF}=0, \qquad \partial_{CE}+\partial_{CD}=0
	\qquad \mathrm{at}\ C, \nonumber
\end{align}
and around the vertices of order 3:
\begin{equation*} 
	\partial_{BA}+\partial_{BC}+\partial_{BE}=0 \quad \mathrm{at}\ B, \qquad 
	\partial_{DA}+\partial_{DC}+\partial_{DF}=0 \quad \mathrm{at}\ D.
\end{equation*}
For gluing the triangles \fc{EFA} and \fc{EFC} along $EF$,
we interpolate the following relations between the derivatives:
\begin{equation*}
	\partial_{EA}+\partial_{EC}=0 \ \mathrm{at}\ E,
	\quad \partial_{EA}+\partial_{EC}=2\,\partial_{EF}  \ \mathrm{at}\ F,
\end{equation*}
where the second expression is $\partial_{FA}+\partial_{FC}=0$ rewritten using
$\partial_{FA}=\partial_{EA}-\partial_{EF},
\partial_{FC}=\partial_{EC}-\partial_{EF}$.
We choose the linear interpolation
\begin{equation*}
	\partial_{EA}+\partial_{EC}=2u_E^F\,\partial_{EF}.
\end{equation*}
Thereby we have $\ma_{EF}=2u_E^F$, $\mb_{EF}=-1$ and the gluing data
for the edge $EF$ is $[2\,u_E^F, -1, 1]$. 

For the edge \fc{EB} between the triangles \fc{EBA} and \fc{EBC},
we interpolate the following relations:
\[
	\partial_{EA}+\partial_{EC}=0 \ \mathrm{at}\ E,
	\qquad \partial_{EA}+\partial_{EC}=3\,\partial_{EB}\ \mathrm{at}\ B,
\]
where the latter relation is $\partial_{BA}+\partial_{BC}+\partial_{BE}=0$ rewritten using
$\partial_{BA}=\partial_{EA}-\partial_{EB}, 
\partial_{BC}=\partial_{EC}-\partial_{EB}, 
\partial_{BE}=-\partial_{EB}$. 
Additionally, we have to take into account the compatibility conditions
\eqref{eq:ddv1}-\eqref{eq:ddv2} since $E$ is a crossing vertex. It translates as 
$\partial_{EB} (\ma_{EB})=\partial_{EF}(\ma_{EF})$
and $\partial_{EC} (\ma_{EC})=\partial_{EA}(\ma_{EA})$ at the vertex $E$.
This leads to the following gluing data on the edge \fc{EB}:
\[
	\fc{EB}: [ 2u_E^B+(u_E^B)^2, -1,1].
\]
Similarly, the gluing data of the edge \fc{FD} is
\[
	\fc{FD}: [2u_F^D+(u_F^D)^2, -1,1].
\]
The edges $EA$, $EC$, $FA$, $FC$ connect cross vertices just as $EF$
and yield linear gluing data
\begin{align*}
	\fc{EF}: \quad [2u_E^A,  -1,1],\qquad
	\fc{EC}: \quad [2u_E^C, -1,1],\\
	\fc{FA}: \quad [2u_F^A, -1,1],\qquad
	\fc{FC}: \quad [2u_F^C, -1,1]
\end{align*}
We check that the compatibility conditions
\eqref{eq:ddv1}-\eqref{eq:ddv2} are satisfied across $EA,EC$ and $FA,FC$.
The gluing data along $AB$, $AD$, $CB$, $CD$ looks the same:
\begin{align*} 
	\fc{AB}: \quad[2u_A^B,-1,1], \qquad
	\fc{AD}: \quad[2u_A^D,-1,1], \\
	\fc{CB}: \quad[2u_C^B,-1,1], \qquad
	\fc{CD}: \quad[2u_C^D,-1,1].
\end{align*}
We have linear gluing data everywhere except on the edges \fc{EB} and
\fc{FD}. Let us analyze the syzygies associated this data.
\begin{itemize}
	\item For the edges \fc{EB} and
		\fc{FD} with one crossing vertex, the gluing data is of
		the form $[2 u + u^{2},-1,1]$.
		We have $n=2$ and $m=1$ since the edge is connecting two triangles,
		$\mu=0$ and $\nu=2$ and $d(k)=2 k-2$. 
		The $\mu$-basis is $[0,1,1]$, $[-1, -2u-u^{2},0]$.
		The separability is achieved in degree $k\ge 6$ and not $5$ as it
		could be expected ($d(5)\ge 8$).
 	\item For the edges \fc{EA}, \fc{EA}, \fc{FA}, \fc{FC}, \fc{EF} between 
		triangular faces, with two crossing vertices,
		the linear gluing data is of the form $[2u,1,-1]$.
		We have $n=1$ and $m=1$, $\mu=0$, $\nu=1$ and $d(k)=2 k-1$. 
		The $\mu$-basis is $[0,1,1]$, $[-1, -2u,0]$.
		The separability is achieved in degree $k\ge 4$.
	\item For the edges \fc{AB}, \fc{AD}, \fc{CB}, \fc{CD} between 
		a triangular face and a rectangular face, with one crossing vertices,
		the linear gluing data is of the form $[2u,1,-1]$.
		We have $n=1$ and $m=0$, $\mu=1$ since  the degree of the
		homogeneization $[d_{a},d_{b},d_{c}]$ (see Definition \ref{def:Fm}) is
		$[3,2,1]$ or $[3,1,2]$, $\nu=1$ and $d(k)=2 k$. 
		The $\mu$-basis is $[0,1,1]$, $[-1, -2u,0]$.
		The separability is also achieved in degree $k\ge 4$.
\end{itemize}
Now we count how many splines do we have in degree $k\ge 6$:
\begin{itemize}
	\item For the four crossing vertices $A,C,E,F$ we have 
		$1+2+1=4$ dimensions and $1+2+3=6$ dimensions for $B$ and for $D$.
		In total we have $4\cdot4+2\cdot 6=28$ degrees of freedom around the
		vertices of $\cM$.
	\item For the edges \fc{EB} and \fc{FD}, we have $2(k-2)-8=2k-12$ dimensions.
  	For the edges \fc{EA}, \fc{EA}, \fc{FA}, \fc{FC}, \fc{EF}, we have
  	$2k -1- 7=2k-8$ dimensions.
  	For the edges \fc{AB}, \fc{AD}, \fc{CB}, \fc{CD}, we have
  	$2k-8$.
	\item For the 6 triangular faces, we have $\,{k-4\choose 2}$
   	dimensions and for the rectangular face $(k-3)^{2}$.
\end{itemize}  
The dimension formula in degree $k\ge 6$ is then
\begin{equation*}
	28-4+11\cdot2\,(k-4)+6\,{k-4\choose 2}+(k-3)^2 =(2k-3)^2+k-4.
\end{equation*}
For $k=6$, the dimension is $83$.
It turns out that this formula also holds for degree $k=4$, $k=5$.

The construction of basis functions can be done as described in
Section \ref{sec:basis}.
Let us give the basis functions associated to the value and first
derivatives at the point $A$.
Here are the Bernstein coefficients in degree $4$ of the basis function for the value at $A$
with the vertex A represented in the center and the edges represented by horizontal and vertical central lines (in bold): 
\begin{align*}
	\begin{array}{ccccccc}
		&&& \bf 0 \\[-5pt]
		& \iddots\!\! & 0 & \bf \frac12 & 0 & \!\!\ddots \\[2pt]
		& 0 & 1 & \bf 1 & 1 & 0 \\[2pt]
		\bf 0& \bf 0 & \bf 1 & \bf 1 & \bf 1 & \bf \frac12 & \bf 0 \\[2pt]
		0 & \!\!-1\!\! & 1 & \bf 1 & 1 & 0 \\[-5pt]
		\vdots & 0 & \!\!-1\!\! & \bf 0 & 0 & \!\!\iddots  \\[1pt]
		&  \!\!\cdots\!\!  &0 & \bf 0 \\
	\end{array} \qquad
	\begin{picture}(130,20)(-30,-4)
		\put(0,0){\vector(1,0){100}}  \put(0,0){\vector(-1,0){0}}
		\put(50,-50){\vector(0,1){100}}   \put(50,-50){\vector(0,-1){0}}
		\put(52,4){$A$}  \put(92,5){$F$} \put(53,42){$E$}  
	\put(0,5){$B$} \put(53,-50){$D$}  
\end{picture}
\end{align*}
This gives the following specializations to the polygons \fc{ABE}, \fc{AEF}, \fc{AFD}, \fc{ABCD} (respectively), selectively de-homogenized:
\begin{align*}
	& (u_{E}^A)^2 \, (1+3u_{A}^B)(1+2u_{A}^E-u_{A}^B), \\
	& (u_{E}^A)^2 \, (1+2u_{A}^F+2u_{A}^E+6u_{A}^Fu_{A}^E),\\
	& (u_{F}^A)^2 \, (1+3u_{A}^D)(1+2u_{A}^F-u_{A}^D), \\
	& (u_{B}^Au_{D}^A)^2 \left(u_{B}^Au_{D}^A(1+3u_{A}^D)(1+3u_{A}^B)
	-24u_{A}^Du_{A}^B(u_{AB}^Du_{B}^A+u_{A}^Bu_{D}^A)\right)\!,
\end{align*}
and $0$ on the other faces.
The basis function associated to the first derivative in one of the
directions at the cross vertices is:
\begin{align} \label{eq:devspline}
	\begin{array}{ccccccc}
		&&& \bf 0 \\[-5pt]
		& \iddots\!\! & 0 & \bf \frac16 & 0 & \!\!\ddots \\[2pt]
		& 0 & \frac13 & \bf \frac14 & \frac13 & 0 \\[2pt]
		\bf 0& \bf 0 & \bf 0 & \bf 0 & \bf 0 & \bf 0 & \bf 0 \\[2pt]
		0 & \frac{7}{24} & \!\!-\frac5{16}\! & \bf \!\!-\frac14\! & \!\!-\frac13\! 
		& 0 \\[-5pt]
		\vdots & 0 & \!\!-\frac16\!\! & \bf 0 & 0 & \!\!\iddots  \\[1pt]
		&  \!\!\cdots\!\!  &0 & \bf 0 \\
	\end{array} 
\end{align}
The non-zero specializations to  \fc{ABE}, \fc{AEF}, \fc{AFD}, \fc{ABCD}  are, respectively:
\begin{align*}
	& (u_{E}^A)^2 \,u_{A}^E\, (1+3u_{A}^B), \  
	(u_{E}^A)^2 \,u_{A}^E\, (1+3u_{A}^F), \,
	-(u_{F}^A)^2 \,u_{A}^D\,  (u_{F}^A+4u_{A}^F), \\
	& (u_{B}^Au_{D}^A)^2 \,u_{A}^D \left(-u_{D}^A(1+3u_{A}^B)+7u_{A}^Bu_{B}^Au_{A}^D
	\right).
\end{align*}
The basis function for the derivative in the other direction is obtained by a mirror image of (\ref{eq:devspline}).

The basis function corresponding to the cross derivatives is realized by
\begin{align*}
	\begin{array}{ccccccc}
		&&& \bf 0 \\[-5pt]
		& \iddots\!\! & 0 & \bf 0 & 0 & \!\!\ddots \\[2pt]
		& 0 & \!\!-\frac1{12}\!\! & \bf 0 & \frac1{12} & 0 \\[2pt]
		\bf 0& \bf 0 & \bf 0 & \bf 0 & \bf 0 & \bf 0 & \bf 0 \\[2pt]
		0 & \frac{1}{24} & \frac1{16} & \bf 0 & \!\!-\frac1{12}\!\! & 0 \\[-5pt]
		\vdots & 0 & \frac1{24} & \bf 0 & 0 & \!\!\iddots  \\[1pt]
		&  \!\!\cdots\!\!  &0 & \bf 0 \\
	\end{array} 
\end{align*}
In this spline, we could modify the 0 entry next to two $\frac1{24}$ entries
to $\frac1{36}$, so to lower the degree of the specialization to the rectangle.
After the modification, the 4 non-zero specializations would be
\begin{align*}
	-(u_{E}^A)^2 \,u_{A}^E\, u_{A}^B, \ (u_{E}^A)^2 \,u_{A}^E\, u_{A}^F, \,
	-(u_{F}^A)^2 \,u_{A}^D\, u_{A}^F,  \, (u_{B}^Au_{D}^A)^2 \,u_{A}^D \, u_{A}^B.
\end{align*}
The local splines around $C$ look the same.
The local splines around other vertices involve the edges $EB$ and $FD$,
and we would need degree 6 splines.

\appendix

\section{Algorithms for the basis construction}

Our input data is the topological surface $\cM$ and the gluing data.
For each edge $\tau$ of $\cM$, we are given 
the $\mu_{\tau}$-basis of $Z_{k}$:
$$
	S_{1}^{\tau} =[A_{1}^{\tau}, B_{1}^{\tau},C_{1}^{\tau}],\ \ 
	S_{2}^{\tau} =[A_{2}^{\tau}, B_{2}^{\tau},C_{2}^{\tau}].
$$
The rational map is then described by 
$
	\ma_{\tau} = \frac{a_{\tau}}{c_{\tau}}, \mb_{\tau} = \frac{b_{\tau}}{c_{\tau}}
$
with 
$a_{\tau}=B_{1}^{\tau} C_{2}^{\tau}-B_{2}^{\tau} C_{1}^{\tau}$,
$b_{\tau}=A_{2}^{\tau} C_{1}^{\tau}-A_{1}^{\tau} C_{2}^{\tau}$, and 
$c_{\tau}=A_{1}^{\tau} B_{2}^{\tau}-A_{2}^{\tau} B_{1}^{\tau}$.

The spline basis functions $f=(f_{\sigma})$ are
represented on each face $\sigma$ by their coefficients in the Bernstein basis
of the face in degree $k$:
\[
	f_{i} = \sum_{l,m} c_{l,m}^{i} b_{l,m}^{\sigma}(u_{i},v_{i})
\]
Let $e_{\sigma}(k)=k^{2}$ if $\sigma$ is a rectangular face and $e_{\sigma}(k)=k\, (k-1)$ if $\sigma$ is a triangular face.

\subsection{Vertex basis functions}
Let $\gamma$ be a vertex of $\cM$ shared by the faces
$\sigma_{1},\ldots, \sigma_{F} $ and such that $\sigma_{i}$ and $\sigma_{i+1}$ share the edge
$\tau_{i+1}$.
We compute the Bernstein coefficients
$\cb^{i}=[c_{0,0}^{i}, c_{1,0}^{i}, c_{0,1}^{i}, c_{1,1}^{i},\ldots]$
of the basis functions attached to a vertex $\sigma_{i}$, using the
equations \eqref{eq:recg1.1}, \eqref{eq:recg1.2} and the relation between the Bernstein
coefficients and the Taylor coefficients of the function at $(0,0)$, see \eqref{eq:TaylorBernsteinTriang}, \eqref{eq:TaylorBernsteinQuad}.

If $c_{0,0}^{i}$ corresponds to the point $\gamma$, with 
coordinates $(0,0)$ in the face $\sigma_{i}$
and $f_{\sigma_{i}} = p + q_{i} u_{i} + q_{i+1} v_{i} +
s_{i} u_{i}v_{i}+ \cdots$ are $\cb^{i}$, we use the relations
$p=c_{0,0}^{i},
q_{i}=k\, c_{1,0}^{i}, \;
q_{i+1}=k\, c_{0,1}^{i},\;
s_{i}=e_{k}\,(c_{1,1}^{i}-c_{1,0}^{i}- c_{0,1}^{i}).$

%\smallskip
\ \\
\begin{algorithm}[H] 
	{\textsc{Basis function for the value at vertex $\gamma$}}
 
 	\bigskip

 	\For{i in [1,F]}{let $c_{0,0}^{i}:=1,c_{1,0}^{i}:=1,c_{0,1}^{i}:=1,c_{0,1}^{i}:=1$ and
   $c_{l,m}^{i}:=0$
   for $(l,m)\not\in\{ (0,0),(1,0),(0,1),(1,1)\}$\; }
\end{algorithm}

%\bigskip
 
\begin{algorithm}[H]  
	\textsc{Basis functions for the derivatives at vertex $\gamma$}
 
 	\bigskip
 
 	\For{$[c_{0,0}^{1},c_{0,1}^{1}]$ in $\{[1,0], [0,1]\}$} {
 	\For{i in [2,F]}{
	\[ 
	\left[
	\begin{array}{c}
	c_{1,0}^{i}\\ 
	c_{0,1}^{i}
	\end{array}
	\right]
	= \left(
	\begin{array}{cc}
	0 & 1 \\
	\mb_{\tau_{i}}(0) & \ma_{\tau_{i}}(0) \\
	\end{array}
	\right)
	\left[
	\begin{array}{c}
	c_{1,0}^{i-1}\\ 
	c_{0,1}^{i-1}
	\end{array}
	\right]\ 
	\]}
	\If{all edges $\tau_{i}$ are crossing edges at $\gamma$}
	{let $c_{1,1}^{1}:=0$\;}
	\For{i in [2,F]}{
 	\eIf{$\tau_{i}$ is a crossing edge at $\gamma$}{
	$\begin{array}{rcl}
 		c_{1,1}^{i} &= & c_{1,0}^{i}+c_{0,1}^{i} \\
		&&+ \frac{1}{e_{\sigma_{i}}(k)}\left(e_{\sigma_{i-1}}(k) \mb_{\tau_{i}}(0)
		\, (c_{1,1}^{i-1}-c_{1,0}^{i-1}-c_{0,1}^{i-1}) \right.\\&&\left.+ k
		\, \ma_{\tau_{i}}'(0)\,
		c_{0,1}^{i-1} + k\, \mb_{\tau_{i}}'(0)\, c_{1,0}^{i-1} \right)
	\end{array}$
 	}{$c_{1,1}^{i}:= 0$\;}
	}
	\For{i in [1,F']}{lift($\cb_{i-1},\cb_{i}$, $\tau_{i}$)}
	}
\end{algorithm}

%\bigskip

\begin{algorithm}[H] 
	\textsc{Basis functions for the cross derivatives around $\gamma$}

%	\bigskip

	\For{i in [1,F]}{let $c_{0,0}^{i}:=0, c_{1,0}^{i}:=0,
 	c_{0,1}^{i}:=0$\;}
	\eIf{all edges $\tau_{i}$ are crossing edges at $\gamma$}
	{ let $c_{1,1}^{1}:=1$\;
	\For{i in [2,F]}{
 	$c_{1,1}^{i} = \frac{e_{\sigma_{i-1}}}{e_{\sigma_{i}}(k)} \mb_{\tau_{i}}(0)
 	\, c_{1,1}^{i-1}$\;
 	}
	\For{i in [1,L]}{lift($\cb_{i},\cb_{i-1}$, $\tau_{i}$)\;}
	}
	{
	\For{j in [1,F] such that $\tau_{j}$ is not a crossing edge at
 	$\gamma$}
	{
  let $c_{1,1}^{j}=1$ and $c_{1,1}^{l}=0$ for $l\neq j$\;
  \For{i in [1,F']}{
    \If{$\tau_{i}$ is a crossing edge}{$c_{1,1}^{i} = \frac{e_{\sigma_{i-1}}}			{e_{\sigma_{i}}(k)} \mb_{\tau_{i}}(0)\,  c_{1,1}^{i-1}$}
  }
 \For{i in [1,L]}{lift($\cb_{i},\cb_{i-1}$, $\tau_{i}$)\;}
	}
	}
\end{algorithm}

%\bigskip

\noindent The function lift($\cb_{i},\cb_{i-1}$, $\tau_{i}$) used in the
algorithm consists in computing the coefficient of a spline function
with support along the edge $\tau_{i}$,  
from its first Taylor coefficients on the
faces $\sigma_{i-1}$, $\sigma_{i}$.

%\bigskip

\begin{algorithm}[H]  
	\textsc{lift($\cb_{i},\cb_{i-1}$, $\tau_{i}$)}

%	\bigskip

	\For{i in [1,F]}{
	solve the systems:
	\[ 
	\left[
	\begin{array}{c}
		k\, c_{1,0}^{i}\\ 
		k\, c_{1,0}^{i-1}
	\end{array}
	\right]
	=
	\left[
	\begin{array}{cc}
		A_{1}(0) & A_{2}(0)\\
 		B_{1}(0) & B_{2}(0)
	\end{array}
	\right]
	\left[
	\begin{array}{c}
 		p_{0}^{i}\\ 
 		q_{0}^{i}
	\end{array}
 	\right]
	\]
	and
	\begin{eqnarray*}
	\lefteqn{
	\hspace{-2cm} \left[
	\begin{array}{c}
		e_{\sigma_{i-1}}(k)\, (c_{1,1}^{i-1} -c_{1,0}^{i-1} -c_{0,1}^{i-1}) \\ 
		-e_{\sigma_{i}}(k) \, (c_{1,1}^{i} -c_{1,0}^{i} -c_{0,1}^{i})
	\end{array}
	\right]
	-
	\left[
	\begin{array}{cc}
 		B'_{1}(0) & B'_{2}(0)\\ 
  	C'_{1}(0) & C'_{2}(0) 
	\end{array}
	\right]
	\left[
	\begin{array}{c}
		 p_{0}^{i}\\ 
 		q_{0}^{i}
	\end{array}
 	\right]}\\ 
	&=&
	\left[
	\begin{array}{cc}
 		B_{1}(0) & B_{2}(0)\\ 
 		C_{1}(0) & C_{2}(0) 
	\end{array}
	\right]
	\left[
	\begin{array}{c}
 		p_{1}^{i}\\ 
 		q_{1}^{i}
	\end{array}
	\right].
	\end{eqnarray*}
	compute ${P}^{i}:=p_{0}^{i} (1-3 u_{i}^{2}+2u_{i}^{3})+
	p_{1}^{i} (u-2 	u_{i}^{2}+u_{i}^{2})$,
	${Q}^{i}:=q_{0}^{i} (1-3 u_{i}^{2}+2u_{i}^{3})+q_{1}^{i} (u-2 u_{i}^{2}+u_{i}^{2})$\;
	compute the image $(g_{i},\tilde{g}_{i})$ of ${P}^{i} S_{1}^{i} + {Q}^{i} S_{2}^{i}$
	by $\Theta_{\tau_{i}}$ and update the coefficients of
	$\cb_{i-1}, \cb_{i}$\;
	}
\end{algorithm}

\bigskip

\noindent As $A_{1}(0) B_{2}(0)-A_{2}(0) B_{1}(0)=c(0)\neq 0$, the first system
has a unique solution.
When $B_{1}(0) C_{2}(0)-B_{2}(0) C_{1}(0)=a(0)\neq 0$ (i.e. when
$\tau_{i}$ is not a crossing edge at $\gamma$), the second system
has a unique solution.
When $a(0)=0$ (i.e. when $\tau_{i}$ is a crossing edge at $\gamma$), the second system is
degenerate, but it still has a (least square) solution.

The polynomials ${P}^{i}$ (resp. $ {Q}^{i}$) are constructed so that
$P^{i}(0)=p_{0}^{i}, P^{i'}(0)=p_{1}^{i}$, $P^{i}(1)=0, P^{i'}(1)=0$
(resp. $Q^{i}(0)=q_{0}^{i}, Q^{i'}(0)=q_{1}^{i}$, $Q^{i}(1)=0, Q^{i'}(1)=0$).

By construction, the Taylor expansions of their image by $\Theta_{\tau_{i}}$ vanish at
$\gamma'$ and coincide with
$[c_{0,0}^{i-1}, c_{1,0}^{i-1},c_{0,1}^{i-1}, e_{\sigma_{i-1}}(k)
(c_{1,1}^{i-1}-c_{1,0}^{i-1}-c_{0,1}^{i-1} ) ]$,
$[c_{0,0}^{i}, c_{1,0}^{i},c_{0,1}^{i}$, $e_{\sigma_{i}}(k) (c_{1,1}^{i}-c_{1,0}^{i}-c_{0,1}^{i} ) ]$
at $\gamma$ respectively on $\sigma_{i-1}$ and $\sigma_{i}$.

\subsection{Edge basis functions}\ \\

\begin{algorithm}[H] 
	\textsc{Basis functions for the edge $\tau$}
 
	\bigskip 
 
	\KwIn{ $[A^{\tau}_{1},B^{\tau}_{1},C^{\tau}_{1}]$,
 	$[A^{\tau}_{2},B^{\tau}_{2},C^{\tau}_{2}]$ the $\mu$-basis of the
 	syzygy module $Z(\tau)$\;}
 
 	\If{$\cross_{\tau}(\gamma) = 1$ }{
	compute the image by $\Theta_{\tau}$ of $u (1-u)^{2} \left(C^{\tau}_{2}(0)\,
 	[A^{\tau}_{1},B^{\tau}_{1},C^{\tau}_{1}] - C^{\tau}_{1}(0)
 	\, [A^{\tau}_{2},B^{\tau}_{2},C^{\tau}_{2}]\right)$
	}

 	\If{$\cross_{\tau}(\gamma') = 1$}{
	compute the image by $\Theta_{\tau}$ of $u^{2} (1-u) \left(C^{\tau}_{2}(1)\,
 	[A^{\tau}_{1},B^{\tau}_{1},C^{\tau}_{1}] - C^{\tau}_{1}(1)
 	\, [A^{\tau}_{2},B^{\tau}_{2},C^{\tau}_{2}]\right)$
 	}
 
 	Let $\Delta= u^{2}(1-u)^{2}$\;
 	\For{i in [0,$k-\mu-m-4$]}{compute the image by $\Theta_{\tau}$ of $u^{i}\Delta 
 	\, [A^{\tau}_{1},B^{\tau}_{1},C^{\tau}_{1}]$.}
 	\For{i in [0,$k-\nu-m-4$]}{compute the image by $\Theta_{\tau}$ of $u^{i}\Delta 
 	\, [A^{\tau}_{2},B^{\tau}_{2},C^{\tau}_{2}]$.}
\end{algorithm}

\subsection{Face basis functions}\ \\

\begin{algorithm}[H] 
	\textsc{Basis functions for the face $\sigma$}
 
	\bigskip 
 
 	\For{$2 \le i \le k-2$, $2\le j \le k-2$ (and $i+j\le k-2$ if $\sigma$
 	is a triangle)}{let $c_{i,j}:=1$ and $c_{i',j'}=0$ for $i'\neq i$
 	or $j'\neq j$.}
\end{algorithm}

%% The Appendices part is started with the command \appendix;
%% appendix sections are then done as normal sections
%% \appendix

%% \section{}
%% \label{}

%% If you have bibdatabase file and want bibtex to generate the
%% bibitems, please use
%%
%%  \bibliographystyle{elsarticle-num} 
%%  \bibliography{<your bibdatabase>}

%% else use the following coding to input the bibitems directly in the
%% TeX file.

%% \bibitem{label}
%% Text of bibliographic item

\end{document}